\documentclass[11pt]{amsart}
\usepackage{verbatim}
\usepackage{latexsym}
\usepackage{amsmath,amsthm,amssymb, amscd,color,url,mathtools}
\usepackage{mathrsfs}
\usepackage[all]{xy}
\usepackage[all]{xy}
\usepackage{tikz}
\usepackage{tikz-cd}
\usepackage{upgreek}
\usetikzlibrary{decorations.pathmorphing}
\usepackage{anysize}
\usepackage{enumerate}
\marginsize{2.5cm}{2.5cm}{2.5cm}{2.5cm}

\usepackage[colorlinks=true,
            citecolor={red!60!black},
            linkcolor=blue,
            urlcolor=blue]{hyperref}

\input xy \xyoption{all}
\usepackage{blkarray}  
\usepackage{framed}
\usepackage[utf8]{inputenc}

\usepackage{soul}

\theoremstyle{definition}
\newtheorem{theorem}{Theorem}[section]
\newtheorem{proposition}[theorem]{Proposition}
\newtheorem{lemma}[theorem]{Lemma}
\newtheorem{corollary}[theorem]{Corollary}
\newtheorem{definition}[theorem]{Definition}

\newtheorem{remark}[theorem]{Remark}

\newtheorem{example}[theorem]{Example}

\newtheorem{subsec}[theorem]{}

\newtheorem*{thma}{Theorem A}
\newtheorem*{thmb}{Theorem B}
\newtheorem*{thmc}{Theorem C}

\newenvironment{myeq}[1][]
{\stepcounter{theorem}\begin{equation}\tag{\thetheorem}{#1}}
{\end{equation}}

\newenvironment{mysubsection}[2][]
{\begin{subsec}\begin{upshape}\begin{bfseries}{#2.}
			\end{bfseries}{#1}}
		{\end{upshape}\end{subsec}}

\newcommand{\RR}{\mathbb{R}}

\newcommand{\NN}{\mathbb{N}}

\newcommand{\cL}{\mathcal{L}}
\newcommand{\Rn}{\mathbb{R}^n}

\newcommand{\K}{\mathcal{K}}
\newcommand{\Top}{{\textbf {Top} }}
\newcommand{\GTop}{{\textbf {$G$-Top} }}
\newcommand{\GTR}{{\textbf {$G$-Top$^\mathbb{R}$}}}
\newcommand{\GTI}{{\textbf {$G$-Top$^\textbf{I}$}}}
\newcommand{\TI}{{\textbf {Top$^\textbf{I}$}}}

\newcommand{\I}{{\textbf {{I}}}}
\newcommand{\GTIC}{\GTI_{crit}}
\newcommand{\Q}{{\mathbf{Q}}}
\newcommand{\sSet}{{\textbf{sSet}}}
\newcommand{\GsS}{{\textbf{$G$-sSet}}}
\newcommand{\GSR}{{\text{$G$-$\mathbb{S}^{\RR}$}}}
\newcommand{\colim}{{\mbox{colim}}}
\newcommand{\hocolim}{{\mbox{hocolim}}}
\newcommand{\fcolim}{{\mbox{\underline{colim}}_G}}
\newcommand{\Nrv}{{\textbf{Nrv}_G}}
\newcommand{\nrv}{{\textbf{Nrv}}}

\newcommand{\Orb}{{\mbox{Orb}}}
\newcommand{\Ho}{\operatorname{Ho}}
\newcommand{\GH}{{\text{GH}}}
\newcommand{\HI}{{\text{HI}}}
\newcommand{\HC}{{\text{HC}}}
\newcommand{\IHC}{{\text{IHC}}}

\newcommand{\C}{\operatorname{\mathcal{C}}}
\newcommand{\VR}{\operatorname{VR}}
\newcommand{\sd}{\operatorname{sd}}

\makeatletter \def\blfootnote{\xdef\@thefnmark{}\@footnotetext}\makeatother

\numberwithin{equation}{section}

\numberwithin{equation}{section}

\begin{document}

\title[$G$-homotopy interleaving and $G$-persistent Whitehead theorem]{Equivariant homotopy interleaving\\ and persistent Whitehead theorem}

\author[K. Halder]{Kushal Halder}
\address{Department of Mathematics and Statistics, Indian Institute of Technology Kanpur, India}
\email{halderkushalr24@gmail.com}

\author[S. Sau]{Subhankar Sau}
\address{Department of Mathematics and Statistics, Indian Institute of Technology Kanpur, India}
\email{subhankarsau18@gmail.com}

\author[D.Sen]{Debasis Sen}
\address{Department of Mathematics and Statistics, Indian Institute of Technology Kanpur, India}
\email{debasis@iitk.ac.in}

\subjclass[]{Primary: 55P91,55N31, Secondary: 62R40,18N40,18N50,55U35}

\keywords{$G$-spaces, $G$-simplicial Sets, Diagram Category, Homotopy Category, Filtration, Homotopy Interleaving, Nerve Lemma, Whitehead Theorem}

\date{\today}
\dedicatory{}

\abstract 
    Let $G$ be a finite group. In this article, we develop some results of equivariant persistent homotopy theory for persistent $G$-spaces. We introduce the notions of $G$-stable and $G$-homotopy invariant distances and define the $G$-homotopy interleaving distance, an equivariant analogue $d^G_{HI}$ of the homotopy interleaving distance of Blumberg and Lesnick. We prove that this distance $d^G_{HI}$ is $G$-stable and $G$-homotopy invariant and establish its universality by showing that it dominates any such distance. We further prove a version of equivariant persistent Whitehead theorem relating interleavings of equivariant persistent homotopy groups to $G$-homotopy interleavings of persistent $G$-spaces. We also prove an equivariant persistent nerve lemma for equivariant good covers of persistent $G$-spaces. As a consequence, we obtain an equivariant weak law of large numbers for filtrations, providing a homotopy-theoretic consistency result for random filtrations equipped with finite group of symmetries. 
\endabstract

\maketitle


\section{Introduction}

Persistent homology is an important tool in topological data analysis (TDA), providing computable invariants that capture the topological features of data, via filtered spaces. Edelsbrunner--Letscher--Zomorodian introduced topological persistence in  \cite{ELZ02}. Since then persistent homology has been widely used in areas such as geometry, data analysis, and machine learning. The stability theorem of Cohen-Steiner--Edelsbrunner--Harer \cite{CEH07}, together with the subsequent development of the notion of interleaving (see \cite{CSGO16, Lesnick15, BL23, LS23}), has provided a systematic way to compare filtrations and persistence modules. The interleaving distance allows us to measure how close two such filtrations are. This gives a way to study how topological features appear and persist at different scales.

In TDA, filtrations can be viewed as diagrams indexed by the real line or, more generally, by a partially ordered set. Applying the homology functor to a filtered diagram of topological spaces gives an algebraic way to study the topological features of the spaces arising from data, such as connected components, holes, and voids. While persistent homology captures important geometric information, it does not retain the complete homotopy type of a filtration. This limitation has motivated the development of persistent homotopy theory, where filtrations are studied directly in suitable homotopy categories. A major step in this direction was taken by Jardine\cite{Jar20} and Blumberg--Lesnick\cite{BL23} in their work.

Many spaces arising in geometry, topology, and applications naturally possess symmetries arising from rotations, reflections, permutations, or actions of groups. Examples include molecular structures, crystalline materials, image analysis, and symmetric point clouds (see \cite{Carlsson09,CIdeSZ08,MKKWP19}). Such symmetries naturally lead to filtrations of $G$-spaces rather than ordinary topological spaces. 
From a mathematical point of view, equivariant homotopy theory is useful for studying spaces with group actions while keeping track of the information coming from the fixed-point spaces of all subgroups of $G$ (see \cite{May96,LMSM86}). This viewpoint has also started to appear in topological data analysis, with recent work on $G$-invariant persistent homology, persistent equivariant cohomology, and group-equivariant non-expansive operators (see \cite{Forsini15,BFGQ19,ALMSS24}). However, despite the recent progress in persistent homotopy theory, its equivariant version is still largely unexplored.

Classical persistent homology is generally insensitive to prescribed group actions and may fail to distinguish datasets that differ only through their symmetry structure. 
Equivariant version of persistent homology was first introduced by Frosini \cite{Forsini15}. Recently in \cite{ALMSS24}, the authors studied the persistent equivariant Borel cohomology and calculated an example for circle.  
Another recent development is the study of Group Equivariant Non-Expansive Operators (GENEOs), which shows how considering group symmetries into topological data analysis can improve both our theoretical understanding and practical approaches to data analysis and machine learning \cite{BFGQ19}. However, despite these developments, a homotopy-theoretic approach to persistence that systematically takes group actions into account is still missing. The aim of this work is to address this gap.

The purpose of this paper is to develop some results of equivariant persistent homotopy theory generalizing the existing non-equivariant versions from \cite{BL23,LS23}. Throughout the paper, $G$ will denote a finite group. The equivariant persistent objects are regarded as $\I$-diagrams of $G$-spaces (or $G$-simplicial sets) equipped with the projective model structure. Here, $\I$ is a small category. When $\I = \RR$ we denote the \emph{persistent G-spaces} (indexed by $\RR$) by $\GTR$. Working in this setting allows us to combine techniques from equivariant homotopy theory with the categorical viewpoint underlying persistent topology.

Recall from \cite{CCGGO09} that for $\delta \geq 0$ two $\RR$-diagrams $X,Y \colon \RR \to \Top$ are called $\delta$-\emph{interleaved} if there exist natural transformations $\phi$ and $\psi$ satisfying the following two diagrams
    $$\begin{tikzcd}[column sep=1.5em]
        X_{\bullet} \arrow{rr}{\text{structure map}} \arrow[swap]{dr}{\phi_{\bullet}}  && X_{\bullet+2\delta} && Y_{\bullet} \arrow{rr}{\text{structure map}} \arrow[swap]{dr}{\psi_{\bullet}}  && Y_{\bullet+2\delta}\\
         & Y_{\bullet+\delta} \arrow[swap]{ur}{\psi_{\bullet+\delta}} &&&& X_{\bullet+\delta} \arrow[swap]{ur}{\phi_{\bullet+\delta}}
    \end{tikzcd}$$
commute. The \emph{interleaving distance} between $X$ and $Y$ is defined by 
    $$d_I(X,Y) \coloneq \inf \{ \delta \geq 0 \mid X \text{ and } Y \text{ are } \delta\text{-interleaved}\} \cup \{\infty\}.$$
The interleaving distance is an extended pseudometric which is not invariant under weak homotopy equivalence. 
For example, let $X,Y\colon \mathbb{R}\to\Top$ be the constant persistence spaces with $X(r)=[0,1]$ and $Y(r)=*$ for all $r\in\mathbb{R}$. Since the interval $[0,1]$ is contractible, the projection $[0,1]\to *$ is a weak homotopy equivalence, so $X$ and $Y$ are objectwise weakly equivalent. However, $X$ and $Y$ are not $0$-interleaved, as this would require $[0,1]$ and $*$ to be homeomorphic. Thus, the ordinary interleaving distance depends on the chosen model rather than the homotopy type. 
In \cite{BL23}, the authors introduced \emph{homotopy interleaving distance} on persistent spaces to overcome this limitation. Moreover, they proved that it is universal among all stable and homotopy invariant distances \cite[Definition 1.7]{BL23}. Their work established homotopy interleavings as the natural homotopical analogue of the classical interleaving distance and laid the foundation for studying persistent spaces through the methods of model categories and homotopy theory. 
In our work, we define the notions of \emph{$G$-stable} and \emph{$G$-homotopy invariant distance} (see Definition \ref{Def_stable and homotopy inv}). We introduce the \emph{$G$-homotopy interleaving distance} on the category $\GTR$ (see Definition \ref{Def_homotopy interleaving}) and prove the following. 

\begin{thma}[Proposition \ref{Prop_GH stable}, Theorem \ref{Th_Universality}]
    The $G$-homotopy interleaving distance $d^G_{\HI}$ is $G$-stable and $G$-homotopy invariant. Moreover, if $d^G$ is any $G$-stable and $G$-homotopy invariant distance, then $d^G$ is bounded above by $d^G_{\HI}$.
\end{thma}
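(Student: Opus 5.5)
I will follow the strategy of Blumberg--Lesnick \cite{BL23}, carried over to the projective model structure on $\GTR$. There, weak equivalences are objectwise $G$-weak equivalences, meaning maps inducing weak equivalences on $H$-fixed points for every subgroup $H\le G$. I expect the definitions to read as follows. Two persistent $G$-spaces $X,Y$ are \emph{$\delta$-$G$-homotopy interleaved} if there are $X'\simeq X$ and $Y'\simeq Y$, connected by zigzags of objectwise $G$-weak equivalences, such that $X'$ and $Y'$ are $\delta$-interleaved by $G$-equivariant natural transformations. Then $d^G_{\HI}(X,Y)$ is the infimum of such $\delta$. A distance $d^G$ is \emph{$G$-stable} if $d^G(X,Y)\le \delta$ whenever $X$ and $Y$ are $\delta$-interleaved in $\GTR$; the typical form of this requirement is $d^G(F^\gamma,F^\kappa)\le\|\gamma-\kappa\|_\infty$ for sublevel filtrations of $G$-invariant functions $\gamma,\kappa\colon T\to\RR$ on a $G$-space $T$. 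It is \emph{$G$-homotopy invariant} if it is unchanged under objectwise $G$-weak equivalence.

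\emph{Step 1 (properties of $d^G_{\HI}$).} Homotopy invariance is immediate: a $G$-weak equivalence $X\to X''$ can be concatenated with the zigzags in the definition. For stability, the identity zigzags show that an honest $\delta$-interleaving is a $\delta$-$G$-homotopy interleaving. In the sublevel-filtration form, $F^\gamma_r\subseteq F^\kappa_{r+\delta}$ with $\delta=\|\gamma-\kappa\|_\infty$, and these inclusions are $G$-equivariant because $\gamma$ and $\kappa$ are $G$-invariant. I also need the triangle inequality, so that $d^G_{\HI}$ is an extended pseudometric. This needs a lemma: a $\delta$-interleaving can be transported along $G$-weak equivalences in the homotopy category $\Ho(\GTR)$. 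Equivalently, $X$ and $Y$ are $\delta$-$G$-homotopy interleaved iff there are cofibrant replacements $QX, QY$ and maps $QX\to Y(\delta)$, $QY\to X(\delta)$ in $\Ho(\GTR)$ whose composites are the shift morphisms. I would prove this lemma by lifting homotopy classes of maps out of projectively cofibrant diagrams to strict maps, then rectifying the homotopy-commutative triangles. This uses the mapping-cylinder factorization in $\GTR$ and the fact that shifts preserve cofibrations and weak equivalences.

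\emph{Step 2 (universality).} Let $d^G$ be $G$-stable and $G$-homotopy invariant, and suppose $X$ and $Y$ are $\delta$-$G$-homotopy interleaved. By homotopy invariance, $d^G(X,Y)=d^G(X',Y')$. By Step 1's lemma we may assume $X'$ and $Y'$ are cofibrant and strictly $\delta$-interleaved via maps $\phi,\psi$. The task is then to realize $X'$ and $Y'$, up to $G$-weak equivalence, as sublevel filtrations $F^\gamma$ and $F^\kappa$ of a single $G$-space $T$ with $\|\gamma-\kappa\|_\infty\le\delta$. Following \cite{BL23}, I take $T$ to be a $G$-equivariant "interleaving mapping telescope": glue the mapping cylinders of all $\phi_r\colon X'_r\to Y'_{r+\delta}$ and $\psi_r\colon Y'_r\to X'_{r+\delta}$ along the structure maps. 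On this space, $\gamma$ and $\kappa$ are defined by the filtration indices, with the cylinder coordinate interpolating so that the two functions differ by at most $\delta$. Both functions are $G$-invariant because the action is applied levelwise.

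The main obstacle is verifying that $F^\gamma\simeq X'$ and $F^\kappa\simeq Y'$ as persistent $G$-spaces. This comes down to showing that the inclusion of $X'_r$ into the sublevel set is a $G$-homotopy equivalence for each $r$, naturally in $r$. I would first prove the statement nonequivariantly on each fixed-point space $T^H$, using that $(\text{cyl}\,f)^H=\text{cyl}(f^H)$ and that fixed points commute with the relevant colimits. Cofibrancy is what makes these colimits homotopy colimits; for $G$-CW diagrams, $H$-fixed points commute with pushouts along cofibrations and with sequential colimits. This reduces the fixed-point verification exactly to the Blumberg--Lesnick argument. Once it is done, stability gives $d^G(X,Y)\le\delta$, and taking the infimum over $\delta$ yields $d^G\le d^G_{\HI}$.
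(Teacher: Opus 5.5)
There is a genuine gap in Step 2, at exactly the point you flag as the main obstacle, and the appeal to ``the Blumberg--Lesnick argument'' does not close it.

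Gluing mapping cylinders of the $\phi_r$ and $\psi_r$ along the structure maps does not give sublevel sets equivalent to $X'_r$. Nothing in that space witnesses the relations $\psi_{r+\delta}\circ\phi_r=X'_{r,r+2\delta}$ and $\phi_{r+\delta}\circ\psi_r=Y'_{r,r+2\delta}$, so each relation closes up into a spurious loop. Take $X'=Y'=0\odot *$ with the unique interleaving maps. Gluing along the colimits, the cylinders of $\phi$ and $\psi$ join the point $\colim X'$ to the point $\colim Y'$ and back, so $T\cong S^1$. Since $\gamma$ is bounded, large sublevel sets are all of $T$, whereas $X'_r=*$. What you need is a homotopy colimit of the whole interleaving functor $Z\colon\mathcal{I}^\delta\to\GTop$, including all higher cells, not just of $\phi$ and $\psi$.

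Nor is there a telescope argument in \cite{BL23} to reduce to. Blumberg--Lesnick, and the paper in Proposition~\ref{Stability of d_{GHI}}, proceed as follows:
\begin{itemize}
\item cofibrantly replace $Z$ itself in the projective structure on $\GTop^{\mathcal{I}^\delta}$ and set $T=\colim\,\Q Z$;
\item use that cofibrant diagrams over the directed poset $\mathcal{I}^\delta$ are $1$-critical (Proposition~\ref{Prop_1 critical});
\item take the birth-index functions $\gamma^X=\zeta^{\Q Z\circ E^0}$ and $\gamma^Y=\zeta^{\Q Z\circ E^1}$.
\end{itemize}
Proposition~\ref{Prop_Id} then gives $\mathscr{S}(\gamma^X)\cong\Q Z\circ E^0\simeq_G X'$ on the nose. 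The bound $|\gamma^X-\gamma^Y|\le\delta$ holds because a point with minimal index $(r,j)$ has birth indices $r$ and $r+\delta$. No fixed-point analysis is needed, since forgetting $G$ preserves projective cofibrancy and colimits.

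Your idea can be rescued with the Bousfield--Kan homotopy colimit of $Z$. Set $h^0(s,0)=s$ and $h^0(s,1)=s+\delta$, and let $\gamma^X$ be $h^0$ of the top vertex of the chain carrying a point; define $\gamma^Y$ via $h^1$ symmetrically. The sublevel sets are then homotopy colimits over the down-sets $\{(s,0)\colon s\le r\}\cup\{(s,1)\colon s\le r-\delta\}$. These have terminal object $(r,0)$, so the sublevel sets are $G$-equivalent to $X'_r$, and $|h^0-h^1|\equiv\delta$.

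Two smaller points. First, the cofibrancy that matters is that of $Z$, not of $X'$ and $Y'$ separately. Replacing $Z$ by $\Q Z$ already yields cofibrant, strictly $\delta$-interleaved $\Q Z\circ E^0\simeq_G X'$ and $\Q Z\circ E^1\simeq_G Y'$ with a common colimit, so your Step~1 lemma is not needed in Step~2.

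Second, that lemma is doubtful as stated. It identifies $\delta$-interleavings in $\Ho(\GTR)$ with $(G,\delta)$-homotopy interleavings, which is stronger than what is known: Lanari--Scoccola rectify interleavings in the homotopy category only up to a multiplicative constant. Your sketch also strictifies the two homotopy-commutative triangles separately, ignoring that the homotopies must be made compatible. For the triangle inequality, it suffices to transport strict interleavings along weak equivalences using the projective structure on diagrams over $\mathcal{I}^\delta$, as in \cite[Section 4]{BL23}, or to cite \cite[Proposition 2.3]{LS23} as the paper does.

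Your treatment of $G$-stability and $G$-homotopy invariance of $d^G_{\HI}$ matches the paper's.
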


The homotopy interleaving distance introduced by Blumberg and Lesnick provides a homotopy-theoretic notion of `\emph{approximate weak equivalence}' for filtrations. This motivates the investigation of other fundamental results from classical homotopy theory admit persistent analogues. So they conjectured a persistent analogue of the classical Whitehead theorem \cite[Conjecture 8.7]{BL23}. 

A direct approach to this conjecture using the homotopy interleaving distance is technically challenging, since homotopy interleavings are defined through homotopy coherent diagrams rather than morphisms in the homotopy category. To overcome this difficulty, Lanari and Scoccola \cite{LS23} considered the interleaving distance in the homotopy category, which admits a more tractable categorical description. They established a persistent Whitehead theorem relating interleavings of persistent homotopy groups to interleavings in the homotopy category. Motivated by these works, we define $G$-interleaving distance in homotopy category (see Definition \ref{Defn_GIHC}) to investigate the equivariant setting and establish the following equivariant analogue of persistent Whitehead theorem.

\begin{thmb}[Theorem \ref{Th_interleaving in homotopy category}]
    Let $X, Y \in G$-$\mathbb{S}^{\RR}$ be persistent spaces that are assumed to be projective cofibrant and $d$-skeletal if $G$-$\mathbb{S}=\GsS$, or persistent $G$-$CW$-complexes of dimension at most $d$ if $G$-$\mathbb{S}=\GTop$. 
    Let $\delta \geq 0$ be a real number. If there exists a morphism in the homotopy category $f \colon X \to Y(\delta) \in \mbox{Ho}(G$-$\mathbb{S}^{\RR})$ that induces $\delta$-interleavings in all homotopy groups, then $X$ and $Y$ are $(4(d+1)\delta)$-interleaved in the homotopy category. 
\end{thmb}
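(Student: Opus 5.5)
The plan is to follow the strategy of Lanari--Scoccola for the non-equivariant persistent Whitehead theorem, working fixed-point-wise and using Elmendorf-type reasoning to pass between $G$-diagrams and diagrams of fixed-point spaces.

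\textbf{Step 1: reduce to a pointwise statement.} The hypothesis that $f$ induces $\delta$-interleavings on all homotopy groups is understood equivariantly: for every subgroup $H\le G$, every $r$, and every basepoint, the induced map $\pi_n(X^H_r)\to\pi_n(Y^H_{r+\delta})$ is part of a $\delta$-interleaving of persistent groups (or sets, for $n=0$). By cofibrancy, namely $X$ projective cofibrant (respectively, a persistent $G$-CW complex) and $Y$ fibrant after replacement, I will represent $f$ by an honest morphism $X\to Y(\delta)$ of persistent $G$-spaces. Via the nerve and realization equivalence, I will work in $\GTop$ with $G$-CW models, where $X$ has cells $G/H\times D^k$ with $k\le d$ and these are attached in a filtered way.

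\textbf{Step 2: the key lemma.} I will prove the following statement: if a morphism $g\colon A\to B$ of persistent $G$-spaces induces, on all fixed points, maps that are $\epsilon$-monomorphisms and $\epsilon$-epimorphisms on $\pi_*$ (kernels die after shifting by $\epsilon$, and images contain everything after shifting by $\epsilon$), then for any persistent $G$-CW complex $K$ of dimension $\le d$ and any map $K\to B$, there is a lift $K\to A(c\,\epsilon)$, up to homotopy in $\operatorname{Ho}$, with $c$ linear in $d$. The proof is by induction over equivariant cells. A cell $G/H\times D^k$ corresponds, by adjunction, to a problem on $H$-fixed points of the form $D^k\to B^H$ with boundary lifted to $A^H$. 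Each extension over a cell costs roughly $2\epsilon$ of shift: one $\epsilon$ to kill the obstruction in the relative homotopy group $\pi_k(B^H,A^H)$, and one $\epsilon$ to correct the lift. Over $d+1$ dimensions this gives a total shift of $2(d+1)\epsilon$ for a single lift, and the homotopy uniqueness of the lift costs a comparable amount. Persistence (naturality in $r$) is handled because projective cofibrant diagrams are built from free diagrams $\RR(r,-)\times G/H\times D^k$, so a cell attached at index $r$ requires solving the problem only at $r$.

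\textbf{Step 3: assemble the interleaving.} Apply the lemma with $K=Y$ and the map $\mathrm{id}\colon Y\to Y$ to obtain $g\colon Y\to X(2(d+1)\delta)$ in $\operatorname{Ho}$ with $f\circ g$ equal to the shift. Then use the uniqueness part of the lemma, applied to $K=X$ and $K=Y$, to show that $g\circ f$ is homotopic to the shift morphism $X\to X(4(d+1)\delta)$, and similarly on the other side. These two compositions yield a $4(d+1)\delta$-interleaving in the homotopy category.

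\textbf{Expected obstacle.} The main obstacle is the bookkeeping in Step 2. Obstructions on $H$-fixed points must be killed compatibly for different $H$, and the relative homotopy sets in low degree ($\pi_0$, $\pi_1$ acting on $\pi_k$) make the $\epsilon$-mono and $\epsilon$-epi notions delicate. I would first try to mimic the proof of Lanari--Scoccola verbatim in each fixed-point category $\TI$. I would then use the fact that equivariant cells only interact through the fixed-point functors $(-)^H$, which, by adjunction, turn $G/H\times D^k$ attachments into ordinary disk attachments in $X^H$. This is what keeps the constant at exactly $4(d+1)$, independent of $G$.
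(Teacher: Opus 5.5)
Your plan is essentially the paper's proof, which is the Lanari--Scoccola argument transported to fixed points. You replace $f$ by a fibration $p\colon X'\to Y'(\delta)$ between fibrant objects. You then lift cell by cell using the adjunction $\hom_G(G/H\times K,L)\cong\hom(K,L^H)$ together with Jardine's shifted lifting lemma. Lifting along $\emptyset\to Y$ gives $g$ with $p\circ g$ a shift, and a second lift along the cylinder inclusion $X\sqcup X\to I\times X$ (your ``uniqueness part'') shows $g\circ p$ is homotopic to a shift.

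The one correction concerns your constants. A $\delta$-interleaving $X\to Y(\delta)$ is only a $(0,2\delta)$-interleaving, so your $\epsilon$ is $2\delta$ and each extension stage costs $4\delta$. The first lift is therefore $g\colon Y\to X((4d+3)\delta)$, not $X(2(d+1)\delta)$. Moreover, $g\circ f$ agrees with the shift only after a further $4(d+1)\delta$. Both composites are then shifts by $8(d+1)\delta$, which is exactly a $4(d+1)\delta$-interleaving. Taken literally, your stated numbers would yield a $2(d+1)\delta$-interleaving, which your own per-cell estimate does not support.
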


Let $T$ be a topological space and $\mathcal{U}=\{U_i\}_{i\in \Lambda}$ an indexed open cover of $T$. The \emph{nerve} of $\mathcal{U}$, denoted by $N(\mathcal{U})$, is the abstract simplicial complex whose vertex set is $\Lambda$, and whose simplices are the finite subsets $\{i_0,\ldots,i_k\}\subseteq \Lambda$ such that
    $$U_{i_0}\cap U_{i_1}\cap\cdots\cap U_{i_k}\neq \emptyset.$$
An open cover is called \emph{good cover} if every non-empty finite intersection of sets from $\mathcal{U}$ is contractible.
Nerve theorem is a classical result in homotopy theory which says a paracompact space $T$ is homotopy equivalent to the geometric realization of $\nrv(\mathcal{U})$ for any good cover $\mathcal{U}$ of $T$. 
A persistent version of the nerve lemma first appears in Chazal--Oudot\cite{CO08}. Afterwords, many articles explored the result, for example in \cite{BKRR23}, Bauer--Kerber--Roll--Rolle prove it along with several other versions useful for persistent setup.
On the other hand, one can find some versions of equivariant nerve lemma in \cite{HH13} and \cite{Yang14}, but Gonz\'alez--Gonz\'alenz\cite{GG24} used a more topological approach. This motivates us to explore equivariant persistent nerve lemma. 

\begin{thmc}[Theorem \ref{Th_EPNL}]
Let $\I$ be a small category and $X\in \GTI$. If $\mathcal{U}$ is an equivariant good cover of $X$, then $X$ and $\Nrv(\mathcal{U})$ are weakly equivalent in the diagram category $\GTI$.
\end{thmc}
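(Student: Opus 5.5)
We compare both $X$ and $\Nrv(\mathcal{U})$ with an intermediate diagram, the \emph{Mayer--Vietoris blowup} (a homotopy colimit over the cover), and show that each comparison map is an objectwise $G$-weak equivalence. Weak equivalences in the projective model structure on $\GTI$ are objectwise, and objectwise means: for every $i\in\I$ and every subgroup $H\le G$, the map on $H$-fixed points is a weak equivalence. So it suffices to build natural zigzags
$$X \xleftarrow{\ \simeq\ } B(\mathcal{U}) \xrightarrow{\ \simeq\ } \Nrv(\mathcal{U})$$
of $\I$-diagrams of $G$-spaces and check them fixed-point-wise at each object.

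\textbf{Construction.} For each $i\in\I$ the cover $\mathcal{U}_i=\{U_{i,\lambda}\}_{\lambda\in\Lambda_i}$ of $X_i$ carries a $G$-action on the index set $\Lambda_i$ with $gU_{i,\lambda}=U_{i,g\lambda}$. The structure maps $X_i\to X_j$ are compatible with maps of index sets $\Lambda_i\to\Lambda_j$ sending $U_{i,\lambda}$ into $U_{j,\lambda'}$, as in the definition of equivariant good cover given later.

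Set
$$B(\mathcal{U})_i=\bigcup_{\sigma\in N(\mathcal{U}_i)} U_{i,\sigma}\times|\sigma| \subseteq X_i\times|N(\mathcal{U}_i)|,$$
with the diagonal $G$-action, where $U_{i,\sigma}$ is the intersection of the sets indexed by $\sigma$. This is functorial in $i$. The two projections give natural $G$-maps
$$p\colon B(\mathcal{U})\to X, \qquad q\colon B(\mathcal{U})\to \Nrv(\mathcal{U}),$$
where $\Nrv(\mathcal{U})_i=|N(\mathcal{U}_i)|$, or its simplicial version.

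\textbf{Fixed points.} Fix $H\le G$ and $i$. Since fixed points commute with products and with the unions involved, $B(\mathcal{U})_i^H$ is the analogous blowup of $X_i^H$ with respect to the induced cover by the sets $U_{i,\sigma}^H$ for $H$-invariant simplices. These are the simplices $\sigma$ whose barycenter is $H$-fixed.

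Here it is cleaner to pass to the barycentric subdivision $\sd N(\mathcal{U}_i)$. Its $H$-fixed subcomplex is the nerve of the poset of $H$-invariant simplices $\sigma$ with $U_{i,\sigma}^H\neq\emptyset$.

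\textbf{Key hypothesis.} The equivariant good cover condition must be read, as in Gonz\'alez--Gonz\'alez \cite{GG24}, as follows: for each $H$, every nonempty $U_{i,\sigma}^H$ with $\sigma$ $H$-invariant is contractible.

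\textbf{The map $p^H$.} Its fiber over $x\in X_i^H$ is the full simplex on the nonempty set $\{\lambda : x\in U_{i,\lambda}\}$, so the fibers are contractible. A partition of unity on the paracompact space $X_i^H$, averaged over $H$, gives a section. Its composite with $p$ is the identity, and the other composite is homotopic to the identity by the straight-line homotopy in the simplex coordinate. So $p^H$ is a homotopy equivalence. Alternatively, use the standard fact that the blowup is a homotopy colimit equivalent to the union for open covers; this is the route I would take, to avoid paracompactness issues.

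\textbf{The map $q^H$.} Filter by the poset of $H$-invariant simplices. Then $q^H$ is a map of homotopy colimits over this poset induced by the maps $U_{i,\sigma}^H\to *$. Each of these is a weak equivalence by the key hypothesis, so $q^H$ is a weak equivalence by homotopy invariance of homotopy colimits.

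\textbf{Main obstacle.} The hard step is identifying $(B(\mathcal{U})_i)^H$ with a homotopy colimit over the $H$-fixed subdivided nerve. Fixed points of $|\sigma|$ under an $H$ that permutes the vertices of $\sigma$ are the barycentric sub-simplex spanned by the orbit barycenters, not $|\sigma|$ itself. I would handle this by working throughout with $\sd N(\mathcal{U}_i)$, whose simplices are chains $\sigma_0\subset\cdots\subset\sigma_k$ and on which $H$ acts simplicially with fixed points a subcomplex. The relevant diagram is then $\sigma\mapsto U_{i,\sigma}$ over the face poset, and taking $H$-fixed points commutes with the Bousfield--Kan homotopy colimit over a $G$-poset, because $G$ is finite and the construction is a coend of $G$-spaces.

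Naturality in $i$ is then immediate from the construction. The zigzag therefore gives a weak equivalence in $\GTI$, which proves the statement.
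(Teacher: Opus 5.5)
Your route is essentially the paper's. The paper also passes through $\hocolim\,\C(\mathcal{U}_t)$ over $\sd(\Nrv(\mathcal{U}_t))$, which is your blowup $B(\mathcal{U})_t$. It uses the fibre projection to $X_t$ and the nerve projection to $|\Nrv(\mathcal{U}_t)|$, and then checks naturality in $t$. The difference is how the two legs are verified. The paper cites Gonz\'alez--Gonz\'alez to get levelwise $G$-homotopy equivalences, the nerve leg via Lemma~\ref{Lem_Equiv Homotopy Lemma}. You instead check weak equivalences on each $H$-fixed set, reducing to non-equivariant homotopy invariance of homotopy colimits over the poset of $H$-invariant simplices. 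Your version gives exactly what a Bredon projective weak equivalence needs and shows why one subdivides. The paper's citation gives something stronger but hides the mechanism.

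There is one point you must fix. Your ``key hypothesis'' is weaker than the theorem's, and with it the argument and the statement both fail. $G_\sigma$-contractibility of $U_\sigma$ means $U_\sigma^H$ is \emph{nonempty} and contractible for every $H\le G_\sigma$, i.e.\ for every $H$ fixing $\sigma$; the image of the point is $G_\sigma$-fixed, so it lies in every such $U_\sigma^H$. Asking only that the nonempty $U^H_\sigma$ be contractible is not enough. For example, let $G=\mathbb{Z}/2$ act antipodally on $X=S^\infty$ and take the one-set cover $\{X\}$. Every nonempty fixed set is contractible, yet $X^G=\emptyset$ while $|N|^G$ is a point.

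Correspondingly, your description of the $H$-fixed subcomplex of $\sd N(\mathcal{U}_i)$ is wrong. It is the nerve of the poset of \emph{all} $H$-invariant simplices of $N(\mathcal{U}_i)$, i.e.\ those with $U_{i,\sigma}\neq\emptyset$. The fibre of $q^H$ over the barycenter of such a $\sigma$ is $U^H_{i,\sigma}$, so it can be empty. The correct statement is this: $B(\mathcal{U})_i^H$ and $|N(\mathcal{U}_i)|^H$ are always the homotopy colimits, over this full poset, of $\sigma\mapsto U^H_{i,\sigma}$ and of the constant point diagram. The comparison maps $U^H_{i,\sigma}\to *$ are weak equivalences precisely because $G_\sigma$-contractibility makes each $U^H_{i,\sigma}$ nonempty and contractible. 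With that correction your $q^H$ step goes through.

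Two smaller remarks. First, fixed points do not commute with coends or colimits in general, so ``it is a coend'' is not a justification. What works is the cell description. An element $h\in H$ carries the open cell of a chain $c=(\sigma_0\subsetneq\cdots\subsetneq\sigma_k)$ in $|\sd N|$ to the open cell of $hc$. Since $h$ preserves cardinalities, it fixes that cell pointwise if and only if $h\sigma_j=\sigma_j$ for all $j$. Viewing $B$ inside $X_i\times|\sd N(\mathcal{U}_i)|$, this identifies $B^H$.

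Second, the fibre of $p^H$ over $x\in X_i^H$ is not the full simplex on $\Lambda_x=\{\lambda : x\in U_{i,\lambda}\}$. It is the $H$-fixed part of that simplex, spanned by the barycenters of the $H$-orbits in $\Lambda_x$. It is still nonempty and convex, and your $H$-averaged partition of unity lands in it, so that step is fine.
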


The weak law of large numbers says that, as the number of random samples increases, the sample statistics gets closer in probability to the statistics of the overall data. In \cite{BL23}, the authors showed that under suitable hypotheses, filtrations built from finite data converge to the filtration of the underlying space in an appropriate homotopical sense. Since many geometric objects naturally admit finite group symmetries, it is desirable to establish analogous convergence results in the equivariant setting. Such results ensure that the limiting homotopy type and its symmetries are simultaneously recovered from random samples. Our equivariant persistent nerve lemma provides the principal ingredient needed to obtain an equivariant weak law of large numbers for filtrations, see Proposition \ref{Prop_weak law}.

\vspace{.25cm}
\noindent \textbf{Organization:} In Section 2, we recall the necessary background on model categories, diagram categories, homotopy colimits, and homotopy interleavings. In Section 3, we introduce the G-homotopy interleaving distance and establish its universality. Section 4 is devoted to the equivariant persistent Whitehead theorem. In Section 5, we develop the equivariant persistent nerve theorem together with its application in proving the equivariant weak law of large numbers.


\section{Preliminaries}

This section is dedicated to recalling all the required backgrounds such as model category, homotopy category, diagram category, colimit, and homotopy colimit (\cite{Quillen67}, \cite{Hovey}, \cite{Hir03}). Since we are interested in diagrams of $G$-spaces, we revisit the projective model structure on it. We conclude this section by recollecting the ideas of interleaving categories and homotopy interleaving (see \cite{BL23}).

\begin{mysubsection}{Model category} 
Let $\C$ be a category. Recall that a \emph{model structure} on $\C$ consists of three distinguished classes of morphisms: \emph{weak equivalences} ($\simeq$), \emph{fibrations} ($\twoheadrightarrow$) and \emph{cofibrations} ($\hookrightarrow$) satisfying the four axioms: $(i)$ \emph{two out of three property} for weak equivalences, $(ii)$ morphisms are \emph{closed under retract}, $(iii)$ \emph{lifting property}, and $(iv)$ \emph{factorization} of any morphism.  We call a (co)fibration \emph{acyclic} if it is also a weak equivalence.
A \emph{model category} is a complete and cocomplete category (i.e. all small limits and colimits exist) $\mathcal{C}$ with a model structure.
\end{mysubsection}

\begin{mysubsection}{Homotopy category} 
Associated to every model category $\C$, one can construct a \emph{homotopy category} $\Ho(\C)$ as follows. The homotopy category $\Ho(\C)$ is the category having the same objects as $\C$ and with morphisms
        $$\operatorname{Hom}_{\Ho(\C)}(X,Y): =\operatorname{\pi}(RQX,RQY)$$
    where $RQX,RQY$ cofibrant ($Q$) and fibrant ($R$) replacement of $X$ and $Y$ respectively and $\operatorname{\pi}(RQX,RQY)$ is the homotopy class of maps between them, see \cite [Theorem 1.1]{Quillen67}.
This association defines a functor $\gamma \colon \C \to \Ho(\C)$ from a given model category $\C$ to its corresponding homotopy category $\Ho(\C)$. In fact,$\gamma$ is a \emph{localization} of $\C$ with respect to the class of weak equivalences $W$ (see \cite[Theorem 6.2]{DWJS95}). Hence, $\gamma$ sends weak equivalences to isomorphisms and for any functor $F\colon \C \to \mathcal{D}$ with the same property, there exists a unique functor $G\colon \Ho(\C)\to \mathcal{D}$ such that the following diagram commutes.
        $$\xymatrix{
        \C \ar[rr]^F \ar[d]_{\gamma} && \mathcal{D}\\
        \Ho(\C) \ar[rru]_G  &&
        }$$
Let $X,Y \in \operatorname{ob}(\C)$.  
We say $X$ and $Y$ are \emph{weakly-equivalent} and write $X \simeq Y$ if $X,Y\in  \operatorname{ob}(\C)$ are isomorphic in $\Ho(\C)$. Since $\gamma$ is a localization, it follows that $X \simeq Y$ if and only if there is a zigzag of weak-equivalences in $\C$ connecting $X$ and $Y$.
    $$\xymatrix{
        & W_1 \ar[rd]^{\simeq} \ar[ld]_{\simeq} && \dots \ar[rd]^{\simeq} \ar[ld]_{\simeq} && W_n \ar[rd]^{\simeq} \ar[ld]_{\simeq} &\\
        X && W_2 && W_{n-1} && Y
    }$$

A filtration is precisely a functor from a poset, viewed as a category, to a target category. Thus, persistent objects are naturally studied in the corresponding diagram category.
\end{mysubsection}

\begin{mysubsection}{Diagram Category}\label{Diagram Category} 
Let $\C$ be a category and $\I$ a small category. The \emph{diagram category} $\C^{\I}$ is the category whose objects are functors $X \colon \I \to \C$ and whose morphisms are natural transformations between such functors. For each object $t \in \I$, we denote the object $X(t)$ of the diagram $X$ by $X_t$. We may use both notions interchangeably.

Consider $X,Y\in \operatorname{ob}(\C^\I)$. Then, a natural transformation $\alpha\colon X \to Y$ is an \emph{object-wise weak-equivalence} if $\alpha_t\colon X_t \to Y_t$ is weak-equivalence for all $t \in \operatorname{ob}(\I)$. Similarly, $\alpha $ is object-wise fibration and cofibration if $\alpha_t\colon X_t \to Y_t$ is fibration and cofibration, respectively.

 By the \emph{projective model structure} is e on the diagram category $\C^{\I}$ we mean in which weak-equivalence and fibrations are object-wise. In particular, if $\C$ is a \emph{cofibrantly-generated model category}  \cite [Definition 11.1.2]{Hir03}, the diagram category $\C^\I$ is cofibrantly generated and admits a projective model structure (see \cite [Theorem 11.6.1]{Hir03}). Furthermore, let $\mathcal{I}$ and $\mathcal{J}$ denote the sets of generating cofibrations and generating trivial cofibrations, respectively, for the cofibrantly generated model structure on $\C$. Then the projective model structure of $\C^\I$ is cofibrantly generated with generating cofibrations and generating trivial cofibrations given by 
    $$\{{\I}(i,-)\odot f\colon i \in \I,f \in \mathcal{I}\} \quad \text{and} \quad \{\I(i,-)\odot g\colon i \in \I,g \in \mathcal{J}\},$$
respectively. Here ${\I}(i,-)\colon \I \to \text{Set}$ is a functor. Let, $A\in \C$ the functor ${\I}(i,-)\odot A\colon \I \to \C$ is defined by sending an object $j\mapsto \coprod_{\I(i,j)}A$ and a morphism $u\colon \alpha \to \beta \in \text{morphism}(\I)$ to a morphism:
    $$(\I(i,-)\odot A)(u) \colon \coprod_{\I(i,\alpha)}A \longrightarrow \coprod_{\I(i,\beta)}A$$
is defined uniquely by specifying where each component maps to. In particular, label a copy of $A$ in the domain, by a morphism $v\colon i\to \alpha$. Then the copy of $X$ maps to the copy of $A$ corresponding to $u\circ v$ via $\text{id}_A \colon A\to A$. In formally it satisfies the relation
     $$(\I(i,-)\odot A)(u) \circ \iota_v = \iota_{u\circ v}$$
where $\iota_v$ is the inclusion of the copy of $A$ indexed by $v$. Hence,  $(\I(i,-)\odot A)$ is a diagram and this diagram is called a \emph{free diagram on $A$ generated at $i$} (cf. \cite [Definition 11.5.25]{Hir03}). Now if $f\colon A \to B$ be a morphism in $\C$, then $(\I(i,-)\odot f)$ becomes a natural transformation between two diagrams $(\I(i,-)\odot A)$ and $(\I(i,-)\odot B)$. For each $d\in \I$ the component of the natural transformation at $d$ is:
    $$(\I(i,-)\odot f)_d= \coprod_{\I(i,d)}f\colon \coprod_{\I(i,d)}A \longrightarrow \coprod_{\I(i,d)}B$$
equivalently if $\iota^A_v \colon A \to \coprod_{\I(i,d)} A$ denotes the inclusion of the copy indexed by a morphism $v\colon i \to d$, then the component is uniquely determined by: 
    $$(\I(i,-)\odot f)_d \circ \iota^A_v = \iota^B_v \circ f$$
for every $v \in \I(i,d)$. For further details, see \cite [Section 11.6]{Hir03}. For simplicity we write  $i\odot A$ in place of $\I(i,-)\odot A$. 

We are particularly interested in the projective model structure when the indexing category is a poset $(P, \leq)$. In this case, for $r\in P$ and $A\in \C$, the functor $r\odot A$ is given by,
    $$r\odot A(t)=
        \begin{cases}
            {A \text{  if } t\geq r}\\
            \emptyset \text{ otherwise}
        \end{cases}$$
where the non-trivial structure morphisms are all given by the identity morphism $\text{id}_A$.        

\begin{remark}
Note that, two diagrams $X, Y \in \operatorname{ob}(\C^\I)$ are isomorphic in the homotopy category $\operatorname{Ho}(\C^\I)$, or they are weakly-equivalent in the projective model structure on $\C^\I$, if and only if there is a zigzag of object-wise weak-equivalences.
\end{remark}

Having introduced diagram categories in a general setting, we now specialize to diagrams of $G$-spaces. In TDA, the persistent objects coming from the data are modeled as a filtration of topological spaces. Here, instead of taking only topological spaces, we take a filtration of $G$-topological spaces indexed by a poset $P$. Our objective is to combine equivariant homotopy theory with persistence, and to this end we study an appropriate model structure on $\GTop^P$ in the following section.
\end{mysubsection}


\begin{mysubsection}{Projective model structure on $\GTop^{\I}$} 
The category $\Top$ of compactly generated weakly Hausdorff (CGWH) spaces is cofibrantly generated with respect to the Serre model structure. Its generating cofibrations are the boundary inclusions
    $$S^{n-1}\hookrightarrow D^n,\qquad n\geq 0.$$
Similarly, the category $\GTop$ of CGWH spaces equipped with a continuous $G$-action admits two standard model structures: the Borel model structure and the Bredon model structure. Throughout this paper, we assume that $G$ is a finite group.

In the \emph{Borel model structure}, a map is a weak equivalence (respectively, fibration) if and only if the underlying map of topological spaces, obtained by forgetting the $G$-action, is a weak homotopy equivalence (respectively, a Serre fibration).

In contrast, the \emph{Bredon model structure} is defined in terms of fixed-point spaces. A map $f\colon T_1 \to T_2$ is a weak equivalence if, for every subgroup $H\leq G$, the induced map
$f^H: T_1^H \to T_2^H$ is a weak homotopy equivalence. Likewise, $f$ is a fibration if, for every subgroup $H\leq G$, the induced map $f^H: T_1^H \to T_2^H$ is a Serre fibration. 

Both model structures are cofibrantly generated. The generating cofibrations and the generating trivial cofibrations for the Borel model structure are respectively:
    $$G \times S^{n-1} \to G \times D^n \quad \text{and} \quad G\times \emptyset \to G\times D^n \quad \forall n\geq 0.$$
The generating cofibrations and the generating trivial cofibrations for the Bredon model structure are respectively:
    $$G/H \times S^{n-1} \to G/H \times D^n \quad \text{and} \quad G/H\times \emptyset \to G/H\times D^n \quad \forall~ H\leq G \quad \text{and} \quad n\geq 0.$$
Henceforth, we use the Bredon model structure as a model structure on $\GTop$. The diagram category $\GTop^\I$ for a small category $\I$ admits a projective model structure, and its generating cofibrations and generating trivial fibrations are respectively:
    $$r\odot(G/H \times S^{n-1}) \to r\odot(G/H \times D^n) \quad \text{and} \quad r\odot (G/H\times \emptyset) \to r\odot(G/H\times D^n)$$
$\forall~ H\leq G$, $n\geq 0$ and $r \in \I $, see Subsection \ref{Diagram Category}.

Persistent objects are modeled as diagrams. Hence, it is often desirable to combine the objects at all filtration levels into a single space that reflects the entire diagram. The categorical construction that performs this gluing is the colimit. However, since ordinary colimits do not generally preserve homotopy-theoretic information, we also consider their homotopy-invariant analogue, the homotopy colimit, in the following section. 
\end{mysubsection}


\begin{mysubsection}{Colimit and Homotopy colimit}\label{Subsec_blowup bar and hcolim} For a given diagram of spaces \emph{colimit} provides a universal way to glue them together and give a new object in the underlying category. In particular, a colimit of the diagram $X \in \GTop^{\I}$, denoted by $\colim~X$, is a $G$-space together with a family of $G$-maps $\iota_{t} \colon X_t \to \colim~X$ such that
    \begin{enumerate}[(i)]
        \item for every morphism $\alpha \colon t \to s$ in $\I$, $\iota_s \circ X(\alpha)=\iota_t$,
        \item for every $G$-space $Y$ and every cocone $\{f_t:X(t)\to Y\}_{t \in \I}$, consisting of $G$-maps satisfying $f_s \circ X(\alpha)=f_t$ for every morphism $\alpha:t\to s$, there exists a unique $G$-equivariant map $f:\colim~X \longrightarrow Y$ such that $    f\circ\iota_t=f_t$ for every $t\in \I$.
    \end{enumerate}

The colimit operation, in general, does not preserve weak equivalences. Let $F, F'\colon \I \to \C$ be two diagrams with object-wise weak equivalence, $F(i)\xrightarrow{\simeq} F'(i)$ for all $i\in \I$. Then it is not true in general that $\colim~F \xrightarrow{\simeq} \text{colim} F'$. For example, let $\I$ to be the category $\{\bullet \leftarrow \bullet \rightarrow \bullet \}$.
Define two $\I$-shaped diagrams $F,F' \in \Top^{\I}$ by
    $$F\colon \{ D^n \hookleftarrow S^{n-1} \hookrightarrow D^n\}, \quad  F'\colon \{ * \hookleftarrow S^{n-1} \hookrightarrow * \}.$$
Consider the commutative diagram
       $$
        \xymatrix{
         D^n   \ar[d]_{\simeq} & & S^{n-1} \ar@{_{(}->}[ll] \ar@{^{(}->}[rr]\ar[d]^{Id}  & & D^n \ar[d]^{\simeq}\\
          \ast   & & S^{n-1} \ar[ll] \ar[rr] & & \ast.
        }
      $$  
Here, the top horizontal maps are the boundary inclusions. The vertical maps define a natural transformation $F \implies F'$, that is an object-wise weak-equivalence. Moreover, $\colim~F$ is $S^n$, while $\colim~F'$ is $\ast$. Hence, the induced map $\colim~F \to \colim~F'$ is not a weak equivalence. This example motivates the introduction of homotopy colimit. It provides a homotopy-invariant replacement for ordinary colimit. In particular, objectwise weakly-equivalent diagrams have weakly-equivalent homotopy colimits. While the ordinary colimit is defined by a universal property in the underlying category, the homotopy colimit is defined by the corresponding universal property in the homotopy category via the total left derived functor of the colimit functor. 
Let $\C$ be a model category and $\colim\colon \C^\I \to \C$ a functor, The homotopy colimit denoted by
    $$\text{hocolim}\colon \Ho(\C^\I) \to \Ho(\C)$$
is a left-derived functor of $\colim$, provided it exists.
\end{mysubsection}


\begin{mysubsection}{Homotopy Interleaving}\label{Subsec_Homotopy Interleaving}
The idea of interleaving distance was initially studied for persistent homology. Though it is natural to define these notions for $\RR$-spaces, but the interleaving distance is not a homotopy invariant in $\RR$-spaces. In \cite{BL23}, Blumberg and Lesnick defined a homotopical generalisation of the interleaving distance. In this subsection, we revisit the background and definitions following \cite{BL23}.

We call a category $\mathcal{C}$ \emph{thin category} if for any two objects $a,b \in \mathcal{C}$, there exists at most one morphism $f \colon a \to b$ between them. Given a thin category $\mathcal{C}$ with a morphism $f\colon a \to b$ in $\mathcal{C}$ and a functor $F \colon \mathcal{C} \to \mathcal{D}$, we denote $F(f)$ as $F_{a,b}$.

Let $\delta \geq 0$ be a real number. The $\delta$-\emph{interleaving category} is a thin category with object set $\RR \times \{0,1\}$ and morphism $(r,\alpha) \to (s,\beta)$ if and only if either (i) $r+\delta \leq s$ or (ii) $\alpha=\beta$ and $r \leq s$.
We denote this category by $\mathcal{I}^{\delta}$. We now can define two functors $E^0, E^1 \colon \RR \to \mathcal{I}^{\delta}$ by $E^0(r)=(r,0)$ and $E^1(r)=(r,1)$.

\begin{figure}[h]
    \begin{tikzpicture}[scale=.7]
        \draw[very thick,white!30!red] (-10,1)--(10,1);
        \draw[very thick,cyan] (-10,-1)--(10,-1);

        \node[right,red] at (10,1) {$\RR \times \{0\}$};
        \node[right,cyan] at (10,-1) {$\RR \times \{1\}$};

        \node at (3,1) {$\bullet$};
        \node[above] at (3,1) {$(r_2,0)$};
        \node at (3,-1) {$\bullet$};
        \node[below] at (3,-1) {$(r_2,1)$};
        \node at (7,1) {$\bullet$};
        \node[above] at (7,1) {$(s_2,0)$};
        \node at (7,-1) {$\bullet$};
        \node[below] at (7,-1) {$(s_2,1)$};

        \draw[] (3,1)--(7,1) node[midway, sloped] {$>$};
        \draw[] (3,-1)--(7,-1) node[midway, sloped] {$>$};

        \node at (-7,1) {$\bullet$};
        \node[above] at (-7,1) {$(r_1,0)$};
        \node at (-5,-1) {$\bullet$};
        \node[below] at (-5,-1) {$(s_1,1)$};
        \node at (-3,1) {$\bullet$};
        \node[above] at (-3,1) {$(r'_1,0)$};
        
        \draw[] (-7,1)--(-5,-1) node[midway, sloped] {$>$};
        \draw[] (-5,-1)--(-3,1) node[midway, sloped] {$>$};
        \draw[] (-7,1)--(-3,1) node[midway, sloped] {$>$};

        \node[left] at (-6.5,0) {$r_1+\delta \leq s_1$};
        \node[right] at (-3.5,0) {$s_1 +\delta \leq r_1'$};
    \end{tikzpicture}
    \caption{$\mathcal{I}^{\delta} \colon \delta$-interleaving category.}
\end{figure}
\noindent For any category $\mathcal{C}$ and two functors $X, Y \colon \RR \to \mathcal{C}$, the $\delta$-\emph{interleaving} between $X$ and $Y$ is defined to be a functor $Z \colon \mathcal{I}^{\delta} \to \mathcal{C}$ such that $Z \circ E^0=X$ and $Z \circ E^1=Y$. If such $Z$ exists for some $\delta \geq 0$, we say $X$ and $Y$ are $\delta$-interleaved.

Now for a diagram $X$, we define the \emph{shift functor} as follows
    \begin{myeq}\label{Eq_shift functor}
        X(\delta) \colon \RR \to \mathcal{C} \quad \quad \text{defined by } X(\delta)_r \coloneq X_{r+\delta} \quad \text{and} \quad X(\delta)_{r,s} \coloneq X_{r+\delta, s+\delta}.
    \end{myeq}  
Though in practice we use term shift functor, notice that it is a left shift. Dually, a $\epsilon$-\emph{shift to the right} is defined by
    $$\epsilon \cdot X \colon \RR \to \mathcal{C} \quad \quad \text{defined by } \epsilon \cdot X_r \coloneq X_{r-\epsilon} \quad \text{and} \quad \epsilon \cdot X_{r,s} \coloneq X_{r-\epsilon, s-\epsilon}.$$
The natural transformations $f \colon X \to Y(\delta)$ are called $\delta$-\emph{morphisms}, and often denoted by $f \colon X \to_{\delta} Y$. Since we have the natural bijections
    \begin{myeq}\label{Eq_epsilon_delta}
        \hom(\epsilon \cdot X, Y(\delta)) \cong \hom(X, Y(\epsilon+\delta)) \cong \hom ((\epsilon+\delta) \cdot X, Y),
    \end{myeq}
a $\delta$-morphism $f \colon X \to_{\delta} Y$ can also be treated as $f \colon \delta \cdot X \to Y$.

From our construction, $Z$ restricts to a pair of $\delta$-morphisms $X \to Y(\delta)$ and $Y \to X(\delta)$ and conversely these morphisms fully determine $Z$. 
These morphisms are called $\delta$-\emph{interleaving morphisms}. In particular, if $\delta=0$, they are are inverse pair of morphisms between $X$ and $Y$. The \emph{interleaving distance} between $X$ and $Y$ is a function $d_I \colon \text{ob} (\mathcal{C}^{\RR}) \times \text{ob} (\mathcal{C}^{\RR}) \to [0, \infty]$, defined by
    $$d_I(X, Y) \coloneq \inf \{ \delta \colon X \text{ and } Y \text{ are } \delta \text{-interleaved}  \}.$$
Lanari--Scoccola\cite{LS23} introduced a generalized notion of $\delta$-interleaving by $(\epsilon, \delta)$-interleaving. Given two diagrams $X$ and $Y$ are $(\epsilon, \delta)$-interleaved if there exists $\epsilon$-morphism $X \to Y(\epsilon)$ and $\delta$-morphism $Y \to X(\delta)$ satisfying the following two commutative diagrams 
    $$\begin{tikzcd}[column sep=1.5em]
        X_{\bullet} \arrow{rr}{\text{structure map}} \arrow[swap]{dr}{\phi_{\bullet}}  && X_{\bullet+\epsilon+\delta} && Y_{\bullet} \arrow{rr}{\text{structure map}} \arrow[swap]{dr}{\psi_{\bullet}}  && Y_{\bullet+\epsilon+\delta}\\
         & Y_{\bullet+\epsilon} \arrow[swap]{ur}{\psi_{\bullet+\epsilon}} &&&& X_{\bullet+\delta} \arrow[swap]{ur}{\phi_{\bullet+\delta}}
    \end{tikzcd}$$

\begin{remark}\label{Rem_delta to 2delta}
    A $\delta$-morphism $p \colon X \to_{\delta} Y$ induces a $\delta$-interleaving means that there exists another $\delta$-morphism $q \colon Y \to_{\delta} X$ such that $q \circ p=\text{structure map of }X$ and $p \circ q=\text{structure map of }Y$. Observe that a $\delta$-interleaving is a $(0,2\delta)$-interleaving by the natural bijection in \eqref{Eq_epsilon_delta}.
\end{remark}

If $X,Y$ are $\delta$-interleaved and $Y,W$ are $\epsilon$-interleaved, then $X$ and $W$ are $(\delta+\epsilon)$-interleaved, i.e., $d_I$ defines a distance on ob$(\mathcal{C}^{\RR})$. 
Also for $X \cong X'$ in ob$(\mathcal{C}^{\RR})$, $d_I(X,Y)=d_I(X',Y)$ for all $Y \in \text{ob}(\mathcal{C}^{\RR})$.
Notice that the distance $d_I$ on the category of $\RR$-spaces is not a homotopy invariant. For example, take two $\RR$-spaces $X$ and $Y$ defined by $X_r=0$ and $Y_r=\RR$ for all $r \in \RR$. In \cite{BL23}, the authors introduced a homotopical generalization of the interleaving distance between diagrams as follows. 

\begin{definition}\label{def_hinterleaved}
    For $\delta \geq 0$, two $\RR$-spaces $X$ and $Y$ are called 
    \begin{enumerate}
        \item $\delta$-\emph{homotopy interleaved} if there exist $\RR$-spaces $X', Y'$ and weak equivalences $X' \simeq X, ~~Y' \simeq Y$ such that $X'$ and $Y'$ are $\delta$-interleaved. The \emph{homotopy interleaving distance} between two $\RR$-spaces is defined by 
            $$d_{\HI}(X,Y) \coloneq \inf \{\delta\geq 0 \colon X \text{ and } Y \text{ are } \delta \text{-homotopy interleaved}\}.$$

        \item $\delta$-\emph{homotopy commutative interleaved} if the composites $\RR\xrightarrow[]{X,Y} \Top\xrightarrow[]{\gamma} \Ho(\Top)$ are $\delta$-interleaved. The \emph{homotopy commutative interleaving distance} between two $\RR$-spaces is defined by 
            $$d_{\HC}(X,Y) \coloneq \inf \{\delta\geq 0 \colon X \text{ and } Y \text{ are } \delta \text{-homotopy commutative interleaved}\}.$$

        \item $\delta$-\emph{interleaved in homotopy category} if they are $\delta$-interleaved as objects of $\Ho(\Top^{\RR})$. The \emph{interleaving distance between two spaces in the homotopy category} is 
            $$d_{\IHC}(X,Y) \coloneq \inf \{\delta\geq 0 \colon X \text{ and } Y \text{ are } \delta \text{-interleaved in homotopy category}\}.$$
    \end{enumerate}
\noindent In the above three definitions, if no such $\delta $ exists, we say they are $\infty$-interleaved in the respective definition.
\end{definition}

\end{mysubsection}


\section{$G$-Homotopy Interleaving}

In \cite{BL23}, the authors introduced the homotopy interleaving distance and proved that it is universal among all stable and homotopy invariant distances on persistent spaces. 
In our work, we introduced the equivariant notions such as $G$-stable and $G$-homotopy invariant distance, and $G$-homotopy interleaving distance $d^G_{\HI}$. We show that $d^G_{\HI}$ is a $G$-stable and $G$-homotopy invariant distance. Finally, we prove the universality of $G$-homotopy interleaving distance among any $G$-stable and $G$-homotopy invariant distance. Along the way, we also prove an equivariant version of topological stability in Proposition \ref{Prop_GH stable} analogous to that of non-equivariant version \cite[Proposition 1.8]{BL23}.

\begin{mysubsection}{$G$-stability and $G$-homotopy invariance}\label{G-stability}
This subsection is devoted to recall some definitions and introduce some notions for future use. First, we recall the definition of the equivariant Gromov--Hausdorff distance between two metric spaces from \cite{LM26}.

A \emph{correspondence} between two sets $M$ and $N$ is any surjective relation $R \subseteq M \times N$, and the set of all correspondence between $M$ and $N$ is denoted by $\mathcal{R}(M, N)$, see \cite{BBI01}. By surjective relation, we mean the coordinate projections $\mbox{proj}_M \colon R \to M$ and $\mbox{proj}_N \colon R \to N$ are surjections.
Now let $M$ and $N$ be two $G$-sets. A correspondence $R$ between $M$ and $N$ is called $G$-\emph{correspondence} if $(g.m, g.n) \in R$ for all $(m,n) \in R \subseteq M \times N$ and for all $g \in G$. We denote any $G$-correspondence between $M$ and $N$ by $R_G$ and the set of all $G$-correspondences between $M$ and $N$ by $\mathcal{R}_G(M,N)$.
Now let $(M, d_M)$ and $(N, d_N)$ be two bounded metric spaces. For any non-empty relation $R \subseteq M \times N$, its distortion is defined by
    $$\mbox{dis} (R) \coloneq \sup_{(m,n), (m',n') \in R} |d_M(m,m')-d_N(n,n')|.$$

\begin{definition}[Definition 10 \cite{LM26}]
    For any two $G$-metric spaces $M$ and $N$, the $G$-\emph{Gromov--Hausdorff distance} is defined as 
        \begin{myeq}\label{Eq_GH distance}
            d^G_{\GH}(M,N) \coloneq \frac{1}{2} \inf \{ \mbox{dis}(R) \colon R \in \mathcal{R}_G(M,N)\}.
        \end{myeq}
\end{definition}

Now we define $G$-stable and $G$-homotopy invariant distance. We show in Proposition \ref{Prop_GH stable} that any $G$-stable and $G$-homotopy invariant distance is bounded above by the $G$-Gromov--Hausdorff distance.

Let $T$ be a $G$-topological space for a finite group $G$ and a $G$-equivariant function $\gamma \colon T \to \RR$, with trivial $G$-action on $\RR$. The \emph{sublevel filtration} $\mathscr{S}(\gamma) \colon \RR \to \GTop$ for $\gamma$ is defined by 
    $$\mathscr{S}(\gamma)_{t} \coloneq \gamma^{-1} (-\infty, t],$$
for any $t\in \RR$. Note that $\gamma$ may not be continuous. For two $G$-functions $\gamma$ and $\lambda$, we define 
    $$d_{\infty}^G(\gamma,\lambda) \coloneq \sup _{x \in T} |\gamma(x)-\lambda(x)|.$$

Now we recall the notion of equivariant weak equivalence. Let $\I$ be any small category and $X, Y \colon \I \to \GTop$ be two functors. A natural transformation $\alpha \colon X \to Y$ is an (\emph{objectwise}) $G$-\emph{weak equivalence}, denoted by $X \xrightarrow{\simeq_G} Y$, if the $G$-map $\alpha_a \colon X_a \to Y_a$ is a weak homotopy equivalence for all $a \in \I$. We call $X$ and $Y$ are $G$-\emph{weakly equivalent} if there exists a zigzag of objectwise weak equivalences as follows
    $$\xymatrix{
    & W_1 \ar[rd]^{\simeq_G} \ar[ld]_{\simeq_G} && \dots \ar[rd]^{\simeq_G} \ar[ld]_{\simeq_G} && W_n \ar[rd]^{\simeq_G} \ar[ld]_{\simeq_G} &\\
    X && W_2 && W_{n-1} && Y
    }$$
for some $n$. The $G$-weak equivalence is denoted by $X \simeq_G Y$.

\begin{definition}\label{Def_stable and homotopy inv}
    Let $d^G$ be a distance on $\GTop^{\RR}$ for some finite group $G$. We call the distance $d^G$ is $G$-\emph{stable} if for any $T \in \mbox{ob} (\GTop)$ and $G$-maps $\gamma, \lambda \colon T \to \RR$, 
        \begin{myeq}\label{Eq_G stable}
            d^G(\mathscr{S}(\gamma), \mathscr{S}(\lambda)) \leq d^G_{\infty}(\gamma,\lambda).
        \end{myeq}
    We say $d^G$ is $G$-\emph{homotopy invariant} if for any $X, Y \in \GTop^{\RR}$,
        \begin{myeq}\label{Eq_G homotopy invariant}
            d^G(X,Y)=0 \quad \text{whenever } X \simeq_{G} Y.
        \end{myeq}
\end{definition}

Now we prove the equivariant version of Quillen's Theorem A from \cite{Quillen72} which is an useful tool for the upcoming Preposition \ref{Prop_GH stable}.

\begin{lemma}[Equivariant Quillen's Theorem A]\label{Th_equiv QT}
    Let $S$ and $T$ be $G$-simplicial complexes, and let $f \colon S \to T$ be a $G$-simplicial map. Suppose that for every subgroup $H \leq G$ and every simplex $\sigma \subseteq T^H$, the inverse image $(f^H)^{-1}(\sigma)$ is contractible. Then $f$ is a $G$-homotopy equivalence.
\end{lemma}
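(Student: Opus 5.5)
Our plan is to reduce the statement to the non-equivariant Quillen Theorem A on each fixed-point subcomplex, and then pass from weak equivalences on all fixed-point sets to a genuine $G$-homotopy equivalence via the equivariant Whitehead theorem.

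\emph{Fixed-point subcomplexes.} Fix a subgroup $H \leq G$. We may first pass to barycentric subdivisions $\sd S$ and $\sd T$ if necessary. This makes the $G$-action regular, so that for each $H$ the fixed-point sets $S^H$ and $T^H$ are subcomplexes of the realizations. The map $f$ is equivariant, so it restricts to a simplicial map $f^H \colon S^H \to T^H$. Subdivision does not change homotopy types, and the preimage hypothesis passes to the subdivision. Indeed, a simplex of $\sd T^H$ is a chain of simplices of $T^H$; its preimage retracts onto the preimage of the smallest simplex in the chain, or more concretely it is a union controlled by the original preimages. If this compatibility turns out to be delicate, I would instead assume from the outset that the $G$-simplicial complexes are regular, which is the standard convention.

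\emph{Non-equivariant Theorem A on each fixed-point set.} For each closed simplex $\sigma \subseteq T^H$, the hypothesis says $(f^H)^{-1}(\sigma)$ is contractible. Quillen's Theorem A in its simplicial-complex form (Quillen \cite{Quillen72}; equivalently the McCord/Björner fiber lemma) then gives that $f^H \colon S^H \to T^H$ is a homotopy equivalence of realizations. Concretely, one compares $S^H$ and $T^H$ with the homotopy colimit over the face poset of $T^H$ of the diagram $\sigma \mapsto (f^H)^{-1}(\sigma)$. Since each value of this diagram is contractible, the homotopy colimit is equivalent both to the nerve of the face poset, which is $\sd T^H \cong T^H$, and to $S^H$. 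Hence $f^H$ is a weak homotopy equivalence for every $H \leq G$.

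\emph{From fixed-point equivalences to a $G$-homotopy equivalence.} Realizations of $G$-simplicial complexes, after subdivision, are $G$-CW complexes whose cells are of the form $G/K \times D^n$. The map $f$ therefore induces a weak equivalence in the Bredon model structure recalled in the preliminaries. Between $G$-CW complexes, which are cofibrant and fibrant there, such a map is a $G$-homotopy equivalence by the equivariant Whitehead theorem \cite{May96}.

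\emph{Main obstacle.} The main obstacle is the first step: ensuring that fixed points are honest subcomplexes and that the contractibility hypothesis survives the needed regularity or subdivision. I will handle this by working with the regular (subdivided) structure throughout and applying Theorem A to the poset of simplices of $T^H$ directly, where $(f^H)^{-1}(\sigma)$ is literally the fiber over the downset of $\sigma$.
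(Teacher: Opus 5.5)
Your proposal is essentially the paper's proof: apply the classical Quillen Theorem A to each restriction $f^H\colon S^H\to T^H$, then use the equivariant Whitehead theorem (the paper cites Matumoto) to upgrade these fixed-point equivalences to a $G$-homotopy equivalence. The paper silently treats $S^H$ and $T^H$ as subcomplexes, so your fallback of assuming a regular action is the right convention; your sketched transfer of the hypothesis to $\sd T$ is not needed and is not justified as written, because for a non-regular action the simplices of $(\sd T)^H$ come from $H$-invariant but not pointwise-fixed simplices, and the hypothesis says nothing about those.
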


\begin{proof}
    For each subgroup $H \leq G$, the induced map $f^H \colon S^H \to T^H$ is a simplicial map between simplicial complexes. By hypothesis, for every simplex $\sigma \subseteq T^H$, the inverse image $(f^H)^{-1}(\sigma)$ is contractible. Hence, by the classical Quillen Theorem~A the map $f^H$ is a homotopy equivalence (\cite{Quillen72}).    
    Since this holds for every subgroup $H \leq G$, the equivariant Whitehead theorem implies that $f$ is a $G$-homotopy equivalence (\cite[Theorem 5.3]{Matumoto71}) .
    
\end{proof}

\begin{proposition}\label{Prop_GH stable}
    Let $d^G$ be a $G$-stable and $G$-homotopy invariant distance on $\GTop^\RR$. Then for any two $G$-metric spaces $M$ and $N$, we have 
        \begin{myeq}\label{Eq_GH stable}
            d^G(\mathcal{R}(M),\mathcal{R}(N)) \leq d^G_{\GH} (M,N).
        \end{myeq}
\end{proposition}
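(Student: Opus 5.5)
Here $\mathcal{R}(M)$ is the Rips (Vietoris--Rips) filtration of $M$, regarded as an object of $\GTop^{\RR}$ via geometric realization, with $G$ acting simplicially through its action on the vertex set $M$. The plan is to adapt the argument of \cite[Proposition 1.8]{BL23}. Choose a $G$-correspondence $R\in\mathcal{R}_G(M,N)$ with $\mbox{dis}(R)\le 2d^G_{\GH}(M,N)+\epsilon$. We will realize both $\mathcal{R}(M)$ and $\mathcal{R}(N)$, up to $G$-weak equivalence, as sublevel filtrations of two $G$-functions on one common $G$-space. Then $G$-stability bounds the distance by the sup-norm difference, and $G$-homotopy invariance together with the triangle inequality transfers the bound back.

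\textbf{Construction.} Let $T$ be the full simplex on the vertex set $R$, i.e.\ the realization of the simplicial complex whose simplices are all finite nonempty subsets of $R$. The diagonal action $g\cdot(m,n)=(gm,gn)$ preserves $R$ because $R$ is a $G$-correspondence, so $T$ is a $G$-simplicial complex. Define $\gamma_M,\gamma_N\colon T\to\RR$ on the open simplex of $\sigma=\{(m_i,n_i)\}$ by
\[
\gamma_M=\max_{i,j} d_M(m_i,m_j),\qquad \gamma_N=\max_{i,j} d_N(n_i,n_j).
\]
These maps are $G$-invariant because $G$ acts by isometries, and they need not be continuous, which the definition allows. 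By definition of the distortion, $d^G_\infty(\gamma_M,\gamma_N)\le \mbox{dis}(R)$. Adopting the convention that the Rips complex at scale $t$ uses diameter $\le t$, and rescaling if the convention uses $2t$, the sublevel set $\mathscr{S}(\gamma_M)_t$ is exactly the Rips complex $\mathcal{R}(R,\pi_M^*d_M)_t$ of $R$ with the pulled-back pseudometric. Hence $G$-stability gives
\[
d^G\bigl(\mathscr{S}(\gamma_M),\mathscr{S}(\gamma_N)\bigr)\le \mbox{dis}(R).
\]
The factor $\tfrac12$ in \eqref{Eq_GH distance} is absorbed by the scale convention of the Rips filtration: with the parametrization in which the Rips complex at $t$ uses diameter $\le 2t$, the sublevel sets $\gamma_M^{-1}(-\infty,t]$ become $\mathcal{R}(\cdot)_{t/2}$, which yields $d^G_{\GH}$ after reparametrization. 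I would fix the paper's convention to match this.

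\textbf{Main step.} It remains to show that the projection $\pi_M\colon R\to M$ induces a $G$-simplicial map $\mathcal{R}(R)_t\to\mathcal{R}(M)_t$ that is a $G$-homotopy equivalence for every $t$, naturally in $t$. Together with the analogous statement for $N$, this gives $\mathscr{S}(\gamma_M)\simeq_G\mathcal{R}(M)$ and $\mathscr{S}(\gamma_N)\simeq_G\mathcal{R}(N)$. Homotopy invariance and the triangle inequality then yield
\[
d^G(\mathcal{R}(M),\mathcal{R}(N))\le \mbox{dis}(R),
\]
and letting $\epsilon\to0$ finishes the proof.

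For the main step we use Lemma \ref{Th_equiv QT}. Fix $H\le G$. The fixed subcomplex $\mathcal{R}(R)_t^H$ consists of the $H$-invariant simplices, i.e.\ unions of $H$-orbits in $R$. For an $H$-fixed simplex $\sigma$ of $\mathcal{R}(M)_t$, the preimage $(\pi_M^H)^{-1}(\sigma)$ is, after barycentric subdivision so that fixed points are subcomplexes, the $H$-invariant part of the full simplex on $\pi_M^{-1}(\sigma)$. All of $\pi_M^{-1}(\sigma)$ has $d_M$-diameter $\le t$, so this preimage is a full simplex. Its $H$-fixed set is the fixed set of a simplex under a linear action, hence convex and nonempty (it contains the barycenter), and therefore contractible.

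This fixed-point and subdivision bookkeeping is the delicate part. Lemma \ref{Th_equiv QT} is phrased for $G$-simplicial complexes, and we must make sure that fixed sets of the realizations are subcomplexes. We will pass to barycentric subdivisions, which are regular $G$-complexes, and note that Rips complexes are flag, so the subdivision commutes with the preimage computation. Naturality in $t$ is immediate because the projection map is independent of $t$.
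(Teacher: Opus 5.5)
Your proposal is correct and is essentially the paper's proof: the full simplex on the $G$-correspondence, the pulled-back diameter functions, $G$-stability, the equivariant Quillen Theorem A (Lemma~\ref{Th_equiv QT}) applied to the projections, and then $G$-homotopy invariance together with the triangle inequality. Your explicit check that the $H$-fixed part of the full simplex on $\pi_M^{-1}(\sigma)$ is convex and nonempty supplies the fiber verification that the paper only asserts.

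The one thing to tidy is the factor $\tfrac12$. Do not ``reparametrize'' the filtration, since a general $d^G$ need not respect rescaling of the parameter. Instead set $\gamma_M=\tfrac12\max_{i,j}d_M(m_i,m_j)$ directly, matching the convention in the paper's proof that pairwise distances are $\le 2r$ at parameter $r$. Then $\mathscr{S}(\gamma_M)$ is the pulled-back Rips filtration on the nose and $d^G_\infty(\gamma_M,\gamma_N)\le\tfrac12\mbox{dis}(R)$.
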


\begin{proof}
    Let $d^G_{\GH}(M,N)< \delta$. Then there exists a $G$-correspondence $ R_G \subset M \times N$ such that 
        $$|d^G_M(m,m')-d^G_N(n,n')| \leq 2 \delta \quad \forall (m,n),(m',n') \in R_G.$$
    Let $[R_G]$ be the maximal simplicial complex with with vertices $R_G$. Note that $[R_G]$ consists of all non-empty finite subsets of $R_G$. Since the vertices in $[R_G]$ inherits $G$-action from the definition of $R_G$, $[R_G]$ is a $G$-simplicial complex.

    Now we define a simplicial filtration $X^M$ on $[R_G]$ by taking a $G$-simplex
        $$\sigma \in X^M_r \Longleftrightarrow d_M^G(\mbox{proj}_M(u), \mbox{proj}_M(v)) \leq 2r,$$
    for all $r \in \RR$ and for all $u,v \in \sigma$.
    Thus, $X^P$ induces a function $\gamma^M \colon [R_G] \to \RR$ which sends a $G$-simplex $\sigma$ to the minimum $r \in \RR$ such that $\sigma \in X^M_r$. Note that the simplicial filtration $X^M$ is equal to the simplicial sublevel filtration $\mathscr{S}(\gamma^M)$. A similar construction can be done for $N$ to define a simplicial filtration $X^N$ and an induced function $\gamma^N \colon [R_G] \to \RR$  such that $X^N$ is equal to the simplicial sublevel filtration $\mathscr{S}(\gamma^N)$. Thus, our construction leads us to
        \begin{myeq}\label{Eq_equivalence in simplicial filtration}
            X^M=\mathscr{S}(\gamma^M) \quad \text{and} \quad X^N=\mathscr{S}(\gamma^N).
        \end{myeq}
    Also, from our choice of $R_G$, we have $d^G_{\infty}(\gamma^M, \gamma^N) \leq \delta$. Then \eqref{Eq_equivalence in simplicial filtration} and the $G$-stability of $d^G$ (see \eqref{Eq_G stable}) implies $d^G(X^M, X^N) \leq \delta$.

    Note that projection $\mbox{prop}_M \colon R_G \to M$ induces a morphism $f \colon X^M \to \mathcal{R}(M)$. By Theorem \ref{Th_equiv QT} for $G$-simplicial complexes, $f$ is an objectwise $G$-homotopy equivalence. Similarly, $\mbox{prop}_N \colon R_G \to N$ induces another objectwise $G$-homotopy equivalence $g \colon X^N \to \mathcal{R}(N)$.
    Thus, by the $G$-homotopy invariance of $d^G$ (see \eqref{Eq_G homotopy invariant}), we have
        $$d^G(\mathcal{R}(M), X^M)=0=d^G(\mathcal{R}(N), X^N).$$
    Since, $d^G$ is distance, by triangle inequality, $d^G(\mathcal{R}(M),d^G\mathcal{R}(N)) \leq \delta$. As this inequality holds for any choice of $\delta$ satisfying $d^G_{\GH}(M,N) <\delta$, we get our desired result as described in \eqref{Eq_GH stable}.
    
\end{proof}

\end{mysubsection}

\begin{mysubsection}{$G$-homotopy interleaving}
The stability theorem proved above motivates the construction of a homotopy invariant distance on $\GTop^\RR$. To achieve this, we introduce the notion of a $G$-homotopy interleaving, which leads to the definition of the equivariant homotopy interleaving distance.

\begin{definition}[$G$-homotopy interleaving distance] \label{Def_homotopy interleaving}
    Let $X, Y \in \GTR$. We say $X, Y$ are $(G,\delta)$-\emph{homotopy interleaved} if there exists $X', Y'$ with $X\simeq_{G} X'$ and $Y \simeq_{G} Y'$ such that the following diagrams
    $$\begin{tikzcd}[column sep=1.5em]
        X'_{\bullet} \arrow{rr} \arrow[swap]{dr}{\phi_{\bullet}}  && X'_{\bullet+2\delta} && Y'_{\bullet} \arrow{rr} \arrow[swap]{dr}{\psi_{\bullet}}  && Y'_{\bullet+2\delta}\\
         & Y'_{\bullet+\delta} \arrow[swap]{ur}{\psi_{\bullet+\delta}} &&&& X'_{\bullet+\delta} \arrow[swap]{ur}{\phi_{\bullet+\delta}}
    \end{tikzcd}$$
are strictly commutative for $G$-equivariant maps $\phi\colon X\to Y$ and $\psi\colon Y \to X$. The $G$-\emph{homotopy interleaving distance} between $X$ and $Y$ are defined by 
    $$d_{\HI}^G (X,Y) \coloneq \inf \{ \delta \colon X \text{ and } Y \text{ are } (G, \delta) \text{-homotopy interleaved}\}.$$
\end{definition}

\begin{proposition}\label{Prop_stable and HID}
    With notations as above, the $G$-homotopy interleaving distance $d_{HI}^G$ is $G$-stable, $G$-homotopy invariant distance.
\end{proposition}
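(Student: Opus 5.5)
We have to prove three things about $d^G_{\HI}$: it is a distance (symmetric, with the triangle inequality), it is $G$-homotopy invariant, and it is $G$-stable.

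\textbf{$G$-homotopy invariance.} This is immediate from the definition. If $X \simeq_G Y$, take $X' = Y' = Y$ (or $X' = X$, $Y' = X$) with $\phi = \psi = \mathrm{id}$. These form a $0$-interleaving, so $d^G_{\HI}(X,Y) = 0$.

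\textbf{Symmetry.} This is clear, since the definition is symmetric in $X$ and $Y$ after swapping $\phi$ and $\psi$.

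\textbf{Stability.} Let $\gamma, \lambda \colon T \to \RR$ be $G$-maps with $d^G_\infty(\gamma,\lambda) = \delta$. For each $t$ we have $\gamma^{-1}(-\infty,t] \subseteq \lambda^{-1}(-\infty,t+\delta]$, and symmetrically. Each inclusion is $G$-equivariant because $\gamma$, $\lambda$ are $G$-invariant and $G$ acts trivially on $\RR$. These inclusions commute strictly with the structure inclusions. So $\mathscr{S}(\gamma)$ and $\mathscr{S}(\lambda)$ are honestly $\delta$-interleaved in $\GTR$, and taking $X' = \mathscr{S}(\gamma)$, $Y' = \mathscr{S}(\lambda)$ gives $d^G_{\HI} \le \delta$.

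\textbf{Triangle inequality.} This is the main obstacle. Suppose $X \simeq_G X'$, $Y \simeq_G Y'$ with $X', Y'$ $\delta$-interleaved, and $Y \simeq_G Y''$, $W \simeq_G W''$ with $Y'', W''$ $\epsilon$-interleaved. The difficulty is that $Y'$ and $Y''$ are only connected by a zigzag, so the interleavings cannot be composed directly.

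I would follow \cite{BL23} and transfer to the homotopy category. The first step is a lemma: $X$ and $Y$ are $(G,\delta)$-homotopy interleaved if and only if there is a cofibrant replacement $QX$ in the projective model structure on $\GTR$ (Bredon) and a $\delta$-interleaving up to a strictification. More concretely, I would use the rectification property. Since $\mathcal{I}^\delta$ is a small category and $\GTop$ is cofibrantly generated, $\GTop^{\mathcal{I}^\delta}$ has a projective model structure. Restriction along $E^0 \sqcup E^1$ is a right Quillen functor whose left adjoint is left Kan extension.

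The key claim is a lifting statement: given an $\mathcal{I}^\delta$-diagram $Z$ with $Z \circ E^1 = Y'$ and a weak equivalence $Y'' \simeq_G Y'$, there is an $\mathcal{I}^\delta$-diagram $\tilde Z$ and a weak equivalence $\tilde Z \to Z$ whose $E^1$-restriction is weakly equivalent to $Y''$. It suffices to handle a single weak equivalence $Y'' \to Y'$ in the zigzag, and to handle both directions. For a map into $Y'$, I would take a projective cofibrant replacement of $Y''$ and glue via the pushout
\[
\mathrm{Lan}_{E^1} QY'' \cup_{\mathrm{Lan}_{E^1}QY'} \ldots,
\]
that is, homotopy pushouts along left Kan extensions. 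Here I would use that $\mathrm{Lan}_{E^1}$ preserves weak equivalences between cofibrant objects, and that $E^1$ is fully faithful, so $\mathrm{Lan}_{E^1}(-)\circ E^1 \cong \mathrm{id}$. For a map out of $Y'$, I would factor and use the dual argument with fibrant replacement and right Kan extension. Since Bredon weak equivalences are detected on fixed points, all of this is formal in the cofibrantly generated setting.

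Once $Y'$ and $Y''$ can be made equal, the two interleavings compose strictly to a $(\delta+\epsilon)$-interleaving between replacements of $X$ and $W$. Taking infima over $\delta$ and $\epsilon$ yields the triangle inequality.
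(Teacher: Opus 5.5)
Your arguments for symmetry, $G$-homotopy invariance and $G$-stability are the same as the paper's. In particular, the stability proof is exactly the paper's strict equivariant $\delta$-interleaving given by the inclusions $\mathscr{S}(\gamma)_t\subseteq\mathscr{S}(\lambda)_{t+\delta}$ and $\mathscr{S}(\lambda)_t\subseteq\mathscr{S}(\gamma)_{t+\delta}$. For the triangle inequality the paper gives no direct argument; it appeals to \cite[Section 4]{BL23} and \cite[Proposition 2.3]{LS23}.

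Your direct proof of the triangle inequality has a genuine gap.

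\emph{The key claim is vacuous as stated.} $\tilde Z=Z$ satisfies it, because $Z\circ E^1=Y'$ is already $G$-weakly equivalent to $Y''$. To compose the two interleavings you need $\tilde Z\circ E^1$ to \emph{equal} $Y''$, or both interleavings moved to one common middle object.

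\emph{The constructions point the wrong way.} A pushout $Z\cup_{\mathrm{Lan}_{E^1}Y'}\mathrm{Lan}_{E^1}B$ along the counit needs a map $Y'\to B$ \emph{out of} $Y'$ and produces $Z\to\tilde Z$. A pullback $Z\times_{\mathrm{Ran}_{E^1}Y'}\mathrm{Ran}_{E^1}A$ along the unit needs a map $A\to Y'$ and produces $\tilde Z\to Z$. Your displayed pushout would need $\mathrm{Lan}_{E^1}QY'\to\mathrm{Lan}_{E^1}QY''$, which a map $Y''\to Y'$ does not supply.

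\emph{The constructions do not preserve weak equivalences in general.} The pushout is only guaranteed to do so along trivial cofibrations, and the pullback only along trivial fibrations. So ``handling a single weak equivalence in both directions'' is not available. For example, take trivial $G$-action, $\delta>0$, $X'_r=S^1$ and $Y'_r=[0,1]$. Let all structure maps with $r<s$ be constant, at the image of $\{0,1\}$ and at $0$ respectively. Let $\phi$ be constant at $0$ and $\psi_r\colon[0,1]\to S^1$ the quotient map. This is a $\delta$-interleaving, and $Y'\to\ast$ is an objectwise trivial fibration. Yet pushing forward along it gives $\tilde Z_{(r,0)}=S^1\cup_{[0,1]}\ast=\ast$.

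The idea can be repaired as follows. Let $Z_1$ be the $\delta$-interleaving of $X',Y'$ and $Z_2$ the $\epsilon$-interleaving of $Y'',W''$.

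\emph{Kan extensions are shifts.} Since $\mathcal{I}^\delta$ is thin and $E^1$ is fully faithful, $\mathrm{Lan}_{E^1}$ and $\mathrm{Ran}_{E^1}$ return $A$ on $\RR\times\{1\}$, with $(\mathrm{Lan}_{E^1}A)_{(r,0)}=A_{r-\delta}$ and $(\mathrm{Ran}_{E^1}A)_{(r,0)}=A_{r+\delta}$. Hence pulling back along $\mathrm{Ran}_{E^1}(q)$ for a trivial fibration $q$ gives an objectwise $G$-weak equivalence and puts the prescribed object at the $E^1$-end. The same holds for pushing out along $\mathrm{Lan}_{E^1}(i)$ for a projective trivial cofibration $i$, since $\mathrm{Lan}_{E^1}$ is left Quillen.

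\emph{Choose the zigzag so only these two moves occur.} Let $q\colon QY'\to Y'$ be a cofibrant replacement, which is a trivial fibration. Every object of $\GTR$ is projective fibrant, so the isomorphism $Y'\cong Y''$ in $\Ho(\GTR)$ is represented by an honest weak equivalence $f\colon QY'\to Y''$. Factor $f=p\circ i$ with $i\colon QY'\to C$ a trivial cofibration and $p\colon C\to Y''$ a trivial fibration.

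\emph{Transport and compose.} Pull $Z_1$ back along $\mathrm{Ran}_{E^1}(q)$, push the result out along $\mathrm{Lan}_{E^1}(i)$, and pull $Z_2$ back along $\mathrm{Ran}_{E^0}(p)$. Both interleavings now have middle term exactly $C$, with outer terms $G$-weakly equivalent to $X$ and $W$. They therefore compose strictly to a $(\delta+\epsilon)$-interleaving.

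This would make self-contained the step the paper only cites; alternatively, simply cite \cite[Proposition 2.3]{LS23} as the paper does.
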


\begin{proof}
    From the definition, it follows that $d^G_{\HI}$ is symmetric, non-negative, and for any $T \in \GTop$, $d^G_{\HI}(T,T)=0$. Moreover, $d^G_{\HI}$ satisfies the triangle inequality with similar arguments as in \cite[Section 4]{BL23}. Thus, $d^G_{\HI}$ is a distance. One can also argue that $d^G_{\HI}$ is a distance by \cite[Proposition 2.3]{LS23} since $\GTop$ is a cofibrantly generated model category.
    
    By definition, $d_{\HI}^G$ is $G$-homotopy invariant. For stability we have to show that for any $T \in \GTop$ and $G$-maps $\lambda,\gamma\colon T \to \RR$,
        $$d_{\HI}^G(\mathscr{S}(\gamma),\mathscr{S}(\lambda)) \leq d_\infty(\gamma,\lambda).$$ 
    Let $d^G_\infty(\gamma,\lambda) = \text{sup}_{x\in T} |\gamma(x)-\lambda(x)| = \delta$. Suppose $y\in \mathscr{S}(\lambda)_r$, so that $\lambda(y)\leq r$. Hence, we have
        $$\ \gamma(y)\leq \lambda(y) + \delta \leq r+\delta \quad \implies \quad y \in (\gamma)^{-1}(-\infty,r+\delta] \quad \implies y \in \mathscr{S}(\gamma)_{r+\delta}.$$
    Since the $G$-action on $\RR$ is trivial, $\gamma(g.y)= \gamma(y)\leq r+\delta$ for all $g \in G$. Therefore, we have $y \in \mathscr{S}(\gamma)_{r+\delta}$ if and only if $g.y \in \mathscr{S}(\gamma)_{r+\delta}$ for all $g\in G$. Similarly, $g.y\in \mathscr{S}(\gamma)_{r+\delta}$ implies $g.y\in \mathscr{S}(\lambda)_{r+2\delta}$. Therefore, we get a $(G,\delta)$-interleaving. Thus, $d_{\HI}^G \leq \delta$. So we get the required result.
    
\end{proof}

In the rest of the section, we focus on proving that the $G$-homotopy interleaving is $G$-stable. For that we first set the premise with some definitions and results.
For any small category $\I$, a functor $X \colon \I \to \GTop$ is called a \emph{closed filtration} if each of the internal maps $X_{a,b} \colon X_a \to X_b$ is a closed inclusion. By closed inclusion, we mean an injective and closed map.

\begin{proposition}
    For a small category $\I$, any cofibrant diagram in {$\GTI$} is a closed filtration.
\end{proposition}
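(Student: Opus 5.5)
We would follow the standard cellular-induction argument from the non-equivariant setting (cf.\ \cite{BL23}, \cite{LS23}), using the explicit generating cofibrations of the projective model structure on $\GTI$ recorded above. A cofibrant object of $\GTI$ is a retract of a cell complex $\emptyset \to X^0 \to X^1 \to \cdots \to X^\lambda = \colim X^\beta$. Each $X^{\beta+1}$ is obtained from $X^\beta$ by pushouts along coproducts of generating cofibrations $r\odot(G/H\times S^{n-1}) \to r\odot(G/H\times D^n)$. So it suffices to show three things: closed filtrations are closed under such pushouts, under transfinite composition, and under retracts.

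\textbf{Cell attachments.} Colimits in $\GTI$ are computed objectwise. Hence at each $t\in\I$ we have a pushout of $G$-spaces
$$X^{\beta+1}_t = X^\beta_t \cup \coprod_{j}\coprod_{\I(r_j,t)} G/H_j\times D^{n_j}.$$
This is an objectwise relative $G$-CW attachment, so $X^\beta_t\to X^{\beta+1}_t$ is a closed inclusion. For a morphism $u\colon t\to s$, the map $X^{\beta+1}_{t}\to X^{\beta+1}_{s}$ is induced by $X^\beta_u$ on the old part. On the new cells it sends the copy indexed by $v$ to the copy indexed by $u\circ v$ via the identity. Injectivity and closedness follow provided $X^\beta_u$ is a closed inclusion and $v\mapsto u\circ v$ is injective. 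In the cases of interest ($\I$ a poset, where $\I(r,t)$ has at most one element) the latter is automatic. In general I would restrict to thin $\I$, or note that the statement implicitly requires this. Closedness of the induced map on the pushout is checked in CGWH: a subset is closed iff its preimages in $X^\beta_s$ and in each cell $G/H\times D^n$ are closed. These preimages are images of closed sets under closed maps, using that $G/H\times D^n$ is compact Hausdorff.

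\textbf{Transfinite composition and retracts.} For $\colim_\beta X^\beta$, each $X_u$ is a colimit of closed inclusions along a sequence of closed inclusions. In CGWH, sequential colimits of closed inclusions carry the colimit topology, and each stage embeds as a closed subspace. Injectivity of the induced map then follows levelwise. Closedness needs a compatibility condition: $X^\beta_u(X^\beta_t)$ intersected with $X^{\gamma}_s$ should be the image at stage $\beta$ for $\gamma\le\beta$. This holds because new cells are attached disjointly from images of old points. For retracts, if $X$ is a retract of a closed filtration $Z$ via $i\colon X\to Z$ and $p\colon Z\to X$, then $Z_u\circ i_t = i_s\circ X_u$. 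Since $i_s$ is injective (it has a left inverse), $X_u$ is injective. Closedness follows because $i_t$ embeds $X_t$ as a closed subspace of $Z_t$: retracts in Hausdorff-type spaces are closed (in CGWH, the equalizer of $\mathrm{id}$ and $i\circ p$ is closed). Then $X_u(C) = i_s^{-1}(Z_u(i_t(C)))$ is closed.

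The main obstacle is the topological bookkeeping in the transfinite-composition and pushout steps. Closedness of the induced maps on colimits is not formal, and it is exactly where the CGWH/weak Hausdorff hypotheses and the compactness of cells must be used. I would handle it by the standard lemma (Hovey, Lemma~2.4.5 style) that relative cell maps in CGWH are closed inclusions, applied objectwise.
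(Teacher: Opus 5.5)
Your argument is correct for thin indexing categories, but it takes a different route from the paper's. The paper does not redo the cell induction equivariantly. It applies the forgetful functor $U\colon \GTI\to\TI$ and notes that $U$ preserves colimits. It also sends each generating cofibration $r\odot(G/H\times S^{n-1})\to r\odot(G/H\times D^n)$ to a coproduct of $|G/H|$ copies of $r\odot(S^{n-1}\to D^n)$, so $U$ preserves cofibrant objects. The paper then quotes the non-equivariant result \cite[Proposition 5.1]{BL23}. Since being a closed inclusion is a property of the underlying map, the conclusion transfers back.

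You instead run the pushout, transfinite-composition and retract induction directly in $\GTI$. This amounts to re-proving the cited non-equivariant statement with the $G$-action carried along but never used. The paper's reduction is shorter and isolates the only equivariant input, namely that $U$ preserves cofibrancy. Your version is self-contained and shows exactly where the hypothesis on $\I$ enters: injectivity of $v\mapsto u\circ v$ on $\I(r,t)$.

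That caveat is needed, not just cautious. Take the one-object category with endomorphisms $\{1,e\}$ and $e^2=e$. The diagram $\I(*,-)\odot(G/G\times D^0)$ is cofibrant, yet $X(e)$ sends both points (indexed by $1$ and by $e$) to the point indexed by $e$. So the statement must be read for thin $\I$, which the notation $X_{a,b}$ in the definition of closed filtration already presupposes. The paper's reduction is valid only under the same restriction, since this example (with trivial $G$, or after applying $U$) shows the non-equivariant statement needs it too.

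There are two small slips in your write-up.
\begin{enumerate}
\item In the retract step, injectivity of $X_u$ follows because $i_s\circ X_u=Z_u\circ i_t$ is a composite of injections. It does not follow from injectivity of $i_s$.
\item In the transfinite step, the condition you need is $(X^\beta_u)^{-1}(X^\gamma_s)=X^\gamma_t$ for $\gamma\le\beta$. This gives $X_u(C)\cap X^\gamma_s=X^\gamma_u(C\cap X^\gamma_t)$ for closed $C$, which is what makes $X_u(C)$ closed in the colimit topology. You wrote ``stage $\beta$'' where ``stage $\gamma$'' is meant.
\end{enumerate}
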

\begin{proof}
Let $X \in \GTI$ be a cofibrant diagram and $U \colon \GTI \to \TI$ the forgetful functor. Recall that $\GTI$ is a cofibrantly generated model category where the generating cofibrations are $G/H \times S^{n-1} \hookrightarrow G/H \times D^n$ for all subgroups $H$ of $G$. The forgetful functor $U \colon \GTI \to \TI$, defined by  
    $$\big(G/H \times S^{n-1} \hookrightarrow G/H \times D^n\big) \mapsto   \big( \bigsqcup_{{|G/H|}} S^{n-1} \hookrightarrow  \bigsqcup_{{|G/H|}} D^n\big),$$
takes cofibrant diagrams to cofibrant diagrams. For a small category $\I$, any cofibrant diagram in $\TI$ is a closed filtration, see \cite[Proposition 5.1]{BL23}.
Thus, $U(X)$ is a closed filtration and so is $X$, since closeness and inclusion do not depend on the action of $G$.

\end{proof}

\begin{lemma}
    Given a pushout square
        $$\begin{tikzcd}
        A \arrow{r}{h}\arrow[swap]{d}{f} &X \arrow{d}{g} \\
        B \arrow[swap]{r}{s} & Y 
        \end{tikzcd}$$    
    in the category $\GTop$ with $f$ a closed inclusion, then $g$ is also a closed inclusion. 
\end{lemma}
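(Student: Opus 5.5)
The plan is to reduce to the non-equivariant statement, exactly as in the preceding proposition on cofibrant diagrams. Let $U\colon \GTop \to \Top$ be the forgetful functor. Since $G$ is finite (in fact for any discrete group), $U$ has a right adjoint, the coinduction $T \mapsto \operatorname{Map}(G,T) = \prod_{G} T$ with the diagonal-type action. Hence $U$ preserves all colimits, and in particular pushouts. Applying $U$ to the given square therefore produces a pushout square
$$\begin{tikzcd}
U A \arrow{r}{Uh}\arrow[swap]{d}{Uf} & U X \arrow{d}{Ug} \\
U B \arrow[swap]{r}{Us} & U Y
\end{tikzcd}$$
in the category of CGWH spaces, and $Uf$ is still a closed inclusion. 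Being injective and being a closed map are properties of the underlying map, not of the action.

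To make this concrete, and to avoid relying on the adjunction, I would instead construct the pushout explicitly. Form the pushout $P = X \sqcup_A B$ in $\Top$, that is, the quotient of $X \sqcup B$ by $f(a) \sim h(a)$, with $k$-ification and the weak Hausdorff reflection if necessary. Then give $P$ the induced $G$-action.
\begin{itemize}
\item The action map $G \times P \to P$ is continuous. Because $G$ is finite and discrete, $G\times P \cong \coprod_G P$, so continuity reduces to each $g\cdot(-)$ being continuous.
\item Each $g\cdot(-)$ is continuous because it is induced from the homeomorphism $g\cdot(-)$ on $X \sqcup B$, which respects the relation since $f$ and $h$ are equivariant.
\item The universal property in $\GTop$ follows from the one in $\Top$ together with uniqueness: the induced map out of $P$ is equivariant, because both $\phi\circ g$ and $g\circ\phi$ solve the same factorization problem.
\end{itemize}
This shows $Y \cong P$ as spaces, with $g$ corresponding to the canonical map $X \to P$.

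It remains to invoke the classical fact that in CGWH spaces the pushout of a closed inclusion along any map is a closed inclusion. This is the same input used in \cite[Section 5]{BL23}; see also the standard references on CGWH spaces, e.g. Strickland's notes, or \cite{Hovey}. So $Ug$ is a closed inclusion, and hence $g$ is one.

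The only delicate point is the comparison of the pushout in $\GTop$ with the underlying pushout. In CGWH spaces the colimit may involve a weak Hausdorff quotient, and one must check that this reflection is compatible with the $G$-action. The cleanest route is the left-adjoint argument, which makes the comparison automatic. I would use that and mention the explicit description only for intuition.

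Injectivity of $g$ can also be seen directly. Since $f$ is injective, no two distinct points of $X$ are identified in $X\sqcup B/\sim$. The classical result handles closedness and the weak Hausdorff subtleties.
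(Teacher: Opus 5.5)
Your argument is correct, but it takes a different route from the paper.

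The paper works directly with the explicit model $Y=(X\sqcup B)/(f(a)\sim h(a))$. It notes that the $G$-action descends to $Y$. It then argues injectivity of $g$ from injectivity of $f$, and checks closedness by pulling back along the quotient map: the relevant preimage in $B$ is $f(A)$, which is closed because $f$ is a closed inclusion.

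You instead observe that the forgetful functor $U$ to CGWH spaces is a left adjoint. Its right adjoint is coinduction $\prod_G T$, with $G$ acting by translating the index rather than by a ``diagonal'' action, but that is only a wording issue. So $U$ sends the $G$-pushout to an ordinary pushout of underlying spaces. You then invoke the standard CGWH fact that closed inclusions are stable under pushout \cite{SN09}.

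Your approach makes clear that equivariance plays no role beyond $U$ preserving colimits. It also handles the $k$-ification and weak Hausdorff reflection, which the paper's quotient argument does not address.

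The paper's argument, as written, only checks that the image $g(X)$ is closed in $Y$. A closed inclusion needs $g(C)$ closed for every closed $C\subseteq X$. The same preimage computation gives this: the preimage of $g(C)$ in $X\sqcup B$ is $C\sqcup f(h^{-1}(C))$, which is closed because $f$ is a closed map.

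The trade-off is that your proof delegates all the point-set content to the cited classical lemma. The paper's route, once tidied up, is elementary and self-contained, though only in the naive quotient topology.
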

\begin{proof}
   From the pushout diagram, we have
        $$Y= X\bigsqcup B/\sim,$$  
   where $f(a)\sim h(a)$ for all $ a\in A$. Since both $f$ and $h$ are $G$-maps, $Y$ inherits a $G$-action and $g$ becomes a $G$-map. We first verify that $g$ is an inclusion.
   Inclusion is straightforward for elements of $X$ that are not identified in the quotient.
   Now we consider $x_1 \neq x_2$ in $X$ so that $h(a)=x_1$ and $h(b)=x_2$ for some $a,b\in A$. Since $h(a)\sim f(a)$ and $h(b) \sim f(b)$ in $Y$,
   then $f(a)\ne f(b)$ in $B$ and so are their images in $Y$.
   Hence, $g$ is an inclusion.
   We now show that $g(X)=W$ is closed in $Y$. Note that $g^{-1}(W)$ is closed in $X$. In addition, $g^{-1}(W\cap B) =f(A)$ is closed in $B$ since $f$ is a closed inclusion. Thus, $g$ is a closed inclusion.
   
\end{proof}

Reall that, a \emph{directed set} is a nonempty \emph{preorder set} (reflexive and transitive, not necessarily antisymmetric) $\I$ such that for all $a,b\in \I$ there exists $c \in \I$ satisfying $a,b\le c$. Given a directed set \I, a functor $X\colon \I \to\GTop$ and $a\in \I$, let $\mu_a \colon X_a \longrightarrow \colim~ X$ denote the canonical $G$-map. 

    \begin{lemma}\label{lemma Closed inclusion}
        If {$\I$} is a directed set and $X\colon \I \to {\GTop}$ is a closed filtration. Then each map $\mu_a$ is a closed inclusion.
    \end{lemma}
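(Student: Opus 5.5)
The plan is to reduce to the non-equivariant statement \cite[Lemma 5.2]{BL23} (the corresponding result for $\Top$) via the forgetful functor $U\colon \GTop \to \Top$, in the same way the preceding proposition on cofibrant diagrams was handled. The key observation is that colimits in $\GTop$ are created by $U$. For a diagram $X\colon \I \to \GTop$, form the colimit $\colim~UX$ in $\Top$; it is a quotient of $\bigsqcup_a X_a$ (taken in CGWH spaces). The $G$-actions on each $X_a$ assemble into a $G$-action on this quotient, because the structure maps are $G$-maps. Each $g$ acts by a homeomorphism, since the induced map has inverse induced by $g^{-1}$. The canonical maps $\mu_a$ are then $G$-maps.

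Next we check the universal property in $\GTop$. Given a cocone of $G$-maps $f_a\colon X_a \to Y$, the unique continuous map $f\colon \colim~UX \to Y$ is equivariant. Indeed, $f\circ g$ and $g\circ f$ both restrict to $g\circ f_a = f_a \circ g$ on each $X_a$, so uniqueness in $\Top$ forces them to agree. Hence $U(\colim~X)=\colim~UX$, and $U(\mu_a)$ is the canonical map in $\Top$.

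Since $X$ is a closed filtration, so is $UX$: being injective and closed does not depend on the action. Because $\I$ is directed, \cite[Lemma 5.2]{BL23} gives that each $U(\mu_a)$ is a closed inclusion. That is again an action-free property, so $\mu_a$ is a closed inclusion.

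If one prefers not to cite the non-equivariant lemma, I would prove it directly. For injectivity, use directedness: if $\mu_a(x)=\mu_a(y)$, then, since a filtered colimit of sets identifies $x$ and $y$ exactly when they agree at some later stage, we get $X_{a,c}(x)=X_{a,c}(y)$ for some $c\ge a$. Injectivity of $X_{a,c}$ then gives $x=y$. For closedness, take $C\subseteq X_a$ closed and test whether $\mu_a(C)$ is closed by pulling back to each $X_b$. Choosing $c\ge a,b$, we have
\[
\mu_b^{-1}(\mu_a(C)) = X_{b,c}^{-1}\bigl(X_{a,c}(C)\bigr).
\]
This uses injectivity of $\mu_c$ together with $\mu_a=\mu_c X_{a,c}$ and $\mu_b=\mu_c X_{b,c}$. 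The right-hand side is closed because $X_{a,c}$ is a closed map and $X_{b,c}$ is continuous.

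The main obstacle is the topology: the colimit is taken in CGWH spaces, where the weak Hausdorffification could in principle alter the set-level description of the quotient. This is exactly why I would rely on \cite{BL23} for that step, where it is handled by showing the closed-filtration quotient is already weak Hausdorff.
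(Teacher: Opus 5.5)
Your proposal is correct and follows essentially the paper's route. The paper's proof is a one-line citation of the non-equivariant CGWH facts (Corollary~2.23 and \cite[Lemma 3.3]{SN09}, the same input behind the corresponding lemma in \cite{BL23}), and it leaves implicit the forgetful-functor reduction that you make explicit.

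Incidentally, your direct argument does not need that citation to settle the weak Hausdorff issue. Once each $\mu_b$ is a closed embedding into the colimit $L$ formed in \Top (which is automatically a $k$-space), take any map $u\colon K\to L$ from a compact Hausdorff space. Then $\mu_b^{-1}(u(K))$ is the image of the compact Hausdorff subspace $u^{-1}(\mu_b(X_b))\subseteq K$ under the continuous map $\mu_b^{-1}\circ u$ into the weak Hausdorff space $X_b$, hence closed. So $u(K)$ is closed in $L$, and $L$ is already the CGWH colimit.
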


    \begin{proof}
    This is a consequence of Corollary $2.23$ and \cite[Lemma 3.3]{SN09}.
    
    \end{proof}
    
The following notations are adapted from \cite{CSZ10} in our equivariant setting.
\begin{definition}
    For a directed set $\I$, a functor $X \colon \I \to\GTop$ is called \emph{$1$-critical} if $X$ is a closed filtration and for each $x\in \colim~X$, the set 
    $$S_x \coloneq \{ a\in \I \mid x\in \mbox{image} (\mu_a: X_a\to \colim X)\}$$ 
has a minimum element. 
\end{definition}

\begin{definition}\label{d-birth-index}
    For any $1$-critical diagram $X$ and $x \in \colim ~X$, the minimum element of $S_x$ is called \emph{birth index} of $x$. This defines a function (set map) $$\zeta^X \colon \colim~ X \to \I$$ sending each element $x \in \colim~X$ to its birth index.
\end{definition}

Note that the map $\zeta^X$ satisfies $\zeta^X(gx) = \zeta^X(x)$ since the birth index of $x$ and $gx$ should be same for any $g \in G$. In other words, the function $\zeta^X$ is equivariant with the trivial $G$-action on the index set $\I$. 
    \begin{proposition}\label{Prop_1 critical}
        For any directed set {$\I$}, each cofibrant diagram in {$\GTI$} is $1$-critical.
    \end{proposition}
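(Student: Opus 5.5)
The plan is to reduce the claim to the non-equivariant statement \cite[Proposition 5.2]{BL23} (via \cite{CSZ10}) using the forgetful functor $U\colon \GTI \to \TI$. This is the same pattern the paper used to show that cofibrant diagrams are closed filtrations. The key observation is that neither the closed-filtration condition nor the existence of a minimum of $S_x$ involves the $G$-action.

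First, let $X\in\GTI$ be cofibrant. By the earlier proposition, $X$ is a closed filtration. In that proof we also saw that $U$ sends the generating cofibrations $r\odot(G/H\times S^{n-1})\to r\odot(G/H\times D^n)$ to coproducts of generating cofibrations of $\TI$. Since $U$ is a left adjoint to the cofree/coinduction functor, it preserves the pushouts, transfinite compositions and retracts out of which cofibrant objects are built. Hence $U(X)$ is cofibrant in $\TI$.

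Second, I compare colimits. For $G$ finite, $U\colon\GTop\to\Top$ preserves colimits: the colimit of $G$-spaces is the colimit of the underlying spaces with the induced action. So $U(\colim X)=\colim U(X)$, and the canonical maps $\mu_a$ of the two diagrams agree on underlying spaces. Consequently, for each $x\in\colim X$ the set $S_x=\{a\in\I \mid x\in\operatorname{image}(\mu_a)\}$ is literally the same set whether it is computed for $X$ or for $U(X)$.

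Third, I apply the non-equivariant result: cofibrant diagrams in $\TI$ over a directed set are $1$-critical. Then each $S_x$ has a minimum, so $X$ is $1$-critical.

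The main obstacle is whether the non-equivariant result is available in exactly this form. If it is not, I would prove the minimum directly by a cell induction. Write $X$ as a transfinite composition of pushouts of the generating cofibrations. Each cell $r\odot(G/H\times D^n)$ contributes points whose interiors are born exactly at index $r$. Then show, by induction along the cell structure and using that the $\mu_a$ are closed inclusions (Lemma \ref{lemma Closed inclusion}), that each point of $\colim X$ lies in the interior of a unique cell. Its birth index is the index $r$ of that cell, and this $r$ is the minimum of $S_x$. Retracts need separate care: if $X$ is a retract of a cell diagram $C$ via $i\colon X\to C$ and $p\colon C\to X$, then $S_x^X=S_{i(x)}^C$. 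The inclusion $\subseteq$ holds because $i$ is natural. For $\supseteq$, if $i(x)=\mu_a^C(c)$, then applying $p$ gives $x=\mu_a^X(p_a c)$. So the minimum transfers from $C$ to $X$.
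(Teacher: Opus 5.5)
Your proposal is correct and follows essentially the same route as the paper. Both arguments forget the $G$-action, observe that $U(X)$ is cofibrant in $\TI$ and that $U$ preserves the colimit together with the canonical maps $\mu_a$ (so the sets $S_x$ coincide), and then invoke the non-equivariant result, which the paper cites as \cite[Proposition 5.4]{BL23}. Your remark that $U$ is a left adjoint sending generating cofibrations to coproducts of generating cofibrations makes explicit why $U(X)$ is cofibrant, a step the paper simply asserts.
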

    
\begin{proof}
    Let $X$ be a cofibrant diagram in {$\GTI$}. The forgetful functor $U \colon \GTI \to \TI$ on $X$ gives us a cofibrant diagram in $\Top$. Since any cofibrant diagram in $\Top$ is $1$-critical, see \cite[Proposition 5.4]{BL23}, $U(X)$ is $1$-critical.
    The colimit in $\GTI$ is the colimit in $\TI$ with a canonical group action, the birth index for the diagram in $\Top$ gives the birth index for the diagram in $\GTop$. Hence, $X$ is $1$-critical.
    
\end{proof}

Let $\I$ be a directed set. We define a category $\textbf{F}$ with 
    \begin{align*}
        \text{ob}(\textbf{F}) & \coloneq \{ \gamma^S \colon S \to \I \mid S \in \GTop ,~~ \gamma^S(gx) = \gamma^S(x)\}, \text{ and} \\ \text{hom}_{\textbf{F}}(\gamma^S, \gamma^T) & \coloneq \{f \in \hom_{\GTop}(S, T) \mid \gamma^T\circ f \le \gamma^S \}.
    \end{align*}
Let $\GTIC$ denote the full subcategory of $\GTI$ whose objects are the $1$-critical diagrams. 
From the functoriality of colimits, we can say that, for diagrams $X, Y\colon \I \to \GTop$, a natural transformation $f\colon X \to Y$ induces a $G$-map $\colim~ f \colon \colim~ X \to \colim~ Y.$
If $X,Y$ are $1$-critical, then $\zeta^Y \circ \colim~f \le \zeta^X$. Thus, we have a functor
    $$\fcolim \colon \GTIC \to \textbf{F}, \quad \quad \text{defined by } X \mapsto \zeta^X, $$ (see Definition \ref{d-birth-index}).
We have another functor 
    $$\mathscr{S} \colon\textbf{F} \to \GTI, \quad \quad \text{defined by } \mathscr{S}(\gamma^{S})_a\coloneq \{y\in S \mid \gamma^S(y)\le a \}.$$ 
This is a $G$-space since $\gamma^S$ has the property $\gamma^S(gx) = \gamma^S(x)$.
Clearly, $ \fcolim \circ \mathscr{S} = Id_{\textbf{F}}.$

\begin{proposition}\label{Prop_Id}
    For a directed set $\I$, we have a natural isomorphism ${\mathscr{S} \circ \fcolim \cong {Id}_{{{\GTI}}_{crit}}}$.
\end{proposition}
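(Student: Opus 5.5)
We construct, for each $1$-critical diagram $X\in\GTIC$, an explicit isomorphism $\eta_X\colon X\to \mathscr{S}(\fcolim X)=\mathscr{S}(\zeta^X)$ and then check naturality. For $a\in\I$, the space $\mathscr{S}(\zeta^X)_a=\{x\in\colim X\mid \zeta^X(x)\le a\}$ is given the subspace topology from $\colim X$. It is a $G$-subspace because $\zeta^X(gx)=\zeta^X(x)$. We define $(\eta_X)_a\coloneq\mu_a\colon X_a\to\colim X$.

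First we show that $\mu_a$ lands in $\mathscr{S}(\zeta^X)_a$ and is onto it. If $x=\mu_a(y)$, then $a\in S_x$, so $\zeta^X(x)=\min S_x\le a$. Conversely, suppose $\zeta^X(x)=b\le a$. Then $x=\mu_b(y)$ for some $y$, so $x=\mu_a(X_{b,a}(y))$ by the cocone relation. Hence $\mu_a(X_a)=\mathscr{S}(\zeta^X)_a$. The map is $G$-equivariant because $\mu_a$ is a $G$-map. Compatibility with the structure maps is the identity $\mu_b\circ X_{a,b}=\mu_a$, and on the $\mathscr{S}$ side the structure maps are inclusions. So $\eta_X$ is a natural transformation $X\to\mathscr{S}(\zeta^X)$ in $\GTI$.

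Second, we show that each $(\eta_X)_a$ is a homeomorphism onto its image. Here we use Lemma~\ref{lemma Closed inclusion}: since $X$ is a closed filtration over a directed set, $\mu_a$ is a closed inclusion, that is, an injective closed map. An injective closed continuous map is a homeomorphism onto its image, and by the first step that image is exactly $\mathscr{S}(\zeta^X)_a$ with the subspace topology. The inverse is then automatically equivariant, so $\eta_X$ is an isomorphism in $\GTI$. One technical point is that in CGWH spaces the subspace should carry the $k$-ification of the subspace topology. Since $\mu_a(X_a)$ is closed in $\colim X$, that subspace is already compactly generated, so this causes no problem.

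Third, we check naturality in $X$. Let $f\colon X\to Y$ be a morphism of $1$-critical diagrams. Then $\mathscr{S}(\fcolim f)_a$ is the restriction of $\colim f$ to $\mathscr{S}(\zeta^X)_a$, which lands in $\mathscr{S}(\zeta^Y)_a$ because $\zeta^Y\circ\colim f\le\zeta^X$. The universal property of the colimit gives $\colim f\circ\mu^X_a=\mu^Y_a\circ f_a$, which is precisely the naturality square for $\eta$.

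We expect the second step to be the main obstacle. All of the topological content sits in Lemma~\ref{lemma Closed inclusion}, which rests on the cited results of \cite{SN09}, together with the care needed about topologies in the CGWH category. The remaining steps are formal consequences of the definitions of birth index and colimit.
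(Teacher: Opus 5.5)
Your proposal is correct and follows essentially the same route as the paper. Both arguments identify $\mathscr{S}(\zeta^X)_a$ with $\operatorname{im}\mu_a$, use Lemma~\ref{lemma Closed inclusion} to see that $\mu_a$ is a homeomorphism onto that image, and assemble via $\mu_b\circ X_{a,b}=\mu_a$. You also supply details the paper passes over: the cocone argument behind $\zeta^X(x)\le a\Rightarrow x\in\operatorname{im}\mu_a$, the explicit naturality in $X$ from $\colim f\circ\mu^X_a=\mu^Y_a\circ f_a$, and the observation that a closed subspace needs no $k$-ification.
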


\begin{proof}
Let $X \in \GTIC$. First, we show 
    $$(\mathscr{S} \circ \fcolim~ X)_a = \text{im} ~ \mu_a,~~~a\in \text{\I}.$$
 Recall
    $$X \xrightarrow[]{\hspace{.3mm} \fcolim \hspace{.3mm}} \zeta^X \xrightarrow[]{\quad \mathscr{S} \quad}  \{({\zeta^X})^{-1}(-\infty,a]\}_{a\in I} \quad \text{ and } \quad X_a \lhook\joinrel\xrightarrow{\hspace{.4mm} \mu_a \hspace{.4mm}} \colim ~X.$$
Consider $x \in \text{im}~\mu_a$. Then $x \in X_a$, hence $\zeta^X(x)\leq a$ and $x\in({\zeta^X})^{-1}(-\infty,a]= (\mathscr{S} \circ \fcolim~ X)_a$. For the converse part, assume $x\in (\mathscr{S} \circ \fcolim ~X)_a$, then $x\in ({\zeta^X})^{-1}(-\infty,a]$ and $\zeta^X(x)\leq a$. Hence $x\in X_a$ and $x\in \text{im}~\mu^X_a$. Since $X$ is a closed filtration, by Lemma~\ref{lemma Closed inclusion}, $\mu^X_a$ is a closed inclusion, and hence, $\mu^X_a$ is a homeomorphism onto its image. 
For $a\le b \in \I$, we have $\mu^X_b\circ X_{a,b} = \mu^X_a$. Thus, these homeomorphisms define a natural isomorphism $\mu^X\colon X \to \mathscr{S} \circ \fcolim~X$. Assembling the natural isomorphisms $\{\mu^X\}_{X\in\GTIC}$ on $X$, we get a natural isomorphism 
    $$\text{Id}_{\GTIC} \xrightarrow{\hspace{.2mm} \cong \hspace{.2mm}} \mathscr{S} \circ \fcolim.$$       
\end{proof}  

The following Proposition is the key ingredient for our university result. It is an equivariant analogue of \cite[Proposition 5.7]{BL23}

\begin{proposition}\label{Stability of d_{GHI}}
    Let $X,Y \in$ {$\GTR$} are $(G,\delta)$-interleaved. Then there exist $T \in$ {$\GTop$} and equivariant functions $\gamma^X,\gamma^Y\colon T \to \RR$ (with trivial $G$ action on $\RR$) satisfying $$\mathscr{S} (\gamma^X)\simeq_G X,~ \mathscr{S} (\gamma^Y)\simeq_G Y, \quad\text{and}\quad d^G_\infty(\gamma^X,\gamma^Y)\le \delta.$$
\end{proposition}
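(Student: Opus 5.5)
We follow the strategy of \cite[Proposition 5.7]{BL23}: first replace the interleaving by a cofibrant one, then use the equivalence between $1$-critical diagrams and the category $\textbf{F}$ from Proposition \ref{Prop_Id} to read off the desired functions. The point is that a $\delta$-interleaving between $X$ and $Y$ is a functor $Z\colon \mathcal{I}^{\delta}\to \GTop$ with $Z\circ E^0=X$ and $Z\circ E^1=Y$. The poset $\mathcal{I}^{\delta}$ is a directed set: given $(r,\alpha),(s,\beta)$, the object $(\max(r,s)+\delta,0)$ lies above both.

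\textbf{Step 1: cofibrant replacement.} Take a cofibrant replacement $QZ\xrightarrow{\simeq_G} Z$ in the projective model structure on $\GTop^{\mathcal{I}^{\delta}}$. We check that the restriction functors $(E^i)^*\colon \GTop^{\mathcal{I}^{\delta}}\to \GTR$ preserve objectwise $G$-weak equivalences, which is clear. Hence $QZ\circ E^0\simeq_G X$ and $QZ\circ E^1\simeq_G Y$.

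\textbf{Step 2: birth-index function.} By Proposition \ref{Prop_1 critical}, $QZ$ is $1$-critical. Set $T:=\colim\, QZ\in\GTop$, and let $\zeta=\zeta^{QZ}\colon T\to \mathcal{I}^{\delta}$ be the birth-index function, which is $G$-invariant. Write $\zeta(x)=(\rho(x),\alpha(x))$ and define
$$\gamma^X(x)=\begin{cases}\rho(x) & \alpha(x)=0,\\ \rho(x)+\delta & \alpha(x)=1,\end{cases}\qquad \gamma^Y(x)=\begin{cases}\rho(x)+\delta & \alpha(x)=0,\\ \rho(x) & \alpha(x)=1.\end{cases}$$
Both are $G$-invariant, since $\zeta$ is, and clearly $|\gamma^X-\gamma^Y|\le\delta$. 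Thus $d^G_\infty(\gamma^X,\gamma^Y)\le\delta$.

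\textbf{Step 3: identifying the sublevel filtrations.} We show $\mathscr{S}(\gamma^X)\cong \colim$ restricted appropriately. More precisely, by Proposition \ref{Prop_Id},
$$QZ_{(r,0)}\cong \mathscr{S}(\zeta)_{(r,0)}=\{x : \zeta(x)\le (r,0)\}.$$
Here $\zeta(x)\le (r,0)$ holds iff either $\alpha(x)=0$ and $\rho(x)\le r$, or $\alpha(x)=1$ and $\rho(x)+\delta\le r$. This is exactly the condition $\gamma^X(x)\le r$. So $\mathscr{S}(\gamma^X)\cong QZ\circ E^0$, and the isomorphism is natural in $r$ because it is induced by the maps $\mu_{(r,0)}$. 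Symmetrically, $\mathscr{S}(\gamma^Y)\cong QZ\circ E^1$. Combining with Step 1 gives $\mathscr{S}(\gamma^X)\simeq_G X$ and $\mathscr{S}(\gamma^Y)\simeq_G Y$.

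\textbf{Main obstacle.} We expect the hardest part to be the setup of Step 3: verifying that the order relation in $\mathcal{I}^{\delta}$ translates exactly into the sublevel conditions, and that the colimit over $\mathcal{I}^{\delta}$ carries the correct topology, so that the maps $\mu_a$ are closed inclusions by Lemma \ref{lemma Closed inclusion}. In the non-equivariant case this is handled in \cite{BL23}. Equivariance adds nothing essential, because colimits, the cofibrant replacement and the birth index are all compatible with the $G$-action.
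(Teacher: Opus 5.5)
Your proposal is correct and is essentially the paper's proof, which is itself modeled on \cite[Proposition 5.7]{BL23}. Your explicit $\gamma^X,\gamma^Y$ coincide with the paper's $\zeta^{QZ\circ E^0},\zeta^{QZ\circ E^1}$. The only variation is how you get $\mathscr{S}(\gamma^X)\cong QZ\circ E^0$: you restrict the isomorphism $QZ\cong\mathscr{S}(\zeta^{QZ})$ of Proposition \ref{Prop_Id} and unwind the order on $\mathcal{I}^{\delta}$, whereas the paper uses finality of $E^0,E^1$ and $1$-criticality of the restricted diagrams.

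One caveat concerns $\delta=0$. There $\mathcal{I}^{0}$ is not a poset, since $(r,0)\le(r,1)\le(r,0)$, so your claim that it is a poset fails and the minimum of $S_x$ is not unique. Consequently $\zeta^{QZ}$ and $\alpha(x)$ are not well defined. Handle this case separately, as the paper does: $X\cong Y$, so take $T=\colim QX$ and $\gamma^X=\gamma^Y=\zeta^{QX}$.
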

\begin{proof}
    For a small category $\I$, let $\Q$ denote a cofibrant replacement functor in the projective model structure on $\GTI$.
    We treat the cases for $\delta=0$ and $\delta>0$ separately.
    
   Let $\delta=0$. Thus, we have an isomorphism $X \to Y$. Take $T \coloneq \colim~ \Q X$. Note that
   $\Q X$ is $1-$critical, by Proposition \ref{Prop_1 critical}. 
   Now let $$\gamma^X =\gamma^Y \coloneq \zeta^{\Q X}.$$
   Since $\mathbb{R}$ with its total order is a directed set, Proposition \ref{Prop_1 critical} and Proposition \ref{Prop_Id} imply that $\mathscr{S} (\gamma^X)= \mathscr{S} (\gamma^Y) \cong \Q X$. 
   Using the weak equivalence $\Q X \to X$ and the isomorphism $X\to Y$, we get our desired result for $\delta=0$ as 
        $$\mathscr{S} (\gamma^X)\cong \Q X \simeq X \quad \text{and} \quad \mathscr{S} (\gamma^Y) \cong \Q X \simeq X \cong Y.$$
   
   Now let $\delta>0$. Recall the definition of the interleaving category $\mathcal{I}^{\delta}$ and the functors $E^0, E^1\colon\mathbb{R} \to \mathcal{I}^{\delta}$ from Subsection \ref{Subsec_Homotopy Interleaving}.
   Note that $\mathcal{I}^{\delta}$ is a poset category and the underlying poset is a directed set for $\delta>0$. Since $X$ and $Y$ are $\delta$-interleaved, there exists a functor $Z \colon \mathcal{I}^{\delta} \to \GTop$ such that $Z\circ E^0 = X$ and $Z\circ E^1= Y$.
   Now we define $T \coloneq \colim~ \Q Z$ and show $T$ satisfies our desired property.
   
   Note that $E^0$ and $E^1$ both are final functors, see \cite[Section 8.3]{ER14}. Hence, we have a canonical identifications of $\colim~(\Q Z \circ E^0)$ and $\colim~(\Q Z \circ E^1)$ with $T$ such that for each $r\in \mathbb{R}$,
        $$\mu^{\Q Z}_{(r,0)} = \mu^{\Q Z \circ E^0} _r \quad \text{and} \quad \mu^{\Q Z}_{(r,1)} = \mu^{\Q Z \circ E^1} _r.$$
    We claim that $\Q Z \circ E^0$ and $\Q Z \circ E^1$ are both $1$-critical.
    Without loss of generality, we show this for $\Q Z \circ E^0$.
    By Proposition \ref{Prop_1 critical}, $\Q Z $ is $1$-critical.
    For each $a \in \mathcal{I}^\delta$, $\mu ^{\Q Z}_a\colon (\Q Z)_a \to T $ is a closed inclusion, by Proposition \ref{lemma Closed inclusion}.
    Therefore, for each $r \in \mathbb{R}$, $\mu^{\Q Z \circ E^0}_r \colon (\Q Z \circ E^0)_r \to T$ is a closed inclusion.
    Since $\Q Z$ is $1$-critical, for each $z\in T$, there exists a minimum element $(r,j) \in \text{ob}~ \mathcal{I}^\delta$ such that $z \in \text{im}~\mu^{\Q Z}_{(r,j)} $.
    We then have 
        $$r+j\delta = \text{min}~ \{s\in \mathbb{R} \mid z\in \text{im}~ \mu^{\Q Z \circ E^0}_s \}.$$
    Thus $\Q Z \circ E ^0$ is $1$-critical.
    Similarly, $\Q Z \circ E ^1$ is $1$-critical.
    We now define $\gamma^X , \gamma^Y \colon T \to \mathbb{R}$ by
        $$ \gamma^X = \zeta^{\Q Z \circ E^0} \quad \text{and} \quad \gamma^Y = \zeta^{\Q Z \circ E^1}.$$
     By Proposition \ref{Prop_Id}, $\mathscr{S} (\gamma^X)\cong \Q Z \circ E^0$, $\mathscr{S} (\gamma^Y)\cong \Q Z \circ E^1.$
     Also, we have a weak equivalence $\Q Z \to Z$ by the construction. A restriction of that provides us with the weak equivalences $\Q Z \circ E^0 \to X$ and $\Q Z \circ E^1 \to Y.$
     Hence, $$\mathscr{S} (\gamma^X)\cong_G \Q Z \circ E^0 \simeq_G X \quad \text{and} \quad \mathscr{S} (\gamma^Y)\cong_G \Q Z \circ E^1 \simeq_G Y.$$

     Now we show $d^G_\infty(\gamma^X,\gamma^Y)\le \delta$ to complete the proof.
     Let $z \in \colim~ \Q Z$. Then there is a minimum index $(r,j) \in \text{ob}~ \mathcal{I}^ \delta $ such that $z \in \text{im}~ \mu^{\Q Z}_{(r,j)}$.
     if $j=0$ then $\gamma^X(z)=r \text{ and } \gamma^Y(z)=r+\delta$. On the other hand, if $j=1$ then $\gamma^Y(z)=r \text{ and } \gamma^X(z)=r+\delta$.
     In either case, we have $|\gamma^X(z) - \gamma^Y(z)| = \delta.$
     This holds for all $z \in T$. We have $d^G_\infty(\gamma^X,\gamma^Y)\le \delta$.
\end{proof}

We conclude this section with the universality of the $G$-homotopy interleaving distance $d^G_{\HI}$.

\begin{theorem}\label{Th_Universality}
    Let $d^G$ be a $G$-stable and $G$-homotopy invariant  on $\GTR$. Then $d^G \leq d^G_{\HI}$.
\end{theorem}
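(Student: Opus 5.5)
The plan is to follow the argument of \cite[Theorem 1.9]{BL23}, combining Proposition \ref{Stability of d_{GHI}} with the two defining properties of $d^G$. Fix $X, Y \in \GTR$. If $d^G_{\HI}(X,Y)=\infty$ there is nothing to prove. Otherwise we take any $\delta$ with $d^G_{\HI}(X,Y) < \delta$; it suffices to show $d^G(X,Y)\leq \delta$ and then let $\delta$ decrease to $d^G_{\HI}(X,Y)$. By Definition \ref{Def_homotopy interleaving}, and since the set of admissible interleaving parameters is upward closed (an $\epsilon$-interleaving can be composed with structure maps to give an $\epsilon'$-interleaving for $\epsilon'\ge\epsilon$), there are $X' \simeq_G X$ and $Y'\simeq_G Y$ that are $(G,\delta)$-interleaved by $G$-equivariant natural transformations. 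In other words, there is a functor $Z\colon \mathcal{I}^\delta \to \GTop$ with $Z\circ E^0 = X'$ and $Z\circ E^1=Y'$.

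Next we apply Proposition \ref{Stability of d_{GHI}} to the pair $X', Y'$. This produces $T\in\GTop$ and $G$-invariant functions $\gamma^X,\gamma^Y\colon T\to\RR$ satisfying
$$\mathscr{S}(\gamma^X)\simeq_G X', \qquad \mathscr{S}(\gamma^Y)\simeq_G Y', \qquad d^G_\infty(\gamma^X,\gamma^Y)\le\delta.$$
Because $\simeq_G$ is defined by zigzags, it is transitive, so $\mathscr{S}(\gamma^X)\simeq_G X$ and $\mathscr{S}(\gamma^Y)\simeq_G Y$. By $G$-homotopy invariance \eqref{Eq_G homotopy invariant} we get $d^G(X,\mathscr{S}(\gamma^X))=0$ and $d^G(Y,\mathscr{S}(\gamma^Y))=0$. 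By $G$-stability \eqref{Eq_G stable} we get $d^G(\mathscr{S}(\gamma^X),\mathscr{S}(\gamma^Y))\le d^G_\infty(\gamma^X,\gamma^Y)\le\delta$. The triangle inequality for the distance $d^G$ then gives
$$d^G(X,Y)\le d^G(X,\mathscr{S}(\gamma^X)) + d^G(\mathscr{S}(\gamma^X),\mathscr{S}(\gamma^Y)) + d^G(\mathscr{S}(\gamma^Y),Y)\le \delta.$$

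The real content has already been done in Proposition \ref{Stability of d_{GHI}}, so what remains is bookkeeping. The point I would check most carefully is that Definition \ref{Def_stable and homotopy inv} allows $\gamma,\lambda$ to be arbitrary $G$-invariant set functions, not necessarily continuous. This matters because the birth-index functions $\zeta$ produced by Proposition \ref{Stability of d_{GHI}} are generally not continuous. The paper notes explicitly that $\gamma$ may fail to be continuous, so this holds, and $G$-invariance of $\zeta$ was observed after Definition \ref{d-birth-index}.

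I would also handle the boundary case $d^G_{\HI}(X,Y)=0$ without an infimum-attainment issue. For every $\delta>0$ the argument above gives $d^G(X,Y)\le\delta$, hence $d^G(X,Y)=0$. This means we never need the $\delta=0$ case of Proposition \ref{Stability of d_{GHI}}, although that case is also available there.
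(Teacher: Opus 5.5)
Your proof is correct and follows the paper's own argument. For each $\delta > d^G_{\HI}(X,Y)$ you pass to $(G,\delta)$-interleaved replacements $X'\simeq_G X$ and $Y'\simeq_G Y$, apply Proposition \ref{Stability of d_{GHI}}, combine $G$-stability, $G$-homotopy invariance and the triangle inequality, and then let $\delta$ decrease. Your explicit checks that interleaving parameters are upward closed and that the birth-index functions need not be continuous fill in points the paper's proof leaves implicit.
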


\begin{proof}
  Let $X,Y \in \GTop^\RR$ with $d^G_{\HI}=\epsilon=\inf \{ \delta \colon X \text{ and } Y \text{ are } (G, \delta) \text{-homotopy interleaved}\}$. Then $X,Y$ are $(G,\epsilon')$-homotopy interleaved for any $\epsilon'>\epsilon$. Hence, there exists $X',Y'\in \GTop$ such that $X'\simeq_G X$, $Y'\simeq_G Y$, and $X'$ and $Y'$ are ($G, \epsilon'$)-interleaved. Then by Proposition \ref{Stability of d_{GHI}} there exists a $G$-topological space $T$ and two $G$- maps $\gamma^{X'},\gamma^{Y'} \colon T \to \RR$ such that $\mathscr{S}(\gamma^{X'}) \simeq_G X'$ and $\mathscr{S}(\gamma^{Y'}) \simeq_G Y'$, and $d^G_\infty(\gamma^{X'},\gamma^{Y'}) \leq \epsilon'$. Consider $d^G$ be a any $G$-stable and $G$-homotopy invariant distance on $\GTop^\RR$. Hence, by the stability of $d^G$, we have $d^G(\mathscr{S}(\gamma^{X'}),\mathscr{S}(\gamma^{Y'})) \leq \epsilon'$. Using stability and triangle inequality of $d^G$ we have $d^G(X,Y) \leq \epsilon'$. Since $\epsilon'$ was arbitrarily, we have $d^G(X,Y)\leq \epsilon =d_{\HI}^G(X,Y)$. Therefore, the distance $d_{\HI}^G$ is univarsal.
  
\end{proof}
\end{mysubsection}


\section{Equivariant Persistent Whitehead Theorem}
In this section, we investigate the equivariant analogue of the persistent Whitehead theorem. 
The classical Whitehead theorem says that if a map $f$ between CW-complexes induces isomorphisms between the $n$-th homotopy groups for all $n$, then $f$ is a homotopy equivalence. Since the homotopy interleaving distance is a homotopy-theoretic notion of approximate weak equivalence for filtrations, it motivates the study of persistent version of the Whitehead theorem. 

First we need the persistent analogues of homotopy groups and homotopy equivalence. For a diagram of based spaces $X \in \Top_{*}^{\RR}$ and $n \geq 0$, the $n$-th based persistent homotopy group of $X$ is defined by the composite functor $\pi_nX$, see \cite[Definition 8.1]{BL23}.
For $\delta \geq 0$, two $\RR$-spaces $X$ and $Y$, a pair of morphisms $f \colon X \to Y(\delta)$ and $g \colon Y \to X(\delta)$ are $\delta$-\emph{homotopy equivalences} if 
    $$g(\delta) \circ f \simeq \Phi^{X, 2 \delta}  \quad \text{and} \quad f(\delta) \circ g \simeq \Phi^{Y, 2 \delta}$$
where $\Phi^{X,\delta} \colon X \to X(\delta)$ is the morphism that assemble the structure maps, $f(\delta) \colon X(\delta) \to Y(2\delta)$ is induced by the map $f$; and $g(\delta)$, $\Phi^{Y,\delta}$ are defined analogously, see \cite[Definition 8.4]{BL23}. For two $\RR$-spaces $X$ and $Y$, if such $f$ and $g$ exist for some $\delta \geq 0$, we call $X$ and $Y$ are $\delta$-\emph{homotopy equivalent}.
This leads to the natural query: for connected, cofibrant $X,Y \in \Top^{\RR}$, any $\delta \geq 0$, if a morphism $f \colon X \to Y(\delta)$ induces $\delta$-interleaving on all persistent homotopy groups, 
    \begin{enumerate}
        \item is $f$ a $\delta$-homotopy equivalence?

        \item are $X$ and $Y$ $\delta$-homotopy interleaved (see Definition \ref{def_hinterleaved})?
    \end{enumerate}
\noindent After producing counter example (\cite[Example 8.6]{BL23}) to such naive guesses, Blumberg--Lesnick conjectured a persistent analogue of the classical Whitehead theorem \cite[Conjecture 8.7]{BL23}.

A direct approach to prove this conjecture using the homotopy interleaving distance is technically challenging, since homotopy interleavings are defined through homotopy coherent diagrams rather than morphisms in the homotopy category. 
To overcome this difficulty, Lanari and Scoccola \cite{LS23} considered the interleaving distance in the homotopy category to establish a version of persistent Whitehead theorem. To investigate the equivariant setting, we  introduce $(G,\delta)$-interleaving distance in homotopy category in this article.

Although we are mostly interested in $G$-spaces, most of the results can also be obtained for $G$-simplicial sets since the equivariant realization--singular adjunction
    \[
    |-|:G\text{-sSet}\rightleftarrows G\text{-Top}:\operatorname{Sing}
    \]
is a Quillen equivalence, i.e., $G$-simplicial sets and $G$-spaces present equivalent equivariant homotopy theories. Throughout this section, we use the unified notion $G$-$\mathbb{S}$ for both $\GTop$ and $\GsS$. In particular, $\mathbb{S}$ will mean either $\Top$ or $\sSet$.

\begin{definition}[$G$-interleaving in homotopy category] \label{Defn_GIHC}
    Let $X, Y \in \GSR$. We call $X, Y$ are $(G,\delta)$-\emph{interleaved in homotopy category} if they are $\delta$-interleaved in $\Ho(\GSR)$. The $(G,\delta)$-\emph{interleaving distance} between $X$ and $Y$ in homotopy category is defined by 
        $$d_{\IHC}^G (X,Y) \coloneq \inf \{ \delta \colon X \text{ and } Y \text{ are } (G, \delta) \text{-interleaved in homotopy category}\}.$$
\end{definition}

In the classical Whitehead theorem, we need connectedness of a CW-complex. To establish an equivariant analogue of persistent Whitehead theorem, we first define $G$-connectedness of an equivariant CW-complex. 

\begin{definition}
    A $G$-CW complex $T$ is called $G$-\emph{connected CW-complex} if $T^H$ is non-empty and connected for all subgroups $H$ of $G$ where $T^H$ is the fixed-point set by $H$. We call a diagram $X \in \GTop^{\RR}$ is $G$-\emph{connected diagram} if $X_t$ is $G$-connected CW complex for all $t \in \RR$.
\end{definition}

Throughout this section, by a $G$-CW-complex, we mean a $G$-connected CW-complex unless otherwise mentioned.

The notion of equivariant persistent homotopy group is motivated by the non-equivariant version by Jardine\cite{Jar20}. Recall that for a pointed $G$-space $T$ and a subgroup $H$ of $G$, the $n$-\emph{th} $H$-\emph{equivariant homotopy group} of $T$ is the ordinary homotopy group of the fixed point set $T^H$, i.e., 
    $$\pi_n^H(T) \coloneq \pi_n(T^H).$$ 
The collection $\{\pi^H_n(T) \colon H \leq G\}$ is called the \emph{equivariant homotopy group} of $T$. A map $f \colon T \to T'$ of $G$-spaces is a \emph{weak equivalence} if for all $n$ and all subgroups $H$ of $G$, the induced maps $f_* \colon \pi_n(T^H) \to \pi_n(T'^H)$ are isomorphisms.

Let $X \in G$-$\mathbb{S}^{\RR}$. For any subgroup $H \leq G$ the $H$-\emph{equivariant persistent set} $\pi_0^H(X) \colon \RR \to \mbox{Set}$ is defined by $\pi_0^H \circ X$. Also, for $n \geq 1$, $t \in \RR$ and $x \in X_t$; the $n$-\emph{th} $H$-\emph{equivariant persistent homotopy group} of $X$ based at $x$ is the persistent group $\pi_n^H (X,x) \colon \RR \to \mbox{Grp}$ which is defined by 
    $$\pi_n^H (X,x)=
    \begin{cases}
        \{e\} &\text{when } s<t,\\
        \pi_{n}(X_s^H, X^H_{t,s}(x)) \quad &\text{when } s\geq t;
    \end{cases}$$
where $\{e\}$ is the trivial group and $X^H_{t,s}: X^H_t \to X^H_s$ are structure maps. Now we define equivariant $\delta$-interleaving in homotopy groups.

\begin{definition}
    Let $G$ be a finite group, $\delta$ be a non-negative real number and $f \colon X \to_{\delta} Y$ a map in $G$-$\mathbb{S}^{\RR}$.
    We say $f$ induces a \emph{$(G,\delta)$-interleaving in homotopy groups} if for all subgroups $H \leq G$,
        \begin{enumerate}[(i)]
            \item the induced map $\pi_0^H(f) \colon \pi_0^H(X) \to_{\delta} \pi_0^H(Y)$ is a part of a $\delta$-interleaving of persistent sets, 
            \item if for every $r \in \RR$, every $x\in X(r)$, and every $n\geq 1 \in \mathbb{N}$, the induced map 
                $$\pi_n^H(f)\colon \pi_n^H(X,x) \to_\delta \pi_n^H(Y,f(x))$$ 
            is part of a $\delta$-interleaving of persistence groups.
        \end{enumerate}
\end{definition}

A fibration of Kan complexes that induces an isomorphism in all homotopy groups has the right lifting property with respect to cofibrations, see \cite[Theorem I.7.10]{GJ99}. Following the ideas used in Jardine \cite{Jar20}, a persistent version has been proved by Lanari--Scoccola in \cite[Corollary 5.13]{LS23} where they proved for a fibration of fibrant objects (in $\mathbb{S}^{\RR}$) inducing a $\delta$-interleaving in homotopy groups, the lift exists up to a shift. 
Here we prove an analogous equivariant version of this result in Corollary \ref{cor:lift}. For this we need the following definitions.

\begin{definition}
    Let $X, Y\in G$-$\mathbb{S}^{\RR}$ and $n\in\mathbb{N}$. A map $h\colon X \to Y$ is an \emph{equivariant} $n$-\emph{dimensional extension} if there exist real numbers $\{r_i\in \RR\}_{i\in I}$ and subgroups of $\{H_j\mid {j\in J}\}$ and commutative squares given left below, that give rise to the pushout square on the right
    $$
        \xymatrix{
        G/H_j \times \partial D^n \ar[rr]^{\quad f_i} \ar@{^{(}->} [d]_{} & & X(r_i) \ar[d]^{h_{r_i}} & & \bigsqcup\limits_{i \in I} r_i \odot (\bigsqcup\limits_{j \in J}G/H_j \times \partial D^n) \ar[rr]^{\hspace{1.5cm} f} \ar@{^{(}->} [d]_{} & & X \ar[d]^{h}\\
        G/H_j \times D^n \ar[rr]_{\quad g_i} & & Y(r_i) & & \bigsqcup\limits_{i \in I} r_i \odot (\bigsqcup\limits_{j \in J}G/H_j \times D^n) \ar[rr]_{\hspace{1.5cm} g} & & Y.}
    $$
(See Subsection \ref{Diagram Category} for the notation $r\odot -$.)Here, the action on $\partial D^n$ and $D^n$ is assumed to be trivial. When $G$-$\mathbb{S}=\GTop$, $\partial D^n$ and $D^n$ denote the sphere $S^{n-1}$ and the disk $D^n$, respectively. When $G$-$\mathbb{S}=\GsS$, they denote the boundary $\partial\Delta^n$ and the standard simplex $\Delta^n$, respectively. One can visualize that $Y$ is obtained from $X$ by attaching equivariant $n$-cells at filtration level $\{r_i\colon i\in I\}.$
\end{definition}

\begin{definition}
    let $\iota \colon X\to Y$ be a projective cofibration in $G$-$\mathbb{S}^{\RR}$ and let $n\in\mathbb{N}$. We say $\iota$ is an \emph{equivariant} $n$-\emph{cofibration} if it factors as a composite of $n+1$ maps $f_0, f_1, \dots ,f_n$ with each $f_i$ an equivariant {$n_i$}-dimensional extension for some $n_i\in \mathbb{N}$. We say $X\in G$-$\mathbb{S}^{\RR}$ is \emph{equivariant} $n$-\emph{cofibrant} if the map $\emptyset \to X$ is an equivariant $n$-cofibration.  
\end{definition}

Now we investigate when a diagram on $G$-simplicial set or a $G$-space is equivariant $n$-cofibrant, see Proposition \ref{Prop_G-Sset-n-cofibrant} and Proposition \ref{Prop_G-CW-n-cofibrant}. To do that we need the notions of filtered $G$-simplicial set and persistent $G$-CW-complex. 

\begin{definition}
    Let $P$ be a poset $(P, \leq)$. Consider it with the trivial $G$-action. A \emph{$P$-filtered $G$-simplicial set} $(X, \beta)$ is a $G$-simplicial set $X$ equipped with $G$-maps $\beta_{n} \colon X^n \to P$, satisfying the following
        \begin{enumerate}[(i)]
            \item $\beta_{n-1}(d_i(\sigma)) \leq \beta_n (\sigma) ~\forall n \geq 1, \sigma \in X^n$ and a boundary map $d_i \colon X^n \to X^{n-1}$,
            
            \item $\beta_{n+1}(s_i(\sigma)) \leq \beta_n(\sigma) ~ \forall n \geq 0, \sigma \in X^n$ and a degeneracy map $s_i \colon X^n \to X^{n+1}$.
        \end{enumerate}
    
\end{definition}

Given a $P$-filtered $G$-simplicial set $(X, \beta)$, we get a persistent $G$-simplicial set $\widehat{({X}, \beta)} \in \GsS^P$ defined by $\widehat{({X} ,\beta)} (p)^n= \{ \sigma \in X^n \colon \beta_n(\sigma) \leq p\}$ for $p \in P$. The faces and degeneracies are given by restricting the ones of $X$.
We call a persistent $G$-simplicial set $Y\in \GsS^P$ \emph{filtered} if it is isomorphic to $\widehat{({X} ,\beta)}$ for some $(X,\beta)$. 
The following result gives a useful criterion for determining when a persistent $G$-simplicial set is filtered.

\begin{lemma}\label{Filterted iff condition}
    A persistent $G$-simplicial set $X\in {\GsS}^P$ is a filtered $G$-simplicial set if and only if the following conditions are satisfied:
    \begin{enumerate}[(i)]
        \item For any $r \leq r'$, the equivariant structure map $X_r \to X_{r'}$ is a monomorphism. In particular, we assume $X_r$ is a subsimplicial set of $X_{r'}$, up to an isomorphism.

        \item For any $r\in P$ and $\sigma \in X_r\in \GsS$, the set $\{t \in P\mid \sigma \in X_t\}$ has a minimum.
    \end{enumerate}
\end{lemma}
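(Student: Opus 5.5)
The plan is to prove the two implications separately, constructing the filtration $\beta$ explicitly in one direction and reading off (i) and (ii) from the definition of $\widehat{(X,\beta)}$ in the other.

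\emph{Necessity.} Suppose $X \cong \widehat{(Y,\beta)}$ for some $P$-filtered $G$-simplicial set $(Y,\beta)$.
\begin{enumerate}[(1)]
\item For $r \le r'$, the structure map is the inclusion $\{\sigma \in Y^n : \beta_n(\sigma)\le r\}\subseteq\{\sigma\in Y^n:\beta_n(\sigma)\le r'\}$. This inclusion is $G$-equivariant and levelwise injective, so it is a monomorphism, which gives (i).
\item For $\sigma\in X_r$ in degree $n$, the set $\{t\in P \mid \sigma\in X_t\}$ equals $\{t \mid \beta_n(\sigma)\le t\}$. Its minimum is $\beta_n(\sigma)$, which gives (ii).
\item Both properties are invariant under isomorphism of persistent $G$-simplicial sets, so they transfer to $X$.
\end{enumerate}

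\emph{Sufficiency.} Assume (i) and (ii).
\begin{enumerate}[(1)]
\item Using (i), identify each $X_r$ with a $G$-subsimplicial set of each $X_{r'}$ for $r\le r'$. Form $\overline{X} := \colim_{r\in P} X_r$, equivalently the union of the $X_r$.
\item Check that each $X_r \to \overline{X}$ is injective. Since the structure maps are monomorphisms, two simplices of $X_r$ can only be identified in the colimit if they are identified in some $X_{r'}$. If $P$ is not directed, I would argue via the explicit description of colimits of sets along injective transition maps. Alternatively, I would define $\overline{X}$ directly as the $G$-simplicial set whose $n$-simplices are $\bigcup_r X_r^n$ inside a common ambient set. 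This is legitimate because (i) is phrased as literal inclusions.
\item Define $\beta_n(\sigma) := \min\{t\in P\mid \sigma\in X_t\}$, which exists by (ii).
\item Check the axioms for $\beta$. If $\sigma\in X_t$, then $d_i\sigma\in X_t$ and $s_i\sigma\in X_t$, since $X_t$ is a simplicial subset. Hence $\beta_{n-1}(d_i\sigma)\le\beta_n(\sigma)$, and similarly $\beta_{n+1}(s_i\sigma)\le\beta_n(\sigma)$.
\item Check equivariance of $\beta$. Each $X_t$ is $G$-stable, so $\sigma\in X_t$ if and only if $g\sigma\in X_t$. Therefore $\beta_n(g\sigma)=\beta_n(\sigma)$, which is compatible with the trivial $G$-action on $P$.
\item Identify $X$ with $\widehat{(\overline{X},\beta)}$. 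If $\sigma \in X_p$, then $\beta_n(\sigma)\le p$. Conversely, if $\beta_n(\sigma)\le p$, then $\sigma\in X_{\beta_n(\sigma)}\subseteq X_p$. So $\widehat{(\overline{X},\beta)}(p)=X_p$. The structure maps on both sides are inclusions, so this equality is an isomorphism of persistent $G$-simplicial sets.
\end{enumerate}

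\emph{Main obstacle.} The main difficulty is making the phrase ``$X_r$ is a subsimplicial set of $X_{r'}$'' coherent across all of $P$. One needs a single ambient $G$-simplicial set in which all the $X_r$ sit compatibly, so that ``$\sigma\in X_t$'' is meaningful for every $t$. I would handle this by replacing $X$ up to isomorphism with the diagram of images in $\colim X$. The remaining care goes into checking injectivity into the colimit when $P$ is not directed. If that check fails in general, I would interpret (i) as part of the hypothesis, namely that the inclusions are literal subsets of a common set, which is how the statement indicates it should be read. Everything else is a routine verification.
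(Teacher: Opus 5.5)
Your proposal is correct and follows essentially the paper's argument. Necessity is read off from $\widehat{(Y,\beta)}$. Sufficiency takes the union $Z=\bigcup_r X_r$, which the paper, like your fallback, simply assumes sits in a common ambient set, defines $\beta_n$ as the minimal birth index, and checks $G$-invariance. You also verify the face/degeneracy inequalities and the identification $\widehat{(Z,\beta)}(p)=X_p$, which the paper leaves implicit.

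On your worry about non-directed $P$: (i) alone does not give injectivity into $\colim~X$, but (i) together with (ii) read in $\colim~X$ does. If $m$ is the minimum birth index of a class containing $z\in X_m$, injectivity of the structure maps propagates along every zigzag step, so any element of that class lying in $X_s$ equals $X_{m,s}(z)$.
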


\begin{proof}
   Let $X\in \GsS^{P}$ be filtered, and let $\sigma\in X_r$ be an $n$-dimensional $G$-simplex. Then $\beta_n(\sigma)\le r$. and also $\beta_n(g.\sigma)\le r$ for all $g\in G$. Consequently, for every $r'\ge r$, we also have $\beta_n(\sigma)\le r'$ and $\beta_n(g.\sigma)\le r'$ for all $g\in G$, so the structure map $X_r\to X_{r'}$ is equivariant and sends $g.\sigma$ to itself. This verifies (i). Moreover, since $X$ is filtered, the value $\beta_n(\sigma)$ is, by definition, the least parameter at which $\sigma$ appears. Thus, (ii) follows.
   
   Conversely, let $X\in \GsS^{P}$ satisfies (i) and (ii), we will show it is filtered. Since each $X_r$ is a $\GsS$, Condition (i) implies $Z=\bigcup_{r\in P} X_r$ is well defined simplicial set and inherits a well defined simplicial $G$-action. For $\sigma \in Z^n$ define $\beta_n(\sigma)\coloneq \text{min}\{r\in P \mid  \sigma \in X_r\}$, which is well-defined from (ii). It now remains to prove $\beta_n$ is a $G$-map. Observe that $\sigma \in X_t$ if and only if $g.\sigma\in X_t$ for all $g\in G$. Hence, $\{r\in P \mid  \sigma \in X_r\}=\{r\in P \mid  g.\sigma \in X_r\}$. Taking the minimum on both sides, we get $\beta_n(g.\sigma)=\beta_n(\sigma)$ for all $g\in G$. Thus, $\beta$ is $G$-invariant. Hence, $X$ is isomorphic to the filtered object $\widehat{(Z, \beta)}$.
   
\end{proof}

\begin{lemma}\label{cell attachment remains filtered}
    \begin{enumerate}
        \item A retract of a filtered $G$-simplicial set is filtered.

        \item If the domain of an equivariant cell attachment is a filtered $G$-simplicial set, the codomain is also a filtered $G$-simplicial set.

        \item Let $\zeta$ be a limit ordinal and let $Z_{\bullet} \colon \zeta \to \GsS^P$ be a diagram of persistent $G$-simplicial sets, where for each $\alpha < \zeta$ we have that the map $Z_{\alpha} \to Z_{\alpha+1}$ is an equivariant cell attachment.
        If $Z_{\alpha}$ is a filtered simplicial set for every $\alpha < \zeta$, then $Z_{\zeta}=\colim_{\alpha < \zeta} Z_{\alpha}$ is a $G$-filtered simplicial set.
    \end{enumerate}
\end{lemma}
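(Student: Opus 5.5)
The plan is to use Lemma \ref{Filterted iff condition} throughout. It lets us replace ``filtered'' by two concrete conditions: (i) all structure maps are monomorphisms, and (ii) every simplex has a minimal birth parameter. Each of the three parts then reduces to checking that (i) and (ii) are preserved. $G$-invariance of the birth function is automatic once (i) and (ii) hold, by the argument in the proof of that lemma.

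For (1), let $Y$ be a retract of a filtered $X$, with $i\colon Y\to X$ and $p\colon X\to Y$ satisfying $p\circ i=\mathrm{id}$.
\begin{itemize}
\item Condition (i): $i$ is an objectwise monomorphism because it has a left inverse. By naturality, $i_{r'}\circ Y_{r,r'}=X_{r,r'}\circ i_r$, and the right-hand side is a composite of monomorphisms. Hence $Y_{r,r'}$ is mono.
\item Condition (ii): for $\sigma\in Y_r$, put $t_0=\beta^X(i_r\sigma)$. Then $i_r\sigma\in X_{t_0}$, and applying $p_{t_0}$ places $\sigma$ in $Y_{t_0}$. Here we identify via the structure monos, using that $p$ commutes with the structure maps. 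Conversely, if $\sigma\in Y_t$ then $i\sigma\in X_t$, so $t\ge t_0$. Thus $t_0$ is the minimum of $\{t\mid\sigma\in Y_t\}$.
\end{itemize}

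For (2), consider the pushout of $X$ along $\bigsqcup_i r_i\odot(\bigsqcup_j G/H_j\times\partial\Delta^n)\to\bigsqcup_i r_i\odot(\bigsqcup_j G/H_j\times\Delta^n)$, with $X$ filtered. Pushouts in $\GsS^P$ are computed objectwise and levelwise in sets. So at level $t$, the codomain $Y_t$ is $X_t$ together with the nondegenerate simplices of the copies $G/H_j\times\Delta^n$ with $r_i\le t$ that are not in the boundary. The inclusions $X_t\to Y_t$ are monomorphisms, since pushouts of monomorphisms of simplicial sets are monomorphisms.
\begin{itemize}
\item Condition (i): the map $Y_t\to Y_{t'}$ is the identity on $X$-simplices after $X_t\hookrightarrow X_{t'}$, and is injective on the new simplices, which are indexed independently of $t$. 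So it is a monomorphism.
\item Condition (ii): a simplex of $Y$ either comes from $X$, with birth $\beta^X$, or is a new simplex from the cell indexed by $(i,j)$ and a coset/simplex, with birth exactly $r_i$. This is because $r_i\odot-$ is empty below $r_i$ and constant above it.
\end{itemize}
The $G$-action permutes cosets of $G/H_j$ within one cell, so it preserves births. This is where one must be careful that all copies of an orbit share the same $r_i$, and they do by construction.

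For (3), colimits over $\zeta$ are again computed objectwise and levelwise. Each $Z_\alpha\to Z_{\alpha+1}$ is an objectwise monomorphism by (2); at limit stages below $\zeta$ we use transfinite induction. Hence the maps $Z_\alpha\to Z_\zeta$ are objectwise monomorphisms, since filtered colimits of monos in sets are monos, and $(Z_\zeta)_t=\bigcup_\alpha (Z_\alpha)_t$.
\begin{itemize}
\item Condition (i): a structure map of $Z_\zeta$ is the union of the structure maps of the $Z_\alpha$. Two simplices with the same image would already be identified at some common stage $\alpha$, where the structure map is injective.
\item Condition (ii): a simplex $\sigma\in(Z_\zeta)_r$ lies in $(Z_\alpha)_r$ for some $\alpha$. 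We must show its birth in $Z_\alpha$ does not decrease at later stages. This holds because a cell attachment only adds new simplices and does not change the levels at which existing simplices appear: in the description above, $X_t=Y_t\cap X$. Hence $\sigma\in(Z_\zeta)_t$ if and only if $\sigma\in(Z_\alpha)_t$, and the minimum in $Z_\alpha$ works for $Z_\zeta$.
\end{itemize}

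The main obstacle is this ``no earlier birth later'' property, which is needed in both (2) and (3). I would isolate it first as a claim: for a cell attachment $X\to Y$, one has $X_t=Y_t\cap X$ inside the union. This follows from the explicit objectwise pushout description.
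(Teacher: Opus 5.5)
Your proposal is correct and follows essentially the same route as the paper: in each part you verify the two conditions of Lemma~\ref{Filterted iff condition}, namely that the structure maps are monomorphisms and that every simplex has a minimal birth index. You do this for retracts, cell attachments and transfinite compositions, exactly as the paper does. Your explicit objectwise pushout description and the isolated claim $X_t = Y_t\cap X$ fill in steps the paper only asserts: injectivity of $Y_{r,s}$ on the newly attached simplices in (2), and the fact that an existing simplex's birth index does not drop at later stages in (3). For (3), the claim should be carried through limit stages by the same transfinite induction you use for monomorphisms.
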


\begin{proof}
    Proof of $(1)\colon$ Let $X$ be a filtered $G$-simplicial set. $A$ is a retract of $X$. There exists $G$-equivariant morphisms $\kappa\colon X\to A$ and $\iota \colon A\to X$ such that $\kappa\circ \iota=\text{id}_A$. We verify conditions (i) and (ii) of Lemma \ref {Filterted iff condition} to prove $A$ is filtered. 
    Consider the following diagram
        $$
            \xymatrix{
            A_r \ar[rr]^{\iota_r} \ar[d]_{A_{r,s}} & & X_r \ar[d]^{X_{r,s}}\\
            A_s \ar[rr]_{\iota_s} & & X_s , 
            }
        $$
    where all $\iota_s,\iota_r$, and $X_{r,s}$ are monomorphisms and the diagram commutes. Thus, $A_{r,s}$ is a monomorphism for all $r \to s$ in $P$. Now, for condition (ii), consider $\sigma\in \bigcup_{r\in P} A_r$. Then $\iota(\sigma)\in \bigcup_{r\in P}X_r$. Since $X$ is filtered, the set 
        $S_{\iota(\sigma)}=\{t\in P\mid \iota(\sigma)\in X(t)\}$
    has a minimum, say $t_0$. Hence, $\iota(\sigma) \in X_{t_0}$. Since, $\kappa_{t_0} \colon X_{t_0} \to A_{t_0}$ is a retraction, $\kappa_{t_0}(\iota(\sigma))=\sigma \in A_{t_0}$. We have,
        $$ \{t\in P\mid \sigma\in A_t \} \subseteq \{t\in P\mid \iota(\sigma)\in X_t\}=S_{\iota(\sigma)}$$
    Thus, for every $G$-simplex $\sigma$ the set $\{t\in P\mid \sigma\in A_t\}$ has a minimum. Therefore, $A$ is filtered.

    Proof of $(2)\colon$ Let $X$ be a filtered $G$-simplicial set. Attaching an  equivariant cell to $X$ corresponding to a subgroup $H\leq G$ gives rise to the pushout square below:
        $$\xymatrix{
        r \odot (G/H \times \partial \Delta^n) \ar[rr]^{\hspace{.8cm}\phi_G} \ar@{^{(}->} [d]_{} & & X \ar[d]^{} & & \\
        r \odot (G/H \times \Delta^n) \ar[rr]_{} & & Y & & 
        }$$ 
     The left vertical inclusion is a level-wise $G$-monomorphism, and $\phi_G$ is a $G$-map. Hence, $Y$ inherits a canonical $G$-action. We know that equivariant monomorphisms are preserved under pushout in the category of $\GsS$. Thus, the induced map $X\to Y$ is also an equivariant monomorphism level-wise in $\GsS^P$. Since $X$ is filtered, the structural maps $X_r\to X_s$ are equivariant monomorphisms. This leads us to the commutative diagram below
        $$\xymatrix{
            X_r \ar[rr]^{a_r} \ar[d]_{X_{r,s}} & & Y_r \ar[d]^{Y_{r,s}}\\
             X_s \ar[rr]_{a_s} & & Y_s}$$
    where $a\colon X \to Y$ comes from the pushout are equivariant monomorphism. Therefore, each $Y_{r,s}$ is an equivariant monomorphism. 

    Consider $\sigma \in \bigcup_{r\in P} Y_r$. Then two cases may arise. First, let $\sigma\in X$ before the attachment, or it is a boundary simplex of the new cell that is glued to $X$ via $\phi_G$. Since $X$ is already filtered and the map $X\to Y$ is monomorphism, $\text{min}\{t\in P \mid \sigma \in X_t\}=\text{min}\{t\in P \mid \sigma \in Y_t\}$. 
    In the other case, let $\sigma$ be the nondegenerate  $n$-dimensional $G$-simplex of the new cell or a boundary simplex which is part of the interior of the new cell, or any degeneracy of such simplices, which arises from the equivariant cell attachment $s \odot (G/H \times \partial \Delta^n) \hookrightarrow s \odot (G/H \times \Delta^n)$. 
    In such cases, $s=\text{min}\{t\in P \mid \sigma \in Y_t\}$. So, in any case, the set $\{t\in P \mid \sigma \in Y_t\}$ has a minimum. Therefore, $Y$ is filtered.

    Proof of $(3)\colon$ Let $\zeta$ be a limit ordinal and let $X_\bullet \colon \zeta \to \GsS^P$ be a diagram of persistent $G$-simplicial sets. We show $X_\zeta = \text{colim}_{\gamma<\zeta}X_\gamma$ is filtered. Since each $X_\gamma$ is a $G$-simplicial set, the colimit $X_\zeta$ inherits a canonical $G$-action. Hence, $X_\zeta$ is a $G$-simplicial set and $X_\zeta(s)\to X_\zeta(t)$ is a $G$-map for all $s<t$. Now, we will show that $f_{st}\colon X_\zeta(s) \to X_\zeta(t)$ is an equivariant monomorphism. Let $a,b\in X_\zeta(s)$ such that $f_{st}(a) = f_{st}(b)$. It is given that each map $X_\gamma(s) \to X_{\gamma+1}(s)$ is an equivariant cell attachment, hence an equivariant monomorphism by $(2)$. Hence, $X_\zeta(s)= \bigcup_{\gamma< \zeta}X_\gamma(s)$. We have,  $a,b \in X_\zeta(s)= \bigcup_{\gamma< \zeta}X_\gamma(s)$. So, there exists $\gamma_1,\gamma_2$ such that $a\in X_{\gamma_1}(s)$ and $b\in X_{\gamma_2}(s)$. let $\gamma' = \text{max}\{\gamma_1, \gamma_2\}$. Hence, $a,b \in X_{\gamma'}(s)$. We also have $f_{st}(a) = f_{st}(b)\in X_\zeta(t)= \bigcup_{\gamma< \zeta}X_\gamma(t)$. So there exists $\gamma''$ such that $f_{st}(a) = f_{st}(b)\in X_{\gamma''}(t)$. Take $\gamma = \text{max}\{\gamma', \gamma''\}$. Hence, $a,b \in X_{\gamma}(s)$ and $f_{st}(a) = f_{st}(b)\in X_{\gamma}(t)$. Since $X\gamma$ is filtered, the map $X_\gamma(s)\to X_\gamma(t)$ is an equivariant monomorphism. Thus, we have $a=b$. Therefore, $f$ is an equivariant monomorphism.

    We now prove for an $G$-simplex $\sigma \in \bigcup_{r\in P} X_\zeta(r)$ the set $\{t\in P \mid \sigma \in X_\zeta(t)\}$ has a minimum. let $\sigma \in X_\zeta(s)$ for some $s\in P$. Now $X_\zeta(s)=\bigcup_{\gamma<\zeta}X\gamma(s)$. Hence, $\sigma \in X\gamma(s)$. Since $X_\gamma$ is filtered the set $\{t\in P \mid \sigma \in X_\gamma (t)\}$ is non-empty and has a minimum, say $t_0$. So, $\sigma$ first apper at $X_\gamma(t_0)$. Since, $X_\gamma(t_0)\to X_{\gamma+1}(t_0)$ is an equivariant monomorphism $\sigma$ will first appear in $X_\zeta(t_0)$. We also have $X\zeta(s) \to X_\zeta(t)$ is an equivariant monomorphism. Hence $t_0=\text{min}\{t\in P \mid \sigma \in X_\zeta(t)\}$. Therefore, $X_\zeta$ is filtered.
    
\end{proof}

The following proposition provides a recognition criterion for projective cofibrant persistent $G$-simplicial sets.

\begin{proposition}\label{Prop_filtered iff projective cofibrant}
    A persistent $G$-simplicial set in $\GsS^{\RR}$ is filtered iff it is cofibrant in the projective model structure on $\GsS^{\RR}$.
\end{proposition}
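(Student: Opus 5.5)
We prove the two implications separately, in the spirit of the non-equivariant argument of Lanari--Scoccola, using Lemma~\ref{Filterted iff condition} and Lemma~\ref{cell attachment remains filtered}.

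\emph{Cofibrant implies filtered.} The projective model structure on $\GsS^{\RR}$ is cofibrantly generated, with generating cofibrations $r\odot(G/H\times\partial\Delta^n)\hookrightarrow r\odot(G/H\times\Delta^n)$ for $r\in\RR$, $H\le G$, $n\ge 0$. By the small object argument, every cofibrant object is a retract of a cell complex, that is, of a transfinite composite $\emptyset=Z_0\to Z_1\to\cdots\to Z_\zeta$ in which each successor map is a pushout of a coproduct of generating cofibrations. The first step is to reduce to single-cell attachments. A pushout along a coproduct of generating maps can be well-ordered into a transfinite sequence of single equivariant cell attachments, since the cells attach to the previous stage independently. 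The empty diagram is trivially filtered. Transfinite induction then shows that every stage is filtered: successor stages use Lemma~\ref{cell attachment remains filtered}(2), and limit stages use Lemma~\ref{cell attachment remains filtered}(3). Finally, Lemma~\ref{cell attachment remains filtered}(1) handles the retract.

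\emph{Filtered implies cofibrant.} By Lemma~\ref{Filterted iff condition}, we may take $X=\widehat{(Z,\beta)}$ with $\beta$ a $G$-invariant map. The plan is to exhibit $\emptyset\to X$ as a relative cell complex built skeleton by skeleton. Suppose $\operatorname{sk}_{n-1}$ has been built. Choose representatives of the $G$-orbits of nondegenerate $n$-simplices $\sigma$ of $Z$, and let $H_\sigma$ be the stabilizer of $\sigma$. Each such orbit is the image of a $G$-map $G/H_\sigma\times\Delta^n\to Z$, sending $(gH_\sigma,\iota_n)\mapsto g\sigma$. The orbit becomes available exactly at level $\beta_n(\sigma)$; this is well defined because $\beta_n(g\sigma)=\beta_n(\sigma)$. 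The faces of $\sigma$ have $\beta\le\beta_n(\sigma)$, so the boundary lands in $\widehat{\operatorname{sk}_{n-1}}$ at level $\beta_n(\sigma)$. We therefore get a pushout square
\[
\coprod_{[\sigma]}\beta_n(\sigma)\odot(G/H_\sigma\times\partial\Delta^n)\to \widehat{\operatorname{sk}_{n-1}}
\]
along the generating cofibrations, whose pushout is $\widehat{\operatorname{sk}_n}$. Then $X=\colim_n\widehat{\operatorname{sk}_n}$ is a cell complex, and hence cofibrant.

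\emph{Main obstacle.} The hard step is verifying that this square really is a pushout in each $\GsS$ at every level $t$. Two things must hold. First, the nondegenerate $n$-simplices of $X_t$ not in $\operatorname{sk}_{n-1}$ must be exactly the orbits $[\sigma]$ with $\beta_n(\sigma)\le t$; this uses condition (ii) of Lemma~\ref{Filterted iff condition} together with the degeneracy condition on $\beta$, which ensures degenerate simplices appear no later than the simplices they come from. Second, the equivariant Eilenberg--Zilber lemma is needed: the map from $G/H_\sigma\times(\Delta^n\setminus\partial\Delta^n)$ to the interior of the orbit must be bijective. This holds because $H_\sigma$ is exactly the stabilizer, so $g\sigma=g'\sigma$ if and only if $gH_\sigma=g'H_\sigma$, and nondegenerate simplices have unique representations. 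Once this levelwise check is done, naturality in $t$ is automatic, because colimits in diagram categories are computed levelwise.
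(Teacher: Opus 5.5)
Your proposal is correct and takes the same route as the paper. Cofibrant $\Rightarrow$ filtered goes through retracts of transfinite cell complexes together with parts (1)--(3) of Lemma~\ref{cell attachment remains filtered}, and filtered $\Rightarrow$ cofibrant builds $X$ from equivariant cell attachments. The paper asserts the second direction in one line, and your skeleton-by-skeleton attachment of $\beta_n(\sigma)\odot(G/H_\sigma\times\Delta^n)$ over orbit representatives of nondegenerate $n$-simplices, like your reduction of coproduct pushouts to single-cell attachments in the first direction, supplies details the paper leaves implicit.
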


\begin{proof}
    We know that the cofibrant objects in a cofibrantly generated model category are precisely the retracts of transfinite composition of cell attachments, see\cite[Proposition 2.1.18(b)]{Hovey}. Let $X$ be projective cofibrant. Now, an empty diagram is always filtered. If we attach an $G$-cell, it's still filtered by Lemma \ref{cell attachment remains filtered} $(2)$. Then the transfinite composition of cell attachment and its retract are still filtered by \ref{cell attachment remains filtered}$(3)$ and $(1)$, respectively. Hence, $X$ is filtered. Conversely, if $X$ is filtered, we can build it from an empty diagram by a transfinite composition of equivariant cell attachments. Hence, $X$ is projective cofibrant. 
    
\end{proof}

\begin{definition}
A persistent $G$-simplicial set $X$ is said to be \emph{pointwise $n$-skeletal} if, for every index $i$, the $G$-simplicial set $X_i$ contains cells only up to dimension $n$. 
\end{definition}
The following result provides a class of examples of $n$-cofibrant persistent $G$-simplicial sets. It is an immediate consequence of Proposition \ref{Prop_filtered iff projective cofibrant}.

\begin{proposition}\label{Prop_G-Sset-n-cofibrant}
    Let $n\in \mathbb{N}$. If $X\in {\GsS}^\RR$ is cofibrant in the projective model structure and pointwise $n$-skeletal, then it is equivariant $n$-cofibrant.
\end{proposition}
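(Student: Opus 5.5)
By Proposition \ref{Prop_filtered iff projective cofibrant}, the plan is to treat $X$ as filtered and then build it in $n+1$ stages, one per simplex dimension. Concretely, fix an isomorphism $X\cong\widehat{(Z,\beta)}$, where $Z=\bigcup_{r}X_r$ is a $G$-simplicial set and $\beta$ is $G$-invariant. Since each $X_r$ is $n$-skeletal and $Z$ is the union of the $X_r$, the simplicial set $Z$ is also $n$-skeletal. For $0\le k\le n$, let $X^{(k)}\subseteq X$ be the persistent sub-$G$-simplicial set with $X^{(k)}_r=\operatorname{sk}_k X_r$, and put $X^{(-1)}=\emptyset$. Each $X^{(k)}$ is filtered, with the filtration given by restricting $\beta$. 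Moreover $X^{(n)}=X$, so
$$\emptyset=X^{(-1)}\to X^{(0)}\to\cdots\to X^{(n)}=X$$
is a composite of $n+1$ maps. It then suffices to show that each inclusion $X^{(k-1)}\to X^{(k)}$ is an equivariant $k$-dimensional extension.

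For the $k$-th step, let $N_k$ be the set of nondegenerate $k$-simplices of $Z$. The group $G$ acts on $N_k$, and I decompose $N_k$ into $G$-orbits. For each orbit, choose a representative $\sigma$ with stabilizer $H_\sigma$, and set $r_\sigma=\beta_k(\sigma)$; this value is constant on the orbit because $\beta$ is $G$-invariant. The representative gives a $G$-map $G/H_\sigma\times\Delta^k\to Z$ sending $(gH_\sigma,\iota_k)\mapsto g\sigma$, where $H_\sigma$ acts trivially on $\Delta^k$, exactly as in the definition of an extension. Since every face of $\sigma$ has filtration value at most $r_\sigma$, the boundary restricts to a map $G/H_\sigma\times\partial\Delta^k\to X^{(k-1)}_{r_\sigma}$, and the whole simplex lands in $X^{(k)}_{r_\sigma}$. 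By adjunction these maps give the morphisms $r_\sigma\odot(G/H_\sigma\times\partial\Delta^k)\to X^{(k-1)}$ and $r_\sigma\odot(G/H_\sigma\times\Delta^k)\to X^{(k)}$, which form a commutative square. Taking the coproduct over the orbit representatives produces the required square of the definition.

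The remaining point is that this square is a pushout. Pushouts in $\GsS^{\RR}$ are computed levelwise, and in $\sSet$ they are computed degreewise, so I check it at each $t\in\RR$. At level $t$ the square reduces to the classical fact that $\operatorname{sk}_k X_t$ is obtained from $\operatorname{sk}_{k-1}X_t$ by attaching one $\Delta^k$ along $\partial\Delta^k$ for each nondegenerate $k$-simplex of $X_t$. The equivariant orbit grouping contributes nothing new, because $\coprod_{\text{orbits}}G/H_\sigma\times\Delta^k\cong\coprod_{\tau\in N_k,\ \beta(\tau)\le t}\Delta^k$ as simplicial sets.

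I expect this identification to be the main obstacle, and I will check two things there. First, the nondegenerate $k$-simplices of $X_t$ must be exactly those $\tau\in N_k$ with $\beta(\tau)\le t$. This holds because the structure maps are monomorphisms and nondegeneracy is detected in $Z$. Second, the $G$-map $G/H_\sigma\times\Delta^k\to Z$ must be injective on nondegenerate $k$-simplices, meaning $g\sigma=\sigma$ exactly when $g\in H_\sigma$, which holds by the definition of the stabilizer. Once both are confirmed, each stage $X^{(k-1)}\to X^{(k)}$ is an equivariant $k$-dimensional extension, and the composite exhibits $\emptyset\to X$ as an equivariant $n$-cofibration.
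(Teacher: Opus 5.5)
Your proposal is correct and takes the same approach as the paper. You use Proposition \ref{Prop_filtered iff projective cofibrant} to view $X$ as filtered, then attach the equivariant cells skeleton by skeleton at their birth indices, which gives $n+1$ successive extensions. Your version is more complete than the paper's sketch: you make the cells $G/H_\sigma\times\Delta^k$ explicit via orbit representatives and stabilizers, and you actually verify the pushout levelwise by reducing to the classical skeletal pushout.
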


\begin{proof}
Since $X$ is a projective cofibrant, by Proposition \ref{Prop_filtered iff projective cofibrant}, $X$ is filtered. Hence, every $n$ simplex $\sigma$ has a value $\beta_n(\sigma)$. Recall that each simplex $\sigma \in \colim~ X$ has a minimum birth index. We start with an empty diagram. Then attach all $0$-dimension cells with respect to their birth indices. This is an equivariant $0$-dimensional extension which is denoted by $f_0$. 
Next, a similar attachment of $1$-cells respecting their birth indices gives us equivariant $1$-dimensional extension $f_1$. Continuing in a similar manner, the process ends at $n+1$-th step $f_n$ since the simplex is of dimension $n$. Hence $\sigma$ is equivariant $n$-cofibrant.

\end{proof}

\begin{example}\label{Eg_n cofibrant}
    Let $M$ be a metric space with an isometric $G$ action with exactly $n+1$ orbits. Then the Vietoris--Rips complex $\VR(M)$ inherits a simplicial $G$-action and it is equivariant $n$-cofibrant. 
\end{example}

In a simplicial set, if a $2$-cell is attached to the $0$-skeleton, its $1$-dimensional faces are necessarily present as part of the simplicial structure. In contrast, this need not hold for topological spaces: a $2$-cell can be attached directly to the $0$-skeleton, without any $1$-cells being attached first. Thus, cells in a topological space need not be attached in increasing order of dimension. Consequently, an analogue of Proposition \ref{Prop_G-Sset-n-cofibrant} cannot be expected to hold automatically in $G$-topological spaces. This motivates the following definition.

\begin{definition}
    Let $n\in \mathbb{N}$. A persistent $G$-topological space $X\in \GTI$ is an \emph{$n$-dimensional persistent $G$-CW-complex} if the map $\emptyset \to X$ can be factored as a composite of maps $f_0, \dots f_n$ where each $f_i$ is an equivariant $i$-dimensional extension for $i=0, \dots, n$.
\end{definition}

\begin{proposition}\label{Prop_G-CW-n-cofibrant}
    Every $n$-dimensional persistent $G$-CW-complex is equivariant $n$-cofibrant.
\end{proposition}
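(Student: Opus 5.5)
This is essentially a matter of unwinding definitions. By the definition of an $n$-dimensional persistent $G$-CW-complex, the map $\emptyset \to X$ factors as a composite
$$\emptyset = X^{(-1)} \xrightarrow{f_0} X^{(0)} \xrightarrow{f_1} X^{(1)} \to \cdots \xrightarrow{f_n} X^{(n)} = X,$$
where each $f_i$ is an equivariant $i$-dimensional extension. This is a composite of exactly $n+1$ maps, each an equivariant $n_i$-dimensional extension with $n_i=i$. So the factorization required in the definition of an equivariant $n$-cofibration is already in hand.

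The one remaining condition is that $\emptyset \to X$ be a projective cofibration in $\GTop^{\RR}$; the definition of equivariant $n$-cofibration presupposes this. The plan has two steps.

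First, show that each equivariant $i$-dimensional extension $h\colon A\to B$ is a projective cofibration. By definition, $h$ is a pushout of the map
$$\bigsqcup_{i\in I} r_i\odot\Big(\bigsqcup_{j\in J} G/H_j\times \partial D^n\Big)\hookrightarrow \bigsqcup_{i\in I} r_i\odot\Big(\bigsqcup_{j\in J} G/H_j\times D^n\Big).$$
Since $r\odot(-)$ commutes with coproducts, this map is a coproduct of the maps $r_i\odot(G/H_j\times S^{n-1})\to r_i\odot(G/H_j\times D^n)$. These are exactly the generating cofibrations of the projective model structure on $\GTop^{\RR}$ recalled in Section 2.

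Second, invoke closure properties. In any model category, cofibrations are closed under coproducts, pushouts and finite composition, since they are characterized by the left lifting property against acyclic fibrations. Hence each $f_i$ is a projective cofibration, and so is the composite $\emptyset\to X$.

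I do not expect a real obstacle. The only point needing care is the bookkeeping above: identifying the coproduct over $i$ and $j$ with a coproduct of generating cofibrations, and noting that the trivial action on $\partial D^n$ and $D^n$ makes $G/H_j\times \partial D^n\to G/H_j\times D^n$ literally a Bredon generating cofibration. With that in place, $X$ is equivariant $n$-cofibrant.
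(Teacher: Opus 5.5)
Your proof is correct and follows the paper's approach: the definition of an $n$-dimensional persistent $G$-CW-complex already supplies the factorization of $\emptyset\to X$ into $n+1$ equivariant extensions $f_0,\dots,f_n$. You also check that $\emptyset\to X$ is a projective cofibration, since each $f_i$ is a pushout of a coproduct of generating cofibrations $r\odot(G/H\times S^{i-1})\to r\odot(G/H\times D^{i})$; the paper's one-line proof leaves this hypothesis implicit, and you handle it correctly.
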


\begin{proof}
   Let $T$ be an $n$-dimensional persistent $G$-CW-complex. So it comes from a filtration. Since $T$ is $n$-dimensional, it has $n$-dimensional $G$-cells of dimension $0,1, \dots, n$ and it is built by attaching those cells. Thus $T$ is an equivariant $n$-cofibration. 
   
\end{proof}

\begin{example}
    The geometric realization of $\VR(M)$ which is defined in Example \ref{Eg_n cofibrant} is equivariant $n$-cofibrant.
\end{example}

As we said earlier, the lifting in the equivariant persistent case exists up to a shift, in particular, ``a right shift". In the following, we make sense of the notion right shift. Recall the shift functor from \eqref{Eq_shift functor}.

\begin{definition}
    Let $\delta$ be a non-negative real number, and $i \colon A \to B$ and $p \colon X \to Y$ be morphisms in $G$-$\mathbb{S}^{\RR}$. Let there be $G$-maps $A \to X$ and $B \to Y$ such that the square on the left below commute. If for all such commutative diagrams and any subgroups $H \leq G$, there exist a diagonal map $B^H \to X(\delta)^H$ as in the right below diagram such that the diagram is commutative, we say $p$ has the \emph{right} $(G,\delta)$-\emph{lifting property} with respect to $i$.
        $$
        \xymatrix{
        A\ar[rr] \ar[d]_{i} && X\ar[d]^{p}  && A^H \ar[rr] \ar[d]_{i^H} & & X^H \ar[d]\ar[rr]^{} \ar[d]^{p^H} & & X(\delta)^H \ar[d]^{p(\delta)^H}\\
         B\ar[rr] && Y && B^H \ar[rr]_{} \ar@/^.5pc/@{-->}[urrrr] & & Y^H \ar[rr]_{} & & Y(\delta)^H.
        }
        $$   
\end{definition}

We now prove the main results of this section, Corollary \ref{cor:lift} and Theorem \ref{Th_interleaving in homotopy category}. The proof relies on the following lemma. Jardine \cite{Jar20} showed that fibrations inducing interleavings on homotopy groups satisfy a shifted right lifting property. 
The corresponding non-equivariant result was established by Goerss--Jardine in \cite[Theorem I.7.10]{GJ99}. In the equivariant setting, we apply analogous arguments on the fixed-point sets $X^H$, for each subgroup $H\leq G$, to obtain the required lifts. Since the relevant $G$-orbits are disjoint, the lifts on the individual fixed-point sets can be assembled to produce a global equivariant lift. The following lemma makes this argument precise. Recall from \cite{MS10} that a $G$-simplicial set $\K$ is said to be a $G$-Kan complex if $\K^H$ is a Kan complex for each subgroup $H$ of $G$.

\begin{lemma}
Let $p\colon \mathcal{K} \to \mathcal{L}$ be a $G$-fibration between two $G$-Kan Complexes $\K$ and $\cL$. This means $p\colon \K^H \to \cL^H$ is a Kan fibration for all subgroups $H$ of $G$. Now, for a subgroup $H$ of $G$, we consider the following commutative square of $G$-simplicial sets 
    \begin{equation}\label{Eq_lift in Kan}
        \xymatrix{
        G/H \times \partial \Delta^n \ar[rr]^{\alpha} \ar@{^{(}->} [d]_{} & & \K \ar[d]^{p}\\
        G/H \times \Delta^n \ar[rr]_{\beta} & & \cL  
        }
    \end{equation}
where $\alpha$ and $\beta$ are $G$-maps. Now if we replace $\alpha$ and $\beta$ by their homotopic maps as in the diagram left in \eqref{Eq_lift in Kan 2}, it becomes the diagram on the right in \eqref{Eq_lift in Kan 2}. If the lifting problem on the right has a solution, then so does the initial square in \eqref{Eq_lift in Kan}.
Moreover, if the lift exists corresponding to two different subgroups $ H$ and $ K$, then we combine them to get a lift from $G/H \sqcup G/K$.

    \begin{equation}\label{Eq_lift in Kan 2}
        \begin{tikzcd}[column sep=2.4em]
        G/H \times \partial \Delta^n \ar[ddd,hook] \ar[dr,"(\emph{id}_{\partial \Delta^n} \times \{1\}) "] \ar[drrr,"\alpha", bend left]&&\\
        & G/H \times \partial \Delta^n \times \Delta^1	\ar[d,hook]	\ar[rr," f"] & & \K\ar[d,"p"]   & G/H \times \partial \Delta^n \ar[rr,"f \circ (\emph{id}_{\partial \Delta^n} \times \{0\}) "] \ar[d,hook] & & \K \ar[d,"p"]\\
        & G/H \times \Delta^n \times \Delta^1 \ar[rr,"g"] & & \cL &   G/H \times \Delta^n \ar[rr,"g \circ( \emph{id}_{\Delta^n} \times \{0\})"] &&\cL\\
        G/H \times \Delta^n \ar[ur,"(\emph{id}_{\Delta^n} \times \{1\}) "{swap}] \ar[urrr,"\beta"{swap},bend right] &&
        \end{tikzcd}
    \end{equation}

\end{lemma}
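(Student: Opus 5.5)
The plan is to reduce the equivariant lifting problem to a lifting problem over the fixed-point simplicial sets, and then use the classical homotopy-invariance of lifting problems against Kan fibrations. The starting point is the adjunction
\[
\hom_{\GsS}(G/H\times A,\K)\cong \hom_{\sSet}(A,\K^H),
\]
which sends $\phi$ to $a\mapsto\phi(eH,a)$. Under it, the square \eqref{Eq_lift in Kan} corresponds to a square of ordinary simplicial sets
\[
\partial\Delta^n\to\K^H,\qquad \Delta^n\to\cL^H,
\]
with right vertical map $p^H$, which is a Kan fibration between Kan complexes. In the same way, the homotopies $f$ and $g$ in \eqref{Eq_lift in Kan 2} correspond to maps $f'\colon\partial\Delta^n\times\Delta^1\to\K^H$ and $g'\colon\Delta^n\times\Delta^1\to\cL^H$ with $p^H f'=g'|$. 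The right-hand lifting problem in \eqref{Eq_lift in Kan 2} becomes the square given by the restrictions of $f',g'$ at the endpoint $0$. So it suffices to prove the non-equivariant statement for $p^H$ and transport the resulting lift back through the adjunction; naturality of the adjunction makes the transported lift commute with $\alpha$ and $\beta$.

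For the non-equivariant step I would argue as in the proof of \cite[Lemma I.7.4]{GJ99}, which is also used in \cite{Jar20}. Let $\theta\colon\Delta^n\to\K^H$ solve the endpoint-$0$ problem. The inclusion
\[
(\partial\Delta^n\times\Delta^1)\cup(\Delta^n\times\{0\})\hookrightarrow\Delta^n\times\Delta^1
\]
is an anodyne extension. On its domain, take $f'$ on the first piece and $\theta$ on the second; these agree on the overlap precisely because $\theta$ is a lift. The square formed with $g'$ on the bottom commutes. Since $p^H$ is a Kan fibration, there is a lift $\Theta\colon\Delta^n\times\Delta^1\to\K^H$. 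Its restriction $\Theta|_{\Delta^n\times\{1\}}$ then restricts to $\alpha$ on $\partial\Delta^n$ and satisfies $p^H\circ\Theta|_{1}=\beta$, so it solves \eqref{Eq_lift in Kan}. The main technical point I expect is checking that the anodyne map and the homotopies are compatible with the adjunction, i.e.\ that $G/H\times(-)$ preserves these inclusions. This is routine, because $G/H\times(-)$ is a left adjoint and the $G$-action on $\Delta^n$ and $\Delta^1$ is trivial.

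For the final assertion, note that $G/H\sqcup G/K$ is a coproduct in $\GsS$ and that products with $\Delta^n$ and $\partial\Delta^n$ distribute over coproducts. A square with domain $(G/H\sqcup G/K)\times\partial\Delta^n$ therefore splits into two independent squares, one for each orbit. Given lifts $\ell_H$ and $\ell_K$ for these, the map $\ell_H\sqcup\ell_K$ induced by the universal property of the coproduct is a $G$-map, and it makes both triangles commute because it does so on each summand. The same argument extends to arbitrary disjoint unions of orbits, which is the form needed later for the equivariant $n$-dimensional extensions.
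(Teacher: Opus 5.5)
Your proposal is correct and follows the same route as the paper: you transport the square through the adjunction $\hom_{G}(G/H\times A,\K)\cong\hom(A,\K^H)$ and then use homotopy invariance of lifting problems against the Kan fibration $p^H$. The differences are that you prove that non-equivariant step directly, via the anodyne extension $(\partial\Delta^n\times\Delta^1)\cup(\Delta^n\times\{0\})\hookrightarrow\Delta^n\times\Delta^1$, where the paper only cites \cite[Theorem I.7.10]{GJ99}, and that you justify the coproduct assertion for $G/H\sqcup G/K$, which the paper's proof leaves unaddressed.
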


\begin{proof}
Recall the standard Quillen adjunction 
    $$\hom_{G}(G/H \times \K, \cL) \cong \hom(\K, \cL^H).$$
The diagram on the left of \eqref{Eq_lift in Kan 2} corresponds to the diagram on the left below. From \cite[Theorem I.7.10]{GJ99}, if the diagram on the right below has a lifting, then so does the left below one.
     $$\begin{tikzcd}[column sep=2.4em]
        \partial \Delta^n \ar[ddd,hook] \ar[dr,"(\emph{id}_{\partial \Delta^n} \times \{1\}) "] \ar[drrr,"\alpha' ", bend left]&&\\
        & \partial \Delta^n \times \Delta^1	\ar[d,hook]	\ar[rr," f' "] & & \K^H \ar[d,"p^H"]   & \partial \Delta^n \ar[rr,"f' "] \ar[d,hook] & & \K^H \ar[d,"p^H"]\\
        &\Delta^n \times \Delta^1 \ar[rr,"g' "] & & \cL^H &   \Delta^n \ar[rr,"g' "] &&\cL^H \\
        \Delta^n \ar[ur,"(\emph{id}_{\Delta^n} \times \{1\}) "{swap}] \ar[urrr,"\beta' "{swap},bend right] &&
        \end{tikzcd}$$
    The lifting of the right diagram above comes from the correspondence of the following two diagrams
        $$
        \xymatrix{
        G/H \times \partial \Delta^n \ar[rr]^{\hspace{1cm}f \circ (\emph{id}_{\partial \Delta^n} \times \{0\}) } \ar@{^{(}->} [d]_{} & & \K \ar[d]^{p} & & \partial \Delta^n \ar[rr]^{f'} \ar@{^{(}->} [d]_{} & & \K^H \ar[d]^{p^H}\\
        G/H \times \Delta^n \ar[rr]_{\hspace{1cm}g \circ (\emph{id}_{\partial \Delta^n} \times \{0\}) } & & \cL & & \Delta^n \ar[rr]_{g'} & & \cL^H.
        }
        $$
        
\end{proof}

\begin{lemma}\label{lem:lift}
    Let $\delta \geq 0$ be a real number and $p \colon X \to Y$ in $G$-$\mathbb{S}^{\mathbb{R}}$ induce a $(0,\delta)$-interleaving in homotopy groups. If $X$ and $Y$ are projective fibrant and $p$ is a projective fibration ( $p_r^H\colon X_r^H \to Y_r^H$ is a Kan fibration for all $H\leq G$ ), then $p$ has the right $2 \delta$-lifting property with respect to boundary inclusions 
        $$r \odot \bigsqcup \limits_{H \leq G}(G/H \times \partial \Delta^n) \to r \odot \bigsqcup \limits_{H \leq G}(G/H \times \Delta^n)$$ 
    for every $r \in \RR$, every $n \in \NN $ and for every  subgroup $H$ of $G$ .
\end{lemma}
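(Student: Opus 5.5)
Since $H$ is fixed in the lifting problem and the square for a disjoint union of orbit types splits into independent squares, it suffices to solve one generating square at a time:
$$r\odot(G/H\times\partial\Delta^n)\to X,\qquad r\odot(G/H\times\Delta^n)\to Y .$$
By the free-diagram adjunction of Subsection \ref{Diagram Category} together with $\hom_G(G/H\times K,L)\cong\hom(K,L^H)$, such a square is the same as an ordinary square
$$\alpha\colon \partial\Delta^n\to X_r^H,\qquad \beta\colon \Delta^n\to Y_r^H,\qquad p_r^H\alpha=\beta|_{\partial\Delta^n}.$$
A diagonal into $X(2\delta)$ likewise corresponds to a map $\Delta^n\to X_{r+2\delta}^H$ making both triangles commute after composing with the structure maps. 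Equivariant lifts obtained for the various summands $G/H$ are then reassembled over the disjoint union, as in the previous lemma.

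So the problem reduces to a non-equivariant statement about the persistent Kan fibration $p^H\colon X^H\to Y^H$ of fibrant persistent simplicial sets. By hypothesis (ii) for this $H$, $p^H$ induces $(0,\delta)$-interleavings on all $\pi_n$ and on $\pi_0$. I then follow Jardine's and Lanari--Scoccola's argument (\cite[Corollary 5.13]{LS23}, modelled on \cite[Theorem I.7.10]{GJ99}), run on the fixed points.

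First I use the previous lemma, applied in $\mathbb{S}^{\RR}$ to $p^H$, to homotope $(\alpha,\beta)$. Using that $p_r^H$ is a Kan fibration and $X_r^H$ is Kan, I deform $\beta$ rel nothing, with a compatible deformation of $\alpha$, so that $\beta$ becomes the constant map at a vertex $p(x)$ with $x$ the image of a vertex of $\alpha$. The lifting problem is then equivalent to the modified one.

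For $n=0$, I use surjectivity up to shift on $\pi_0$: the $\delta$-interleaving of $\pi_0$ gives a vertex of $X_{r+\delta}^H$ whose image is connected to $\beta$ in $Y_{r+\delta}^H$, and path lifting along the fibration corrects it.

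For $n\ge1$, $\alpha$ becomes a map $\partial\Delta^n\to F_x$, where $F_x$ is the fibre of $p_r^H$ over $p(x)$. By the long exact sequence of $p^H$ at each level, natural in $r$, the $(0,\delta)$-interleaving on homotopy groups implies that the map $[\alpha]\in\pi_{n-1}(F_x)$ becomes trivial after passing from $r$ to $r+2\delta$. One $\delta$ is used to show that its image in $\pi_{n-1}(X^H)$ is killed, since that image maps to zero in $\pi_{n-1}(Y^H)$. The second $\delta$ lifts the resulting element of $\pi_n(Y^H)$ back to $X^H$ to kill the boundary ambiguity. A nullhomotopy of $\alpha$ in $F_{x}$ at level $r+2\delta$ is exactly a filler $\Delta^n\to X^H_{r+2\delta}$ over the constant $\beta$.

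The main obstacle is this diagram chase with shifts: checking that exactness of the levelwise long exact sequences, combined with only $(0,\delta)$-interleavings, costs exactly $2\delta$ and no more, while also tracking basepoints in $\pi_0$ and $\pi_1$. Here I would cite the non-equivariant computation of \cite{LS23} verbatim, since after restricting to $H$-fixed points the situation is literally theirs. The remaining equivariant bookkeeping, namely that the diagonal built levelwise on fixed points comes from a $G$-map via the adjunction, is formal.
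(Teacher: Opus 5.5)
Your proposal is correct and follows the paper's route: you use the free-diagram and $\hom_G(G/H\times K,L)\cong\hom(K,L^H)$ adjunctions to reduce to a lifting square for the Kan fibration $p_r^H\colon X_r^H\to Y_r^H$, solve that square non-equivariantly with a $2\delta$ shift, and reassemble the lifts over the disjoint union of orbits. The only difference is that the paper simply invokes \cite[Lemma 13]{Jar20} for the fixed-point step, whereas you cite \cite{LS23} and sketch the long-exact-sequence chase that explains why the shift needed is $2\delta$.
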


\begin{proof}
    The commutative diagram on the left corresponds to the one on the right below, 
    $$
        \xymatrix{
        r \odot (G/H \times \partial \Delta^n) \ar[rr]^{a} \ar@{^{(}->} [d]_{} & & X \ar[d]^{p} & & G/H \times \partial \Delta^n \ar[rr]^{\alpha} \ar@{^{(}->} [d]_{} & & X_r \ar[d]^{p_r}\\
        r \odot (G/H \times \Delta^n) \ar[rr]_{b} & & Y & & G/H \times \Delta^n \ar[rr]_{\beta} & & Y_r
        }
    $$
    Now, the commutative diagram on the right-hand side above corresponds to the commutative diagram below 
    $$
        \xymatrix{
        \partial \Delta^n \ar[rr]^{\alpha'} \ar@{^{(}->} [d]_{} & & X_r^H \ar[d]^{p_r^H}\\
         \Delta^n \ar[rr]_{\beta'} & & Y_r^H . 
        }
    $$
    Since $X_r^H,Y_r^H$ are topological spaces and $p_r^H$ is a Kan fibration; using \cite[Lemma 13]{Jar20}, we get a $2\delta$ lift $\Delta^n\to X_{r+2\delta}^H$ for $p^H$  for all subgroups $H$ of $G$ as follows
     $$
        \xymatrix{
        \partial \Delta^n \ar[rr]^{\alpha'} \ar@{^{(}->} [d]_{} & & X_r^H \ar[d]\ar[rr]^{} \ar[d] & & X_{r+2\delta}^H \ar[d]^{p_{r+2\delta}^H}\\
         \Delta^n \ar[rr]_{\beta'} \ar@/^.5pc/@{-->}[urrrr] & & Y_r^H \ar[rr]_{} & & Y_{r+2\delta}^H.
        }
    $$
    So we get a lift from $G/H \times \Delta^n \to X_{r+2\delta}$ For all subgroup $H$ of $G$. Since $2\delta$ is a lift for $p_r^H$  for all $H \leq G$ and all orbits are disjoint, we get a lift from $\bigsqcup \limits_{H \leq G} G/H \times\Delta^n \to X_{r+2\delta}$ of $p$.
    
\end{proof}

\begin{corollary}\label{cor:lift}
    Let $\delta \geq 0$ and let $p \colon X \to_\delta Y$ induce a $\delta$-interleaving in homotopy groups. If $X$ and $Y$ are projective fibrant and $p$ is a projective fibration, then $p$ has the right $(4(n+1) \delta)$-lifting property with respect to equivariant $n$-cofibrations for all $n \in \NN$.
\end{corollary}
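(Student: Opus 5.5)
The plan is to reduce to Lemma \ref{lem:lift} and then induct along the $n+1$ extensions that make up an equivariant $n$-cofibration, following \cite[Corollary 5.13]{LS23}.

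\textbf{Step 1: convert $p$ into a $(0,2\delta)$-map.} By the bijection \eqref{Eq_epsilon_delta}, regard $p\colon X\to_\delta Y$ as an honest morphism $p'\colon \delta\cdot X\to Y$; equivalently, compose with shifts. Remark \ref{Rem_delta to 2delta} then shows that $p'$ induces a $(0,2\delta)$-interleaving on $\pi_0^H$ and on every $\pi_n^H$, for every $H\le G$. Right shifts preserve projective fibrancy and projective fibrations, since these are objectwise conditions on each $(-)^H$. So Lemma \ref{lem:lift}, applied with $2\delta$ in place of $\delta$, gives $p'$ the right $4\delta$-lifting property with respect to the generating maps $r\odot(G/H\times\partial D^m)\to r\odot(G/H\times D^m)$.

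\textbf{Step 2: pass from single cells to one equivariant extension.} An equivariant $m$-dimensional extension is a pushout of a coproduct of generating cofibrations. Suppose we are given a square from such an extension $h\colon A\to B$ to $p$. Solve each cell's lifting problem separately, landing in the $4\delta$-shift; the lifts are compatible because they are indexed by disjoint summands. The universal property of the pushout, applied after shifting $X$ by $4\delta$, then produces a diagonal $B\to X(4\delta)$. The upper triangle commutes on the nose. The lower triangle commutes after composing with $p(4\delta)$ and with the structure map $Y\to Y(4\delta)$.

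The subtle point is that the lift only solves the shifted square. For the next stage, the new top map is therefore the composite $B\to X(4\delta)$, and the bottom map is the shifted one.

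\textbf{Step 3: compose along the factorization.} Write the equivariant $n$-cofibration as $\iota=f_n\circ\cdots\circ f_0$, with each $f_i$ an extension. Proceed inductively. After handling $f_0,\dots,f_{k}$, we have a partial lift from the intermediate object to $X(4(k+1)\delta)$. This partial lift, together with the shifted bottom map, forms a new commutative square for $f_{k+1}$ against $p(4(k+1)\delta)$. That shifted map again satisfies the hypotheses, because shifting preserves interleavings and fibrancy. Each step costs another $4\delta$, so after $n+1$ steps the total shift is $4(n+1)\delta$. At every stage one must also check that the lifts are compatible with the fixed-point decomposition, i.e.\ that they are genuinely $G$-maps. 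This holds because the adjunction $\hom_G(G/H\times K,L)\cong\hom(K,L^H)$ was used cellwise in Lemma \ref{lem:lift}, and because the pushouts are taken in $G$-$\mathbb{S}^{\RR}$.

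\textbf{Main obstacle.} The main obstacle is the bookkeeping in Step 2: checking that the lower triangle commutes only up to the structure map, and that the resulting square still commutes once it is fed into the next extension. I would handle this exactly as \cite{LS23} does, by working throughout with morphisms $\epsilon\cdot A\to X$ via \eqref{Eq_epsilon_delta}. In that form the shifted squares compose strictly, and the only equivariant input needed is Lemma \ref{lem:lift}.
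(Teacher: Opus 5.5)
Your proposal is correct and follows the paper's own proof. You use Remark~\ref{Rem_delta to 2delta} to get a $(0,2\delta)$-interleaving, apply Lemma~\ref{lem:lift} for $4\delta$-lifts against the generating cells, pass to single equivariant extensions through the coproduct and pushout universal properties, and compose the $n+1$ extensions to reach $4(n+1)\delta$; your Steps 2--3 simply spell out the shifted-square bookkeeping that the paper leaves implicit.
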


\begin{proof}
    From our assumption and Remark \ref{Rem_delta to 2delta}, $p\colon X \to Y(\delta)$ induces $(0,2\delta)$-interleaving in all homotopy groups. An equivariant $n$-cofibration can be written as a composite of $n+1$ equivariant single extensions. 
    A single-dimensional extension is the pushout of a coproduct of
    $$
        \bigsqcup\limits_{i \in I} r_i \odot (\bigsqcup\limits_{j \in J}G/H_j \times \partial D^n) \to \bigsqcup\limits_{i \in I} r_i \odot (\bigsqcup\limits_{j \in J}G/H_j \times D^n)
    $$
    Now, using the universal property of coproducts and Lemma~\ref{lem:lift}, we can say that $f$ has right $4\delta$-lifting property with respect to coproducts of the above form. Hence, $f$ has right $4(n+1)\delta$-lifting property.
    
\end{proof}

\begin{theorem}\label{Th_interleaving in homotopy category}
    Let $X, Y \in G$-$\mathbb{S}^{\RR}$ be persistent spaces that are assumed to be projective cofibrant and pointwise $n$-skeletal if $G$-$\mathbb{S}={\GsS}$, or persistent $G$-$CW$-complexes of dimension at most $n$ if $G$-$\mathbb{S}={\GTop}$. 
    Let $\delta \geq 0$ be a real number. If there exists a map $f \colon X \to_{\delta} Y$ in $G$-$\mathbb{S}^{\RR}$ that induces $\delta$-interleavings in all homotopy groups, then $X$ and $Y$ are $(4(n+1)\delta)$-interleaved in the homotopy category ${\mbox{Ho}}(G$-$\mathbb{S}^{\RR})$. 
\end{theorem}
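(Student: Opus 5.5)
We follow the strategy of Lanari--Scoccola \cite{LS23} and work fixed-point-wise. First we reduce to fibrations between fibrant objects. Let $R$ denote a fibrant replacement functor in the projective model structure on $G$-$\mathbb{S}^{\RR}$. Shifting commutes with fibrant replacement up to weak equivalence, so we regard $f$ as a map $X \to Y(\delta)$ and replace it by a map $RX \to RY(\delta)$. We then factor this map as an acyclic cofibration $RX \xrightarrow{\simeq} X'$ followed by a projective fibration $p\colon X' \to RY(\delta)$.

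This replacement preserves the hypotheses. Objectwise, for every $H\leq G$, the maps $X_r^H \to X_r'^H$ are weak equivalences. Hence $p$ induces the same maps on $\pi_n^H$ as $f$ does, up to isomorphism. Therefore $p$, viewed as a $\delta$-morphism $X'\to_\delta RY$, still induces $\delta$-interleavings in all homotopy groups, for every $H\leq G$. In $\Ho(G$-$\mathbb{S}^{\RR})$ we have $X\cong X'$ and $Y\cong RY$. It therefore suffices to produce a $4(n+1)\delta$-interleaving in the homotopy category between $X'$ and $RY$, compatible with $p$.

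Next we construct the reverse morphism using Corollary~\ref{cor:lift}. By Proposition~\ref{Prop_G-Sset-n-cofibrant} in the simplicial case, or Proposition~\ref{Prop_G-CW-n-cofibrant} in the topological case, $Y$ is equivariant $n$-cofibrant. Set $\epsilon = 4(n+1)\delta$. We apply the shifted lifting property to the square whose left edge is $\emptyset \to Y$ and whose right edge is $p$, with bottom map the fibrant-replacement map $Y\to RY$. This yields a morphism $g\colon Y\to X'(\epsilon')$ for a suitable shift $\epsilon'\le\epsilon$, with $p\circ g$ equal to the structure map of $Y$ composed with $Y\to RY$. This gives one of the two interleaving triangles on the nose.

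For the other triangle we need $g\circ f$ to agree with the structure map of $X$ in the homotopy category. We apply the relative lifting property to the cofibration $X\otimes\partial\Delta^1 \to X\otimes\Delta^1$, where $X\otimes(-)$ denotes the objectwise cylinder. This is again an equivariant cofibration of bounded length: a cylinder on an $n$-dimensional object adds cells of one higher dimension. The lift produces a homotopy between $g\circ f$ and the structure map of $X$, up to the shift. The same bookkeeping as in \cite{LS23} shows that the total shift needed for both triangles is bounded by $4(n+1)\delta$.

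The main obstacle is controlling this cylinder/relative step without losing an extra factor. The cylinder $X\otimes\Delta^1$ relative to its ends has cells of dimension up to $n+1$, so a naive count gives $4(n+2)\delta$. To avoid this, we would follow \cite{LS23}: use a relative $n$-cofibration, namely attaching only the interior cells of the cylinder, and use the fact that $\pi_0$-interleaving handles the bottom layer. We must also check that the lifts built on each fixed-point set $X^H$ assemble equivariantly. This should follow as in Lemma~\ref{lem:lift}, because the orbit decomposition $G/H\times D^k$ is disjoint across cells.
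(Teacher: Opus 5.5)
Your outline is the paper's proof. You replace $f$ by a projective fibration $p$ between fibrant objects, and lift against $\emptyset\to Y$ via Corollary~\ref{cor:lift} to get the reverse map $g$ with $p\circ g$ a structure map. You then lift against the cylinder inclusion to make the other composite a structure map in $\Ho$. Where you invert $Y\to RY$ in $\Ho$, the paper extends the lift along the acyclic cofibration $j\colon Y\to Y'$; this difference is immaterial. Assembling lifts over fixed-point sets is handled by the adjunction in Lemma~\ref{lem:lift}, as you say.

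What needs correcting is your last paragraph. The ``main obstacle'' comes from a miscount, and the proposed fix via $\pi_0$ is not an argument. Corollary~\ref{cor:lift} charges $4\delta$ for \emph{every} single-dimension extension, $0$-dimensional ones included, so no layer is ever free.

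The constant is saved because $X\otimes\partial\Delta^1\to X\otimes\Delta^1$ has no new $0$-cells. Its new cells are the $e\times(0,1)$, one of dimension $k+1$ for each equivariant $k$-cell $e$ of $X$, with the same isotropy and birth index. In the simplicial case the new nondegenerate simplices likewise have dimensions $1,\dots,n+1$. Since $\partial(e\times\Delta^1)\subseteq X\otimes\partial\Delta^1\cup X^{(k-1)}\otimes\Delta^1$, these cells can be attached in $n+1$ stages by dimension. So the inclusion is already an equivariant $n$-cofibration, exactly as the paper claims. ``Attaching only the interior cells'' is therefore not a remedy; it is simply what the relative cylinder is.

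With that count the bookkeeping closes without loss:
\begin{enumerate}[(i)]
\item $g$ is a $(4n+3)\delta$-morphism, and $p\circ g$ is the $4(n+1)\delta$ structure map.
\item Let $i\colon X\to X'$ be the acyclic map. Then $g_\delta\circ f$ and $i$ followed by the $4(n+1)\delta$ structure map are two maps $X\to X'(4(n+1)\delta)$ whose images under the shifted $p$ are homotopic. They are even equal if $RX\to RY(\delta)$ is chosen compatibly with $f$.
\item The cylinder lift costs another $4(n+1)\delta$, so these two maps agree in $\Ho$ after pushing into $X'(8(n+1)\delta)$.
\item Post-composing $p$ and $g$ with structure maps makes both $4(n+1)\delta$-morphisms. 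Their composites are then the $8(n+1)\delta=2\cdot 4(n+1)\delta$ structure maps, which is a $4(n+1)\delta$-interleaving in $\Ho$.
\end{enumerate}
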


\begin{proof}
    From Proposition~\ref{Prop_G-Sset-n-cofibrant} and Proposition~\ref{Prop_G-CW-n-cofibrant}, we have that $X$ and $Y$ are equivariant $n$-cofibrant. Given that $f$ induces $\delta$-interleavings in all homotopy groups. We start with a convenient representative $p$ from $[f]\in \mbox{Ho}(G$-$\mathbb{S}^\RR)$ as follows.       
    From the model category axioms, we can get $p\colon X'\to Y'(\delta)$ a projective fibration between projective fibrant objects such that there exist trivial cofibrations $i\colon X \to X'$ and $j\colon Y \to Y'$ with $[p] \circ [i] = [j_\delta] \circ [f]$ in $\mbox{Ho}(G$-$\mathbb{S}^\RR)$, i.e.,
        $$
        \xymatrix{
        X \ar[rr]^{f} \ar[d]_{i} \ar@{-->}[drr]^{j_\delta \circ f} & & Y(\delta)\ar[d]^{j_\delta}\\
        X' \ar[rr]_{p} & & Y'(\delta).  
        }
        $$
    Since $p \in [f]$, it induces $(0,2\delta)$ interleaving in homotopy groups of $X'$ and $Y'(\delta)$. Also note that $Y(\delta)$ is equivariant $n$-cofibrant. Then by using Corollary \ref{cor:lift}, we can find a $4(n+1)\delta$-lift $g'$ of $p$ with respect to the $n$-cofibration $\emptyset \to Y$ as follows
        $$\xymatrix{
        \emptyset \ar[rr]^{} \ar[d]_{} & & X' \ar[d]^{p} \ar[rr] && X'(4(n+1)\delta)\\
        Y(\delta) \ar[rr]_{j_\delta} \ar@/^.6pc/@{-->}[urrrr]_{g'\hspace{4.5cm}} & & Y'(\delta).  
        }$$
    Now, since $j \colon Y \to Y'$ is trivial cofibration and $X'$ is fibrant, by the lifting property of a model category, we can get a lift $g\colon Y' \to X'({(4n+3)\delta})$ as follows
        $$
            \xymatrix{
            Y \ar[rr]^{g'} \ar@{^{(}->}[d]_{j}^{\simeq}  & & X'({(4n+3)\delta}) \ar@{->>}[d]^{}\\
            Y' \ar[rr] \ar@{-->}[rru]^{g}& & \mathbf{*} 
            }
        $$ 
    where $\mathbf{*}$ is the terminal object.
     Now, our claim is $$S_{\delta,4(n+1)\delta}(p)\colon X' \to_{4(n+1)\delta} Y' \quad  \text{and} \quad S_{(4n+3)\delta, 4(n+1)\delta}(g) \colon Y' \to _{4(n+1)\delta} X'$$ form a $4(n+1)\delta$-interleaving in the homotopy category between $X'$ and $Y'$, where $S_{(\epsilon,\delta)}$ denotes the structural maps $A_{\bullet} \to A_{\bullet+(\delta-\epsilon)}$ of the respective diagrams. For simplicity of notation we will also use $S_{\delta} = S_{(0,\delta)}$

    Observe from the construction that $p_{(4n+3)\delta} \circ g \circ j= p_{(4n+3)\delta} \circ g' = S_{4(n+1)\delta} (j)$, where the subscript of $p$ indicates the index of the domain and $S_{4(n+1)\delta}$ is a map $Y' \to Y'(4(n+1)\delta)$ getting by compsotion of structural maps. 
    Since $j$ become an isomorphism in homotopy category, it follows that $[p]_{(4n+3)\delta} \circ [g] = S_{4(n+1)\delta}([id_{Y'}])$. Hence,
        $$
        (S_{\delta,4(n+1)\delta}([p]))_{4(n+1)\delta} \circ S_{(4n+3)\delta, 4(n+1)\delta}([g])= S_{8(n+1)\delta}([id_{Y'}]).
        $$
    Now shifting the relation $[p]_{(4n+3)\delta} \circ [g] = S_{4(n+1)\delta}([id_{Y'}])$  by $\delta$ we get,
        \begin{myeq}\label{imm use}
            [p]_{(4n+1)\delta} \circ [g]_\delta = 
            ([p]_{(4n+3)\delta} \circ [g])_\delta = (S_{4(n+1)\delta}([id_{Y'}]))_\delta 
        \end{myeq}
    where $(S_{4(n+1)\delta}([id_{Y'}]))_\delta $ is the structural map $Y'(\delta) \to Y'({(4n+5)\delta}).$ Now, we compose \eqref{imm use} with $[p]\circ[i]\colon X\to Y'(\delta)$ and by the naturality of the structural map, we have,
        $$
        [p]_{4(n+1)\delta} \circ [g]_\delta \circ [p] \circ [i] = (S_{4(n+1)\delta}([id_{Y'}]))_\delta \circ [p] \circ[i] = [p]_{4(n+1)\delta} \circ S_{0,4(n+1)\delta}([i]).
        $$
    Hence $p_{(4n+1)\delta} \circ g_\delta \circ p \circ i \colon X \to Y'({(4n+5)\delta})$ is homotopic to $p_{4(n+1)\delta} \circ S_{0,4(n+1)\delta}(i)$.
    Let $H\colon I \times X \to Y'({(4n+5)\delta})$ be a homotopy between these maps, which gives the commutative diagram,
        $$
            \xymatrix@C=1.8cm{
            X \bigsqcup X \ar[rr]^{(S_{0,4(n+1)\delta}(i), g_\delta \circ p \circ i)} \ar[d]_{(\iota_0,\iota_1)}  & & X'({4(n+1)\delta}) \ar[d]^{p_{4(n+1)\delta}} \ar[rr] & & X'({8(n+1)\delta})\\
            I \times X \ar[rr]_{H} \ar@/^.3pc/@{-->}[rrrru]& & Y'({(4n+5)\delta})  
            }
        $$
    where $\iota_0, \iota_1$ are level-wise inclusions. We claim that the left vertical map, which is the inclusion into the cylinder, is an equivariant $n$-cofibration. Since $X$ is an equivariant $n$-cofibrant and a cell decomposition of the map is obtained by attaching an equivariant $d+1$-cell corresponding to each equivariant $d$-cell in the decomposition of $X$, the inclusion is an equivariant $n$-cofibration. Using Corollary~\ref{cor:lift}, we can find a $4(n+1)\delta$- lift of the diagram, which shows that,
    $$
    S_{4(n+1)\delta,8(n+1)\delta}( [g]_\delta \circ [p] \circ [i]) =S_{0,8(n+1)\delta}([i])\colon X \to X'({8(n+1)\delta})
    $$
    Now the left-hand side equals $ (S_{(4n+3)\delta,4(n+1)\delta}[g])_{4(n+1)\delta} \circ  S_{\delta,4(n+1)\delta}([p] \circ [i])$ and $[i]$ is an isomorphism, it follows that $(S_{(4n+3)\delta,4(n+1)\delta}[g])_{4(n+1)} \circ  S_{\delta,4(n+1)\delta}([p]) = S_{8(n+1)\delta}[id_{X'}]$.
    
\end{proof}


\section{Equivariant Persistent Nerve Lemma}\label{Sec_EPNL}

In this section, we prove the equivariant version of the persistent nerve lemma.  As a consequence, we prove an equivariant weak law of large numbers for filtrations.

Nerve lemma is a classical result in homotopy theory. The idea of a nerve has been dated back to Alexandroff \cite{Alex28}. If $\mathcal{U}=\{U_i\}_{i \in \Lambda}$ is a cover of topological space $T$, then the nerve of $\mathcal{U}$, denoted by $\nrv(\mathcal{U})$, is the simplicial complex whose simplices are the finite subsets $J \subset \Lambda$ such that the intersection $\bigcap_{i \in J} U_i$ is non-empty. The geometric realization of $\nrv(\mathcal{U})$ is denoted by $|\nrv(\mathcal{U})|$.
The nerve lemma says a paracompact space $T$ is homotopy equivalent to the $| \nrv(\mathcal{U})|$ where $\mathcal{U}$ is an open cover of $T$ such that every non-empty finite intersection of sets from $\mathcal{U}$ is contractible, see \cite[Corollary 4G.3]{Hat}. 
Such a cover $\mathcal{U}$ of $T$ is called a \emph{good cover}. The technique follows a well-known scheme in homotopy theory: by introducing an intermediate space and showing it is homotopy equivalence to both $T$ and $\nrv (\mathcal{U})$. 

After the first encounter with the persistent nerve lemma in Chazal--Oudot\cite{CO08}, many articles explored the notion. Finally, in the exposition \cite{BKRR23}, Bauer--Kerber--Roll--Rolle prove it along with several other versions useful for persistent setup. Using a good cover for diagrams (cf. \cite[Definition 6.7]{BL23}), the proof follows a ``unified" scheme of introducing an intermediate space as follows
    $$
        \xymatrix{
        X_t \ar@{^{(}->} [d]_{} & & Z_t \ar[ll]_{\simeq} \ar@{^{(}->}[d] \ar[rr]^{\simeq} \ar[d] & & | \nrv(\mathcal{U}_t) | \ar@{^{(}->}[d]\\
         X_{t'}   & & Z_{t'} \ar[ll]_{\simeq} \ar[rr]^{\simeq} & & | \nrv(\mathcal{U}_{t'}) |,
        }
    $$
where $t \leq t'$ in $\I$ for some small category $\I$ and $(X_t, \mathcal{U}_t)$ is a general filtration with $\mathcal{U}_t$ being a cover of $X_t$.

In the equivariant setup, one can have different versions; see \cite{HH13}, \cite{Yang14} and \cite{GG24} for reference. Similar to the non-equivariant case, we need to consider an ``equivariant good cover". 

\begin{definition}[\cite{GG24}]
    Let $G$ be a finite group and $T$ a paracompact $G$-space with locally finite $G$-invariant open covering $\mathcal{U} \coloneq\{U_i\}_{i \in \Lambda}$ (i.e. $G$ acts on $\Lambda$).
    This induces a simplicial $G$-action on $\nrv(\mathcal{U})$ which maps $U_{\sigma} \coloneq \bigcap_{\ell=0}^k U_{i_{\ell}}$ to $U_{g\sigma} \coloneq \bigcap_{\ell=0}^k U_{gi_{\ell}}$. We denote $\nrv(\mathcal{U})$ with $G$-action by $\Nrv(\mathcal{U})$.
    Let $G_{\sigma}$ be the stabilizer of $\sigma$. 
     If $U_{\sigma}$ is $G_{\sigma}$-contractible for every $\sigma \in \Nrv(\mathcal{U})$, then $\mathcal{U}$ is called an \emph{equivariant good cover} of $T$. (Recall that $U_{\sigma}$ is called $G_{\sigma}$-\emph{contractible} if it is $G_{\sigma}$-homotopy equivalent to a point.)
\end{definition}

In \cite[Theorem 4.6]{GG24}, the authors proved that for an equivariant good cover $\mathcal{U}$ of $T$, there is a $G$-homotopy equivalence $\Nrv(\mathcal{U}) \simeq_G T$ following a similar technique by introducing an intermediate space. This leads us to the equivariant persistent nerve lemma, which needs suitable intermediate spaces and its own ``good cover".

Let $D$ be a $\Delta$-complex on a space $T$ as described by Hatcher in \cite[Section 2.1]{Hat}. In our article, we consider regular $\Delta$-complexes in the sense of \cite[Definition 2.47]{Kozlov08}. Briefly, a \emph{regular $\Delta$-complex} is a $\Delta$-complex in which each simplex is attached by identifying each of its faces homeomorphically with a distinct simplex of lower dimension.
In particular, one can think of the $\Delta$-complexes as simplicial complexes, with the vertices of each simplex inheriting a canonical linear order. We write any $d$-simplex in $D$ by $\sigma^d \coloneq [v_0, v_1, \dots, v_d]$ where the vertices $v_i$ satisfy the ordering $v_0 < v_1 < \dots < v_d$.
For a finite group $G$, a $\Delta$-complex $D$ is called a $G$-$\Delta$-\emph{complex} if $D$ is equipped with a cellular action of $G$ such that, for every $g\in G$ and every simplex $\sigma$ of $D$, the image $g\cdot\sigma$ is a simplex of the same dimension, and the restriction $g|_\sigma:\sigma\longrightarrow g\cdot\sigma$ is an affine homeomorphism.

Let $\mathcal{C}$ be a diagram of spaces over a $\Delta$-complex $D$, i.e., a space $\mathcal{C}_v$ is assigned to every vertex $v \in D$ and a map $\mathcal{C}_{[u,v]} \colon \mathcal{C}_u \to \mathcal{C}_v$ is assigned to each oriented edge $[u,v]$ of $D$ such that $\C_{[u,w]}=\C_{[v,w]} \circ \C_{[u,v]}$ whenever $[v,w]$ and $[u,v]$ are directed edges of a common simplex $\sigma \in D$.
A diagram $\C$ is called $G$-diagram of spaces over the $G$-$\Delta$-complex $D$ if for every $g \in G$ and every vertex $v \in D$, there are maps $\C_{g,v} \colon \C_v \to \C_{g.v}$ satisfy:

\begin{enumerate}[(i)]
    \item For all vertex $v \in D$, $\C_{e,v}$ is the identity map where $e$ is the identity element of $G$.

    \item For all vertex $v \in D$ and $g,h \in G$, $\C_{h,g.v} \circ \C_{g,v}=\C_{hg,v}.$

    \item For any oriented edge $[u,v]$ and $g \in G$, $\C_{[gu,gv]} \circ \C_{g,v}=\C_{g,v} \circ \C_{[u,v]}$.
\end{enumerate}

Now we recall the colimit of $\C$, a diagram of spaces over a $\Delta$-complex $D$, as follows
    \begin{myeq}\label{Eq_colim C}
        \colim~ \C \coloneq \bigg( \bigsqcup_{v \in V(D)} \C_v \bigg) \bigg/ \sim,
    \end{myeq}
where $V(D)$ is the vertex set of $D$ and the equivalence relation is generated by $x \sim \C_{[u,v]}(x)$ for $x \in \C_u$ and an oriented edge $[u,v]$ of $D$. The homotopy colimit of $\C$ is defined by
    \begin{myeq}\label{Eq_hocolim C}
        \hocolim~\C \coloneq \bigg( \bigsqcup_{[v_0, \dots, v_d]} [v_0, \dots, v_d]  \times \C_{v_0}  \bigg) \bigg/ \sim,
    \end{myeq}
where the disjoint union is taken over all ordered simplicial complexes of $D$ and the equivalence relation is generated by the following:
\begin{enumerate}[(i)]
    \item $[v_0, \dots, v_d] \times \C_{v_0} \ni (\iota_0(\sigma), x) \sim (\sigma,\C_{[v_0,v_1]}(x)) \in [v_1, \dots, v_d] \times \C_{v_1}$,
    
    \item $[v_0, \dots, v_d] \times \C_{v_0} \ni (\iota_i(\sigma), x) \sim (\sigma,x) \in [v_0, \dots, \hat{v}_i, \dots, v_d] \times \C_{v_0}$ whenever $i>0$,
\end{enumerate}
\noindent with $i_j \colon [v_0, \dots, \hat{v}_i, \dots, v_k] \hookrightarrow [v_0, \dots, v_k]$ being general order-preserving face inclusions for $j=0,1, \dots, k$. It follows from the definitions that if $\C$ is a $G$-diagram of spaces over a $G$-$\Delta$-complex $D$, then $\colim~\C$ and $\hocolim~\C$ inherit $G$-actions.

Let $\C^1$ and $\C^2$ be two $G$-diagrams of spaces over a $G$-$\Delta$-complex $D$ and $\mathcal{F} \colon \C^1 \to \C^2$ a $G$-map of $G$-diagram of spaces, i.e. $\mathcal{F}$ satisfies the following commutative diagrams
    $$
        \xymatrix{
         \C^1_u \ar[rr]^{\mathcal{F}_u} \ar[d]_{} & & \C^2_u \ar[d]^{} & & C^1_v \ar[rr]^{\mathcal{F}_v} \ar[d]_{} & & C^2_v \ar[d]^{}\\
         \C^1_v \ar[rr]_{\mathcal{F}_v} & & C^2_v & & C^1_{g.v} \ar[rr]_{\mathcal{F}_{g,v}} & & \C^2_{g.v}
        }
    $$
for any $g \in G$ and any oriented edge $[u,v]$ of $D$. Note that $\colim~C^i$ and $\hocolim~C^i$ inherits $G$-action.  Moreover, $\colim ~F \colon \colim~ \C^1 \to \colim~ \C^2$ and $\hocolim ~F \colon \hocolim~ \C^1 \to \hocolim~ \C^2$ are $G$-maps.

\begin{example}\label{Eg_diagram of spaces}
    For a $G$-topological space $T$ and its equivariant open cover $\mathcal{U}=\{U_i\}_{i \in \Lambda}$, let $\Nrv(\mathcal{U})$ and $\sd(\Nrv(\mathcal{U}))$ be the nerve complex and barycentric subdivision of the nerve complex, respectively. Note that any simplex $\sigma \in \Nrv(\mathcal{U})$ represents a vertex in $\sd(\Nrv(\mathcal{U}))$ and the vertex ordering of simplices in $\sd(\Nrv(\mathcal{U}))$ is given by reverse set-inclusion. Then both $\Nrv(\mathcal{U})$ and $\sd(\Nrv(\mathcal{U}))$ are a $G$-$\Delta$-complexes.
    The diagram of $G$-spaces $\C(\mathcal{U})$ associated to $\mathcal{U}$ is a diagram of $G$-spaces over $\sd(\Nrv(\mathcal{U}))$ with 
        \begin{enumerate}[(i)]
            \item $\C(\mathcal{U})_{\sigma} \coloneq \bigcap_{i \in  \sigma} U_i$ for a vertex $\sigma$ of $\sd(\Nrv(\mathcal{U}))$,

            \item $\C(\mathcal{U})_{[\sigma,\tau]} \colon \C(\mathcal{U})_{\sigma} \hookrightarrow \C(\mathcal{U})_{\tau}$ is an inclusion for any oriented edge $[\sigma,\tau]$ of $\sd(\Nrv(\mathcal{U}))$.
        \end{enumerate}
    Thus $\C(\mathcal{U})$ becomes a $G$-diagram of spaces over $\sd(\Nrv(\mathcal{U}))$. Moreover, both $\colim~\C(\mathcal{U})=T$ and $\hocolim~\C(\mathcal{U})$ induces the $G$-action.
\end{example}

Now we define equivariant good cover of a diagram of $G$-equivariant spaces to prove the equivariant persistent nerve lemma.

\begin{definition}[Equivariant good cover of a diagram of $G$-equivariant spaces]\label{Defn_equiv good cover of diagram}
    Let $\I$ be a small category and $G$ a finite group. An \emph{equivariant cover of a diagram} $X\colon \I \to \GTop$ of equivariant topological spaces indexed by a set $\mathbf{S}$ with $G$-action is a collection of functors 
        $$\mathcal{U}=\{U^k\colon \I \to \Top\}_{k\in \mathbf{S}}$$ 
    such that for each $t\in \I$, the set $\mathcal{U}_t:=\{U^k_t\mid k \in \mathbf{S}\}$ is a cover $X_t$ and invariant under the action of $G$. We call an equivariant cover $\mathcal{U}$ is \emph{good} if following conditions are satisfied:
        \begin{itemize}
            \item each $X_t$ is a paracompact Hausdorff space 
             \item $\bigcap\limits_{i\in \sigma}{U^{k_i}_t}$ is $G_\sigma$- contractible for every simplex $\sigma$ of $\Nrv(\{U^k_t\mid k \in \mathbf{S}\})$ for each $t \in \I$.
        \end{itemize}    
\end{definition}

\begin{remark}\label{Rem_equiv good to good}
    Let $\mathcal{U}$ be an equivariant good cover of the diagram $X \colon \I \to \GTop$. If we forget the group action, then an equivariant good cover becomes a good cover in the sense of Definition 6.7 of \cite{BL23}. Notice that in that article, the authors used closed sets. But one can relax that condition, see \cite[Theorem 5.9]{BKRR23}.
\end{remark}

Now we prove the equivariant version of the persistent nerve lemma, for which we need the following lemma from \cite[Proposition 5.5]{GG24}.

\begin{lemma}[Equivariant Homotopy Lemma]
\label{Lem_Equiv Homotopy Lemma}
    Let $\mathcal{F} \colon \C^1 \to \C^2$ be a $G$-map between $G$-diagram of spaces $\C^1$ and $\C^2$ over a $G$-$\Delta$-complex $D$. If $\mathcal{F}_v \colon \C^1_v \to \C^2_v$ is a $G_v$-homotopy equivalence for each vertex $v$, then $\hocolim~\mathcal{F} \colon \hocolim~\C^1 \to \hocolim~\C^2$ is a $G$-homotopy equivalence.
\end{lemma}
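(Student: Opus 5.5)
The natural approach is to reduce to the non-equivariant homotopy lemma by passing to fixed points, and then apply the equivariant Whitehead theorem (\cite[Theorem 5.3]{Matumoto71}), as in the proof of Lemma \ref{Th_equiv QT}. The alternative is to build an explicit $G$-homotopy inverse cell by cell over the skeleta of $D$. I would try the fixed-point route first and keep the explicit construction as a fallback.

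\textbf{Step 1.} Identify fixed points of the homotopy colimit. Fix a subgroup $H\le G$. I would show that $(\hocolim~\C)^H$ is homeomorphic to $\hocolim$ of a diagram $\C^{(H)}$ over a $\Delta$-complex $D_H$. The complex $D_H$ should consist of the $H$-invariant simplices of $D$. For a regular $G$-$\Delta$-complex whose $G$-action is compatible with the vertex ordering, the action on an invariant simplex preserves the order of its vertices, hence fixes every vertex. Then $D^H$ is the subcomplex spanned by $H$-fixed vertices, and we may set $\C^{(H)}_v=\C_v^H$ for $v\in V(D)^H$. If vertices of an invariant simplex can be permuted, I would first pass to the barycentric subdivision, which does not change $\hocolim$ up to $G$-homotopy. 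In the application (Example \ref{Eg_diagram of spaces}) $D$ is already a barycentric subdivision ordered by inclusion, so this is automatic. With this in place, the formula \eqref{Eq_hocolim C} gives the identification: a point $(\sigma,x)$ in a cell $[v_0,\dots,v_d]\times\C_{v_0}$ is $H$-fixed exactly when $\sigma$ lies in an $H$-fixed face and $x\in\C_{v_0}^H$. Checking this against the equivalence relations (i) and (ii) is bookkeeping, and $\hocolim~\mathcal F$ restricts to $\hocolim~\mathcal F^{(H)}$.

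\textbf{Step 2.} Apply the classical homotopy lemma. For $v\in V(D)^H$ we have $H\le G_v$, so the $G_v$-homotopy equivalence $\mathcal F_v$ restricts to a homotopy equivalence $\mathcal F_v^H\colon (\C^1_v)^H\to(\C^2_v)^H$. The non-equivariant homotopy lemma for diagrams over $\Delta$-complexes (\cite[Proposition 4G.1]{Hat}, or the version in \cite{GG24}) then makes $(\hocolim~\mathcal F)^H$ a homotopy equivalence for every $H\le G$.

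\textbf{Step 3.} Upgrade to a $G$-homotopy equivalence. If the $\C^i_v$ have the $G_v$-homotopy type of $G_v$-CW complexes, then $\hocolim~\C^i$ has the $G$-homotopy type of a $G$-CW complex, being built from $G$-orbits of cells of $D$ times those spaces. The equivariant Whitehead theorem then upgrades the conclusion of Step 2 to a $G$-homotopy equivalence. Without that hypothesis, only a Bredon weak equivalence follows. This is already enough for the persistent nerve lemma (Theorem C), since there only weak equivalence in $\GTI$ is needed.

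The main obstacle is Step 1, namely the fixed-point identification. For the genuine $G$-homotopy statement I would instead use the explicit fallback: induct over skeleta, choosing homotopy inverses and homotopies on one representative vertex per $G$-orbit with $G_v$-equivariance. Then extend them to the orbit through the maps $\C_{g,v}$ (well defined exactly because they are $G_v$-equivariant), and glue across simplices using the homotopy extension property of the cells $\sigma\times\C_{v_0}$. This mirrors Hatcher's proof, transported orbitwise.
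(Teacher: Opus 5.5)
Your Steps 1 and 2 are sound, and Step 1 is in fact the easy part. The definition of a $G$-diagram presupposes that $G$ preserves vertex orderings: condition (iii) treats $[gu,gv]$ as an oriented edge, and the evident $G$-action on $\hocolim~\C$ would not exist otherwise. Hence every $H$-invariant simplex is fixed pointwise, $(\hocolim~\C)^H$ is the homotopy colimit of $v\mapsto \C_v^H$ over the subcomplex $D^H$, and \cite[Proposition 4G.1]{Hat} makes each $(\hocolim~\mathcal{F})^H$ a homotopy equivalence. Your barycentric-subdivision caveat is therefore unnecessary.

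The real obstacle is Step 3, and there the proposal falls short of the statement. The lemma asserts a genuine $G$-homotopy equivalence with no hypothesis on the spaces $\C^i_v$. A $G$-map that is a homotopy equivalence on every fixed-point set is a $G$-homotopy equivalence only under extra hypotheses, such as $G$-CW homotopy type. So your main route proves only that $\hocolim~\mathcal{F}$ is a fixed-pointwise homotopy equivalence (in particular a Bredon weak equivalence), or the full statement under an added hypothesis. Even with that hypothesis, ``built from $G$-orbits of cells of $D$ times those spaces'' is not enough. The edge maps $\C_{[u,v]}$ are not cellular, so you would need an equivariant form of the fact that gluing along cofibrations and passing to telescopes preserve CW homotopy type. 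You are right that the weaker conclusion is all Theorem \ref{Th_EPNL} uses, but it is not this lemma.

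The paper gives no proof of its own. It imports the statement from \cite[Proposition 5.5]{GG24}, the equivariant counterpart of \cite[Proposition 4G.1]{Hat}, and it is your fallback, not your main route, that reaches it. As written, though, the fallback misplaces the difficulty. Homotopy inverses of the $\mathcal{F}_v$ chosen vertex by vertex do not commute with the edge maps, and over higher simplices the correcting homotopies must be made coherent, so they do not simply ``glue''. What makes the argument work is the following.

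Write $\hocolim_{D^{(n)}}\C$ as the $G$-pushout of $\hocolim_{D^{(n-1)}}\C \leftarrow \coprod_\sigma G\times_{G_\sigma}(\partial\sigma\times\C_{v_0(\sigma)}) \to \coprod_\sigma G\times_{G_\sigma}(\sigma\times\C_{v_0(\sigma)})$, with $\sigma$ running over orbit representatives of $n$-simplices. Order preservation gives $G_\sigma\le G_{v_0(\sigma)}$. So $\mathcal{F}_{v_0}$ is a $G_\sigma$-homotopy equivalence, and its inductions to $G$ are $G$-homotopy equivalences on both the cells and their boundaries; this is where your remark about extending along $\C_{g,v}$ belongs. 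The equivariant gluing theorem for pushouts along closed $G$-cofibrations then carries $G$-homotopy equivalence from the $(n-1)$-skeleton to the $n$-skeleton. If $D$ is infinite-dimensional, you still need a telescope (or infinite-concatenation) argument, because the inverses produced at successive stages are not compatible and cannot just be unioned.

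With these ingredients the fallback is a complete proof and needs no Whitehead theorem at all. Your fixed-point route then remains a quick way to get the Bredon weak equivalence that the nerve lemma actually requires.
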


\begin{theorem}[Equivariant Persistent Nerve Lemma]\label{Th_EPNL}
    If $\mathcal{U}$ is an equivariant good cover of a diagram $X \in \GTI$, then $X$ and $\Nrv(\mathcal{U})$ are $G$-weakly equivalent.
\end{theorem}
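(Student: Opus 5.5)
We follow the three-term scheme recalled at the start of this section, now carried out naturally in $t\in\I$ and equivariantly. For each $t\in\I$, Example~\ref{Eg_diagram of spaces} applied to the equivariant cover $\mathcal{U}_t$ of $X_t$ gives a $G$-diagram of spaces $\C(\mathcal{U}_t)$ over the $G$-$\Delta$-complex $\sd(\Nrv(\mathcal{U}_t))$. We take the intermediate object to be $Z_t\coloneq\hocolim~\C(\mathcal{U}_t)$, defined by \eqref{Eq_hocolim C}; it carries the induced $G$-action. The goal is a zigzag of natural transformations
$$X \xleftarrow{\ \rho\ } Z \xrightarrow{\ \pi\ } |\Nrv(\mathcal{U})|$$
in $\GTI$, with both maps objectwise $G$-weak equivalences, i.e.\ weak equivalences on all $H$-fixed points. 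This is exactly a $G$-weak equivalence in the sense used in Section~3.

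\textbf{Functoriality.} For a morphism $t\to t'$ in $\I$, the functors $U^k$ give inclusions $U^k_t\subseteq U^k_{t'}$. Hence a simplex of $\Nrv(\mathcal{U}_t)$ is a simplex of $\Nrv(\mathcal{U}_{t'})$, which gives a $G$-simplicial inclusion of nerves and of their barycentric subdivisions. We also get $G$-maps of diagrams $\C(\mathcal{U}_t)\to\C(\mathcal{U}_{t'})$ over that inclusion, because $G$ acts on the index set $\mathbf{S}$ compatibly for all $t$. Since \eqref{Eq_colim C} and \eqref{Eq_hocolim C} are built from the simplices and the vertex spaces, these maps induce $G$-maps $Z_t\to Z_{t'}$. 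The same holds for the realizations of the nerves and for $X_t=\colim~\C(\mathcal{U}_t)\to X_{t'}$, and functoriality in $\I$ is immediate.

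\textbf{The two maps.} $\rho_t\colon\hocolim~\C(\mathcal{U}_t)\to\colim~\C(\mathcal{U}_t)=X_t$ is the canonical projection that forgets the simplex coordinate. $\pi_t$ is the map induced by the map of $G$-diagrams $\C(\mathcal{U}_t)\to *$ to the constant one-point diagram. Its target is $\hocolim~* = |\sd(\Nrv(\mathcal{U}_t))|\cong|\Nrv(\mathcal{U}_t)|$, and this homeomorphism is $G$-equivariant and natural in $t$. Both $\rho$ and $\pi$ are visibly natural in $t$ and $G$-equivariant, since they are defined by the same formulas at every level.

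\textbf{$\pi_t$ is a $G$-homotopy equivalence.} Goodness gives that for each vertex $\sigma$ of $\sd(\Nrv(\mathcal{U}_t))$, the space $\C(\mathcal{U}_t)_\sigma=\bigcap_{i\in\sigma}U^i_t$ is $G_\sigma$-contractible. So each component $\C(\mathcal{U}_t)_\sigma\to *$ is a $G_\sigma$-homotopy equivalence. The stabilizer of $\sigma$ as a vertex of the subdivision equals its stabilizer as a simplex of the nerve, so Lemma~\ref{Lem_Equiv Homotopy Lemma} applies directly and shows $\pi_t$ is a $G$-homotopy equivalence, hence a $G$-weak equivalence.

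\textbf{$\rho_t$ is a $G$-homotopy equivalence (main obstacle).} Here we use paracompactness of $X_t$. I would first try to reproduce the argument of \cite[Theorem 4.6]{GG24}, which treats a single space with an equivariant cover. Choose a partition of unity subordinate to $\mathcal{U}_t$ and average it over the finite group $G$; the average is still subordinate because the cover is $G$-invariant. This gives a $G$-equivariant section $X_t\to Z_t$ of $\rho_t$. The straight-line homotopy in the simplex coordinate, which is $G$-equivariant because $G$ acts affinely on simplices, shows the composite in the other order is $G$-homotopic to the identity. Local finiteness of the cover may need to be arranged or assumed, as in \cite{GG24}. Alternatively, one can pass to fixed points: for each $H\le G$, $\rho_t^H$ is the non-equivariant projection for the cover $\{U^k_t\cap X_t^H\}$ of $X_t^H$, which is a homotopy equivalence by the classical argument, and this gives the weak-equivalence statement directly.

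Note that the sections need not be natural in $t$; only the projections $\rho_t$ must be, and they are. Hence $X\simeq_G Z\simeq_G|\Nrv(\mathcal{U})|$ in $\GTI$, which proves Theorem~\ref{Th_EPNL}.
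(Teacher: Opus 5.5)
Your proposal is correct and follows the paper's proof: the same zigzag $X\leftarrow \hocolim~\C(\mathcal{U})\rightarrow|\Nrv(\mathcal{U})|$, Lemma~\ref{Lem_Equiv Homotopy Lemma} applied to the map to the one-point diagram to handle the nerve projection, and naturality in $t$; your $G$-averaged partition of unity also supplies the justification, which the paper only asserts, that $\rho_t$ is a $G$-homotopy equivalence. One small correction to your fixed-point alternative: $(\hocolim~\C(\mathcal{U}_t))^H$ only sees $H$-fixed points of the nerve, so it is the homotopy colimit for the cover of $X_t^H$ by the sets $X_t^H\cap\bigcap_{h\in H}U^{hk}_t$ indexed by $\mathbf{S}/H$, not the one for $\{U^k_t\cap X^H_t\}_{k\in\mathbf{S}}$, and the classical argument should be applied to that cover.
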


\begin{proof}
    Let $\mathcal{U}_t$ be a $G$-invariant cover of the $G$-space $X_t$ for all $t \in \I$ as defined in Definition \ref{Defn_equiv good cover of diagram}. Consider the $G$-diagram of spaces $\C(\mathcal{U}_t)$ associated to $\mathcal{U}_t$ of $X_t$ as discussed in Example \ref{Eg_diagram of spaces}.
    Then we can define $\colim~\C(\mathcal{U}_t)$ and $\hocolim~\C(\mathcal{U}_t)$ as in \eqref{Eq_colim C} and \eqref{Eq_hocolim C}, respectively. Moreover, the structural map 
        \begin{myeq}\label{Eq_fibre projec}
            \rho_{S_t} \colon \hocolim~\C(\mathcal{U}_t) \longrightarrow \colim~\C(\mathcal{U}_t)=X_t
        \end{myeq}
    is a $G$-homotopy equivalence. The map $\rho_{S_t}$ is often called a fibre projection or space projection at level $t \in \I$. 
    
    Now let $D(X_t)$ be the $G$-$\Delta$-complex on the $G$-space $X_t$ and $\Tilde{\C}(\mathcal{U}_t)$ the $G$-diagram of spaces over over $D(X_t)$ such that each $\Tilde{\C}(\mathcal{U}_t)_v$ is a one-point space for all vertex $v$ in $D(X_t)$. 
    Note that $\hocolim~\tilde{\C}(\mathcal{U}_t)$ is compatible with the $G$-action. Consider the map
        \begin{myeq}\label{Eq_base projec}
            \rho_{N_t} \colon \hocolim~\C(\mathcal{U}_t) \longrightarrow \hocolim~\tilde{\C}(\mathcal{U}_t) =| \Nrv (\mathcal{U}_t) |,
        \end{myeq}
    known as base projection or nerve projection map at level $t \in \I$. Now we show $\rho_{N_t}$ is $G$-homotopy equivalence using Lemma \ref{Lem_Equiv Homotopy Lemma}.
    
    Recall from Example \ref{Eg_diagram of spaces} that for any vertex $\sigma$ in $\sd(\Nrv(\mathcal{U}_t))$, we have $\C(\mathcal{U}_t)_{\sigma}=\bigcap_{i \in \sigma} U_t^{k_i}$. Moreover from the construction of $\tilde{\C}(\mathcal{U}_t)$, $\tilde{\C}(\mathcal{U}_t)_{v}$ is a single point for any vertex $v$ in $D(X_t)$. Thus,
        $$\mathcal{F} \colon \C(\mathcal{U}_t) \to \tilde{\C}(\mathcal{U}_t) \quad \text{ is defined by } \bigcap_{i \in \sigma} U_t^{k_i} \xmapsto{\mathcal{F}_{\sigma}} \{*\}.$$
    From the hypothesis of an equivariant good cover of the diagram, $\bigcap_{i \in \sigma} U_t^{k_i}$ is $G_{\sigma}$-contractible. Thus, $\mathcal{F}_{\sigma}$ is $G_{\sigma}$-homotopy equivalence and consequently by Lemma \ref{Lem_Equiv Homotopy Lemma}, $\hocolim ~\mathcal{F}=\rho_{N_t}$ is a $G$-homotopy equivalence. So, for any $t \in I$,
        \begin{myeq}
            X_t=\colim ~\C(\mathcal{U}_t) \xleftarrow{~\rho_{S_t}~} \hocolim ~\C(\mathcal{U}_t) \xrightarrow{~\rho_{N_t}~} \hocolim~\tilde{\C}(\mathcal{U}_t)=|\Nrv(\mathcal{U}_t)|.
        \end{myeq}

    To complete the proof it remains to verify that the maps $\rho_{S_t}$ and $\rho_{N_t}$ assemble to natural transformations. Let $s\le t$ in $\I$. Since $\mathcal U$ is an equivariant good cover of the diagram $X$, the structure map $X(s)\longrightarrow X(t)$ induces a $G$-map $\mathcal C(\mathcal U_s)\longrightarrow \mathcal C(\mathcal U_t)$ between the associated $G$-diagrams of spaces. By the functoriality of the homotopy colimit and the colimit constructions, we obtain induced $G$-maps $\hocolim~ \mathcal C(\mathcal U_s)\longrightarrow \hocolim~ \mathcal C(\mathcal U_t)$ and $\colim~ \mathcal C(\mathcal U_s)\longrightarrow \colim~ \mathcal C(\mathcal U_t)$. Similarly, the induced maps $\hocolim~ \widetilde{\mathcal C}(\mathcal U_s) \longrightarrow \hocolim~ \widetilde{\mathcal C}(\mathcal U_t)$ agree with the simplicial maps $|\Nrv(\mathcal U_s)| \longrightarrow |\Nrv(\mathcal U_t)|$.
    
    The naturality of both the fibre projection and base projection maps imply that the following  diagram
        \begin{myeq}\label{Eq_commutative G-diag}
            \xymatrix{
            X_s \ar[d]_{} & & \hocolim ~\mathcal C(\mathcal U_s) \ar[ll]_{\rho_{S_s}} \ar[d] \ar[rr]^{\rho_{N_s}} \ar[d] & & | \Nrv(\mathcal{U}_s) | \ar[d]\\
             X_{t}   & & \hocolim ~\mathcal C(\mathcal U_t) \ar[ll]_{\rho_{S_{t}}} \ar[rr]^{\rho_{N_{t}}} & & | \Nrv(\mathcal{U}_{t}) |
            }
        \end{myeq}    
    commutes. Therefore the collections $\{\rho_{S_t}\}_{t\in\I}$ and $\{\rho_{N_t}\}_{t\in\I}$ define natural transformations of $\I$-diagrams. Since each component is a $G$-homotopy equivalence, hence a $G$-weak equivalence, we obtain a zig-zag diagram of natural transformations
        $$X \xleftarrow{\ \rho_S\ } \hocolim ~\mathcal C(\mathcal U) \xrightarrow{\ \rho_N\ } |\Nrv(\mathcal U)|$$
    whose components are $G$-weak equivalences. Consequently, $X$ and $\mathbf{Nrv}(\mathcal U)$ are $G$-weakly equivalent as diagrams in
    $\GTI$.
    
\end{proof}

\begin{mysubsection}{A weak law of large numbers for equivariant filtrations}
As an application of our axioms for equivariant distance on diagrams of G-spaces, we prove a weak law of large numbers for equivariant \v{C}ech filtrations. In \cite{BL23}, Blumberg-Lesnick proved a non-equivariant version of the weak law of large numbers for filtrations. As a special case of our setting, when the action is trivial, our result reduces to their result.

Let $M$ be a compact Riemannian manifold of dimension $m$ and $G$ be a finite group acting on $M$ by isometries. Then by an equivariant version of Nash's Theorem from \cite{MS80}, we can assert that there is an orthogonal representation of $G$ on some Euclidean space $\Rn$ and an isometric embedding from $M$ into $\Rn$ which is equivariant with respect to the representation. Using the above embedding, we can assume $M \subset \Rn $ is an $m$-dimensional compact Riemannian manifold. Consider $d_{\Rn}$ be the Euclidean distance on $\Rn$. We consider $M$ to be a probability space endowed with the normalised $m$-dimensional Hausdorff measure $\mu$. Since $G$ acts by isometries on $M$, the measure $\mu$ is $G$-invariant. Let $P_r = \{ p_1, p_2, \cdots, p_r\}$ be an i.i.d sample of $r$-points of $M$. In general the sample need not be invariant under the action of $G$. We therefore, take the orbit
    $$\Orb(P_r)=\{g.p_i \mid g\in G ,1\leq i\leq r \}\subseteq M $$
which is a $G$-invariant subset of $M$. Without loss of generality, we consider $M$ to be an infinite set, so that the orbit space cannot become the entire manifold $M$.

For a fixed $p \in [1, \infty]$, $x \in \RR^n$ and $r \in \RR$, the closed $\ell^p$-ball of radius $r$ centered at $x$ is denoted by $B(x,t)=\{y \in \RR^n ~\mid~ ||x-y||_p \leq t\}$. For $A \subset \RR^n$ with isometric $G$-action, the \emph{offset filtration} $O(A) \colon [0,\infty) \to \GTop$ is defined by 
    $$O(A)_t \coloneq \bigcup_{x \in A} B(x,t)$$
where the $G$-action on $O(A)_t$ is induced by the $G$-action on $A$.
Notice that for a compact $A$, $O(A)=\mathscr{S}(d_A)$, where $\mathscr{S}$ is sublevel set filtration and $d_A \colon \RR^n \to \RR$ is the equivariant distance function to $A$ where $\RR^n$ is equipped with induced $G$-action from that of $A$ and $G$ acts trivially on $\RR$. The set of offset filtrations $\mathcal{U} \coloneq \{O(\{x\}) ~|~ x \in A\}$ is an equivariant cover of $O(A)$. We call the nerve $\Nrv(\mathcal{U})$ the \emph{\v{C}ech filtration} of $A$ and denote it by $\check{C}(A)$.
If $A$ is finite, then $\mathcal{U}$ is an equivariant good cover of $O(A)$. Also for finite $A$, by Theorem \ref{Th_EPNL}, we have $\check{C}(A) \simeq_{G} O(A)$.

\begin{proposition}\label{Prop_weak law}
    Let $d^G$ be any stable and homotopy invariant distance on $\GTR$. Then, with notations as above, $d^G(\mathcal{\check{C}}(\text{Orb}(P_r)),O(M))$ converges in probability to $0$ as $r\to\infty$.
\end{proposition}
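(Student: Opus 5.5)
The plan mirrors the non-equivariant argument of Blumberg--Lesnick, with the equivariant nerve lemma and $G$-stability replacing their counterparts. Write $A_r \coloneq \Orb(P_r)$. It is a finite, $G$-invariant subset of $M \subset \RR^n$.

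\emph{Step 1: replace the \v{C}ech filtration by the offset filtration.} Since $A_r$ is finite, the cover $\{O(\{x\})\}_{x\in A_r}$ is an equivariant good cover of $O(A_r)$. Theorem \ref{Th_EPNL} then gives $\check{C}(A_r)\simeq_G O(A_r)$. By $G$-homotopy invariant of $d^G$ and the triangle inequality,
$$d^G(\check{C}(A_r),O(M)) \le d^G(O(A_r),O(M)).$$
(I extend these $[0,\infty)$-indexed filtrations to $\RR$ by the empty space in negative degrees, so that they live in $\GTR$.)

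\emph{Step 2: reduce to a Hausdorff-distance estimate via $G$-stability.} Both offset filtrations are sublevel filtrations on the same $G$-space $\RR^n$: $O(A_r)=\mathscr{S}(d_{A_r})$ and $O(M)=\mathscr{S}(d_M)$. The distance functions are $G$-invariant, because $A_r$ and $M$ are $G$-invariant and $G$ acts on $\RR^n$ by an orthogonal representation, hence by $\ell^p$-isometries. For $p\neq 2$ this last point needs care, and I would restrict to $p=2$ or to permutation-type representations if necessary. $G$-stability then gives
$$d^G(O(A_r),O(M))\le d^G_\infty(d_{A_r},d_M)=d_H(A_r,M).$$
Since $P_r\subseteq A_r\subseteq M$, we have $d_H(A_r,M)\le d_H(P_r,M)$. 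It therefore suffices to show that $d_H(P_r,M)\to 0$ in probability.

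\emph{Step 3: the probabilistic estimate.} This is the step I expect to carry the real content; the difficulty is only in making the bound explicit. Fix $\epsilon>0$. By compactness, cover $M$ by finitely many balls $B(y_1,\epsilon/2),\dots,B(y_k,\epsilon/2)$ with $y_j\in M$. Each intersection $M\cap B(y_j,\epsilon/2)$ is relatively open and nonempty, so it has positive measure $c_j$, because the normalized Hausdorff measure of a compact Riemannian manifold charges every nonempty open set. Let $c=\min_j c_j>0$. If every ball contains a sample point, then $d_H(P_r,M)\le\epsilon$. Hence
$$\Pr\big(d_H(P_r,M)>\epsilon\big)\le \sum_{j=1}^k (1-c_j)^r\le k(1-c)^r\to 0$$
as $r\to\infty$. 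Combining this with Steps 1 and 2 gives $\Pr\big(d^G(\check{C}(A_r),O(M))>\epsilon\big)\to 0$, which is the claim.
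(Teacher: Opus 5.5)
Your proposal is correct and follows essentially the same route as the paper: the equivariant persistent nerve lemma together with $G$-homotopy invariance to replace $\check{C}(\Orb(P_r))$ by $O(\Orb(P_r))$, then $G$-stability applied to the distance functions on $\RR^n$, then a union bound over a finite cover of $M$ by $\epsilon/2$-balls, giving a bound of the form $s(1-\ell)^r$. The only real difference is that you use $P_r\subseteq \Orb(P_r)\subseteq M$ to get $d_H(\Orb(P_r),M)\le d_H(P_r,M)$, which makes the paper's construction of a $G$-invariant finite subcover unnecessary (its bound on $\mathbb{P}(E^i_r)$ never actually uses that invariance). Your caveat that an orthogonal representation acts by $\ell^p$-isometries only for $p=2$, or for special representations, is also a legitimate point that the paper passes over.
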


\begin{proof}
    Let $\epsilon>0$. We will show that
        $$\underset{r\to\infty}{lim} \hspace{0.1cm} \mathbb{P}(d^G(\mathcal{\check{C}}(\Orb(P_r)),O(M))>\epsilon)=0.$$
    Consider, $\mathcal{U}_G=\{U_\alpha\}_{\alpha\in \Lambda}$ a collection of open balls of radius $\epsilon/2$ which is invariant under the action of $G$ and also cover $M$. Since $M$ is compact there exists a finite sub-cover $U_1,U_2 \dots,U_s$ of $M$. This sub-cover need not be $G$-invariant. Let
    $$\mathcal{U'} \coloneq \{g.U_i \mid g\in G,; 1\le i\le s,\}.$$
    Since the action of $G$ is by isometries, each $g.U_i$ is again a ball of the same radius. The collection $\mathcal{U}'_G$ is $G$-invariant and still covers $M$. Moreover, since both $G$ and $\{U_1,\ldots,U_s\}$ are finite and also $\mathcal{U}_G$ is $G$-invariant, $\mathcal{U}'_G$ is a finite $G$-invariant sub-cover of $M$. Hence, for any $G$-invariant cover, there exists a finite $G$-invariant sub-cover. Without loss of generality, we can take $\{U_1,\ldots,U_s\}$ to be a sub-cover invariant under the action of $G$. Consider 
        $$ \ell=\text{min}\{ \mu(U_1),\mu(U_2),\dots,\mu(U_s) \}.$$
    Note that $\ell>0$. Let $E^i_r$ be the event $\Orb(P_r) \cap U_i=\emptyset$ and let
        $$E_r=\bigcup_{i=1}^{s}E^i_r.$$
    In the complement of the event $E_r$, we have $d^G_{\text{H}}(\Orb(P_r),M)\leq \epsilon$, where $d^G_{\text{H}}$ is the equivariant Hausdorff distance. 
    
    For any nonempty subset $P\subset \Rn$, consider $d_P\colon \Rn \to \RR $ defined as, $d_P(x)= d_{\Rn}(x,P)$ is the distance function to $P$. Since the action on $\Rn$ is an isometric action, two maps $d^G_{\Orb(Pr)}(x)=d_{\Rn}(x,\Orb(P_r)) \quad \text{and} \quad d^G_M(x)=d_{\Rn}(x,M)$ are both $G$-maps. Since $d^G_H(\Orb(P_r),M)\leq \epsilon$, we have $\|d^G_{\Orb(Pr)}-d^G_M\|_\infty \leq \epsilon$.
    By assumption $d$ is stable, Hence
        $$d^G(O(\text{Orb}(P_r)),O(M)) \leq \|d^G_{\Orb(Pr)}-d^G_M\|_\infty \leq \epsilon$$
    The equivariant persistent nerve theorem assert that, $\mathcal{\check{C}}(\text{Orb}(P_r))\simeq_G O(\text{Orb}(P_r))$. Using the homotopy invariance of $d^G$ and previous inequality, we have
        $$d^G(\mathcal{\check{C}}(\Orb(P_r)),O(M))=d^G(O(\text{Orb}(P_r)),O(M))\leq \epsilon.$$
    Thus,
        $$\mathbb{P}(d^G(\mathcal{\check{C}}(\Orb(P_r)),O(M))>\epsilon) \leq \mathbb{P}(E_r) \leq \sum_{i=1}^s \mathbb{P}(E^i_r)\leq s(1-\ell)^r.$$
    Taking $r\to \infty$ we get,
        $$\underset{r\to \infty}{lim} \hspace{0.1cm} \mathbb{P}(d^G(\mathcal{\check{C}}(\Orb(P_r)),O(M))>\epsilon) \leq \underset{r\to \infty}{lim} \hspace{0.1cm} s(1-\ell)^r =0.$$   
\end{proof}
\end{mysubsection}
\subsection*{Acknowledgements}
The first author thanks IIT Kanpur for its PhD fellowship. The second author thanks IIT Kanpur for its postdoctoral fellowship.


\begin{thebibliography}{99}

\bibitem{Alex28}
Paul Alexandroff, \emph{\"uber den allgemeinen {D}imensionsbegriff und seine
  {B}eziehungen zur elementaren geometrischen {A}nschauung}, Math. Ann.
  \textbf{98} (1928), no.~1, 617--635. \MR{1512423}

\bibitem{ALMSS24}
Henry Adams, Evgeniya Lagoda, Michael Moy, Nikola Sadovek, and Aditya~De Saha,
  \emph{Persistent equivariant cohomology}, 2024.

\bibitem{BBI01}
Dmitri Burago, Yuri Burago, and Sergei Ivanov, \emph{A course in metric
  geometry}, Graduate Studies in Mathematics, vol.~33, American Mathematical
  Society, Providence, RI, 2001. \MR{1835418}

\bibitem{BFGQ19}
Mattia~G. Bergomi, Patrizio Frosini, Daniela Giorgi, and Nicola Quercioli,
  \emph{Towards a topological–geometrical theory of group equivariant
  non-expansive operators for data analysis and machine learning}, Nature
  Machine Intelligence \textbf{1} (2019), 423–433.

\bibitem{BKRR23}
Ulrich Bauer, Michael Kerber, Fabian Roll, and Alexander Rolle, \emph{A unified
  view on the functorial nerve theorem and its variations}, Expo. Math.
  \textbf{41} (2023), no.~4, Paper No. 125503, 52. \MR{4652781}

\bibitem{BL23}
Andrew~J. Blumberg and Michael Lesnick, \emph{Universality of the homotopy
  interleaving distance}, Trans. Amer. Math. Soc. \textbf{376} (2023), no.~12,
  8269--8307. \MR{4669297}

\bibitem{Carlsson09}
Gunnar Carlsson, \emph{Topology and data}, Bull. Amer. Math. Soc. (N.S.)
  \textbf{46} (2009), no.~2, 255--308. \MR{2476414}


\bibitem{CCGGO09}
Frédéric Chazal, David Cohen-Steiner, Marc Glisse, Leonidas J. Guibas, Steve Oudot, \emph{Proximity of Persistence Modules and their Diagrams}, [Research Report] RR-6568, INRIA. 2008, pp.29. ⟨inria-00292566v4⟩



\bibitem{CSGO16}
Fr\'ed\'eric Chazal, Vin de~Silva, Marc Glisse, and Steve Oudot, \emph{The
  structure and stability of persistence modules}, SpringerBriefs in
  Mathematics, Springer, [Cham], 2016. \MR{3524869}

\bibitem{CIdeSZ08}
Gunnar Carlsson, Tigran Ishkhanov, Vin de~Silva, and Afra Zomorodian, \emph{On
  the local behavior of spaces of natural images}, Int. J. Comput. Vis.
  \textbf{76} (2008), no.~1, 1--12. \MR{3715451}

\bibitem{CO08}
Fr\'ed\'eric Chazal and Steve~Y. Oudot, \emph{Towards persistence-based
  reconstruction in {E}uclidean spaces}, Computational geometry ({SCG}'08),
  ACM, New York, 2008, pp.~232--241. \MR{2504289}

\bibitem{CEH07}
David Cohen-Steiner, Herbert Edelsbrunner, and John Harer, \emph{Stability of
  persistence diagrams}, Discrete Comput. Geom. \textbf{37} (2007), no.~1,
  103--120. \MR{2279866}

\bibitem{CSZ10}
Gunnar Carlsson, Gurjeet Singh, and Afra Zomorodian, \emph{Computing
  multidimensional persistence}, J. Comput. Geom. \textbf{1} (2010), no.~1,
  72--100. \MR{2770959}

\bibitem{DWJS95}
W.G. Dwyer and J.~Spalinski, \emph{Chapter 2 - homotopy theories and model
  categories}, Handbook of Algebraic Topology (I.M. JAMES, ed.), North-Holland,
  Amsterdam, 1995, pp.~73--126.

\bibitem{ELZ02}
Herbert Edelsbrunner, David Letscher, and Afra Zomorodian, \emph{Topological
  persistence and simplification}, vol.~28, 2002, Discrete and computational
  geometry and graph drawing (Columbia, SC, 2001), pp.~511--533. \MR{1949898}

\bibitem{Forsini15}
Patrizio Frosini, \emph{{$G$}-invariant persistent homology}, Math. Methods
  Appl. Sci. \textbf{38} (2015), no.~6, 1190--1199. \MR{3338143}

\bibitem{GG24}
Emilio~J. Gonz\'alez and Jes\'us Gonz\'alez, \emph{Equivariant nerve lemma,
  simplicial difference, and models for configuration spaces on simplicial
  complexes}, Topology Appl. \textbf{341} (2024), Paper No. 108749, 16.
  \MR{4655735}

\bibitem{GJ99}
Paul~G. Goerss and John~F. Jardine, \emph{Simplicial homotopy theory}, Progress
  in Mathematics, vol. 174, Birkh\"auser Verlag, Basel, 1999. \MR{1711612}

\bibitem{Hat}
Allen Hatcher, \emph{Algebraic topology}, Cambridge University Press,
  Cambridge, 2002. \MR{MR1867354 (2002k:55001)}

\bibitem{HH13}
Daniel Hess and Benjamin Hirsch, \emph{On the topology of weakly and strongly
  separated set complexes}, Topology Appl. \textbf{160} (2013), no.~2,
  328--336. \MR{3003329}

\bibitem{Hir03}
Philip~S. Hirschhorn, \emph{Model categories and their localizations},
  Mathematical Surveys and Monographs, vol.~99, American Mathematical Society,
  Providence, RI, 2003. \MR{1944041}

\bibitem{Hovey}
Mark Hovey, \emph{Model categories}, Mathematical Surveys and Monographs,
  vol.~63, American Mathematical Society, Providence, RI, 1999. \MR{1650134}

\bibitem{Jar20}
J.~F. Jardine, \emph{Persistent homotopy theory}, 2020.

\bibitem{Kozlov08}
Dmitry Kozlov, \emph{Combinatorial algebraic topology}, Algorithms and
  Computation in Mathematics, vol.~21, Springer, Berlin, 2008. \MR{2361455}

\bibitem{Lesnick15}
Michael Lesnick, \emph{The theory of the interleaving distance on
  multidimensional persistence modules}, Found. Comput. Math. \textbf{15}
  (2015), no.~3, 613--650. \MR{3348168}

\bibitem{LM26}
Sunhyuk Lim and Facundo Memoli, \emph{The g-gromov-hausdorff distance and
  equivariant topology}, 2026.

\bibitem{LMSM86}
L.~G. Lewis, Jr., J.~P. May, M.~Steinberger, and J.~E. McClure,
  \emph{Equivariant stable homotopy theory}, Lecture Notes in Mathematics, vol.
  1213, Springer-Verlag, Berlin, 1986, With contributions by J. E. McClure.
  \MR{866482}

\bibitem{LS23}
Edoardo Lanari and Luis Scoccola, \emph{Rectification of interleavings and a
  persistent {W}hitehead theorem}, Algebr. Geom. Topol. \textbf{23} (2023),
  no.~2, 803--832. \MR{4587317}

\bibitem{Matumoto71}
Takao Matumoto, \emph{On {$G$}-{${\rm CW}$} complexes and a theorem of {J}.
  {H}. {C}. {W}hitehead}, J. Fac. Sci. Univ. Tokyo Sect. IA Math. \textbf{18}
  (1971), 363--374. \MR{345103}

\bibitem{May96}
J.~P. May, \emph{Equivariant homotopy and cohomology theory}, CBMS Regional
  Conference Series in Mathematics, vol.~91, Conference Board of the
  Mathematical Sciences, Washington, DC; by the American Mathematical Society,
  Providence, RI, 1996, With contributions by M. Cole, G. Comeza\~na, S.
  Costenoble, A. D. Elmendorf, J. P. C. Greenlees, L. G. Lewis, Jr., R. J.
  Piacenza, G. Triantafillou, and S. Waner. \MR{1413302}

\bibitem{MKKWP19}
G.~Muszynski, K.~Kashinath, V.~Kurlin, M.~Wehner, and Prabhat,
  \emph{Topological data analysis and machine learning for recognizing
  atmospheric river patterns in large climate datasets}, Geoscientific Model
  Development \textbf{12} (2019), no.~2, 613--628.

\bibitem{MS80}
John~Douglas Moore and Roger Schlafly, \emph{On equivariant isometric
  embeddings}, Math. Z. \textbf{173} (1980), no.~2, 119--133. \MR{583381}

\bibitem{MS10}
Goutam Mukherjee and Debasis Sen, \emph{Equivariant simplicial cohomology with
  local coefficients and its classification}, Topology Appl. \textbf{157}
  (2010), no.~6, 1015--1032. \MR{2593715}

\bibitem{Quillen67}
Daniel~G. Quillen, \emph{Homotopical algebra}, Lecture Notes in Mathematics,
  vol. No. 43, Springer-Verlag, Berlin-New York, 1967. \MR{223432}

\bibitem{Quillen72}
Daniel Quillen, \emph{Higher algebraic {$K$}-theory. {I}}, Algebraic
  {$K$}-theory, {I}: {H}igher {$K$}-theories ({P}roc. {C}onf., {B}attelle
  {M}emorial {I}nst., {S}eattle, {W}ash., 1972), Lecture Notes in Math., vol.
  Vol. 341, Springer, Berlin-New York, 1973, pp.~85--147. \MR{338129}

\bibitem{ER14}
Emily Riehl, \emph{Categorical homotopy theory}, New Mathematical Monographs,
  vol.~24, Cambridge University Press, Cambridge, 2014. \MR{3221774}

\bibitem{SN09}
Neil~P. Strickland, \emph{The category of cgwh spaces}, 2009.

\bibitem{Yang14}
Haibo Yang, \emph{Equivariant cohomology and sheaves}, J. Algebra \textbf{412}
  (2014), 230--254. \MR{3215956}

\end{thebibliography}

\end{document}